\documentclass[11pt]{amsart}
\usepackage{amssymb}
\usepackage{graphicx}
\usepackage{enumerate}
\usepackage{mathtools}
\usepackage{xcolor,soul}
\usepackage{cite}
\usepackage{tikz,tikz-cd}
\usetikzlibrary{matrix}

\usepackage[breaklinks=true]{hyperref}\hypersetup{colorlinks,
  linkcolor={green!50!black},
  citecolor={green!50!black},
  urlcolor=blue
}

\usepackage{caption}

\usepackage{amsmath, amsthm}
 \usepackage{relsize}

\definecolor{lgray}{gray}{0.82}
\definecolor{rredd}{rgb}{120,0,0}

\usepackage[margin=1.2in]{geometry}

\newtheorem{thm}{Theorem}[section]
\newtheorem{prop}[thm]{Proposition}
\newtheorem{lem}[thm]{Lemma}
\newtheorem{cor}[thm]{Corollary}

\newcommand{\theoremname}{Theorem:}

\usepackage{bbm}

\newcommand{\R}{\mathbb{R}}

\newcommand{\g}{\mathfrak{g}}
\newcommand{\wTFe}{\widetilde{\mathit w\!T\!F}}

\newcommand{\calA}{\mathcal{A}^{sw}}
\newcommand{\up}{\Upsilon}
\newcommand{\id}{\operatorname{id}}
\newcommand{\D}{\mathcal{D}}

\def\arXiv#1{{\href{http://front.math.ucdavis.edu/#1}{arXiv:\linebreak[0]#1}}}

%---------------------------------commands for crossing diagrams-------------------------

\newcommand{\rcross}{
\begin{tikzpicture}[scale=.2]
\draw[thick, ->](1,-1)--(-1.1,1.1);
\draw[ ultra thick](-1,-1)--(-.2,-.2);
\draw[ultra thick] (.2,.2)--(.95,.95);
\draw[ thick,->] (.2,.2)--(1.1,1.1);
\draw[line width= 2.3 mm, white](.4,-.4)--(-.4,.4);
\draw[ultra thick](1,-1)--(-.95,.95);
\end{tikzpicture}}

\newcommand{\mrcross}{
\begin{tikzpicture}[scale=.2]

\draw[thick, ->](1,-1)--(-1.1,1.1);
\draw[red](-1,-1)--(-.2,-.2);
\draw[red,->] (.2,.2)--(.95,.95);

\draw[line width= 2.3 mm, white](.4,-.4)--(-.4,.4);
\draw[ultra thick](1,-1)--(-.95,.95);
\end{tikzpicture}}

\newcommand{\lcross}{
\begin{tikzpicture}[scale=.2]

\draw[ thick,->](-1,-1)--(1.1,1.1);
\draw[ultra thick](1,-1)--(.2,-.2);
\draw[ultra thick] (-.2,.2)--(-.95,.95);
\draw[ thick,->] (-.2,.2)--(-1.1,1.1);

\draw[line width= 2.3 mm, white](-.4,-.4)--(.4,.4);
\draw[ ultra thick](-1,-1)--(.95,.95);
\end{tikzpicture}}

\newcommand{\mlcross}{
\begin{tikzpicture}[scale=.2]

\draw[ thick,->](-1,-1)--(1.1,1.1);
\draw[red](1,-1)--(.2,-.2);
\draw[red,->] (-.2,.2)--(-1,1);

\draw[line width= 2.3 mm, white](-.4,-.4)--(.4,.4);
\draw[ultra thick,](-1,-1)--(.95,.95);
\end{tikzpicture}}

\newcommand{\capoff}{
\begin{tikzpicture}[scale=.2]
\draw[ultra thick](0,-1)--(0,1);
\filldraw[fill=black, draw=black] (0,1) circle (3mm) ;
\end{tikzpicture}}

\newcommand{\sphere}{
\begin{tikzpicture}[scale=.1]
\draw[](0,-.5)--(0,1);
\filldraw[fill=black, draw=black] (0,-.5) circle (2mm) ;
\filldraw[fill=black, draw=black] (0,1) circle (2mm) ;
\end{tikzpicture}}

\newcommand{\calAt}{
\calA ( \hspace{.05cm}\begin{tikzpicture}[scale=.2, baseline=.0mm  ]
\draw[](0,-.5)--(0,1);
\filldraw[fill=black, draw=black] (0,-.5) circle (2mm) ;
\filldraw[fill=black, draw=black] (0,1) circle (2mm) ;
%\filldraw[fill=black, draw=black] (.2,.2) circle (.2mm) ;
%\filldraw[fill=black, draw=black] (.4,.2) circle (.2mm) ;
%\filldraw[fill=black, draw=black] (.6,.2) circle (.2mm) ;
\node[right] at (-.4,1) {\footnotesize $k_1$}; \end{tikzpicture}  %\begin{tikzpicture}[scale=.2,baseline=.0mm]
%\draw[](0,-.5)--(0,1);
%\filldraw[fill=black, draw=black] (0,-.5) circle (2mm) ;
%\filldraw[fill=black, draw=black] (0,1) circle (2mm) ;
%\end{tikzpicture}\hspace{1mm}
%\begin{tikzpicture}[scale=.2,baseline=.0mm]
%\draw[red,->](0,-.5)--(0,1.2);
%\filldraw[fill=black, draw=black] (.2,.2) circle (.2mm) ;
%\filldraw[fill=black, draw=black] (.4,.2) circle (.2mm) ;
%\filldraw[fill=black, draw=black] (.6,.2) circle (.2mm) ;
%\end{tikzpicture}
 \hspace{-.1cm} \begin{tikzpicture}[scale=.2,baseline=.0mm]
\draw[red,->](.5,-.5)--(.5,1.2);
\node[right] at (.15,1) {\footnotesize $k_2$};\end{tikzpicture}\hspace{-.1cm})
}
\newcommand{\calAtSph}{
\calA ( \hspace{.05cm}\begin{tikzpicture}[scale=.2, baseline=.0mm  ]
\draw[](0,-.5)--(0,1);
\filldraw[fill=black, draw=black] (0,-.5) circle (2mm) ;
\filldraw[fill=black, draw=black] (0,1) circle (2mm) ;
%\filldraw[fill=black, draw=black] (.2,.2) circle (.2mm) ;
%\filldraw[fill=black, draw=black] (.4,.2) circle (.2mm) ;
%\filldraw[fill=black, draw=black] (.6,.2) circle (.2mm) ;
\node[right] at (-.4,1) {\footnotesize $1$}; \end{tikzpicture}  %\begin{tikzpicture}[scale=.2,baseline=.0mm]
%\draw[](0,-.5)--(0,1);
%\filldraw[fill=black, draw=black] (0,-.5) circle (2mm) ;
%\filldraw[fill=black, draw=black] (0,1) circle (2mm) ;
%\end{tikzpicture}\hspace{1mm}
%\begin{tikzpicture}[scale=.2,baseline=.0mm]
%\draw[red,->](0,-.5)--(0,1.2);
%\filldraw[fill=black, draw=black] (.2,.2) circle (.2mm) ;
%\filldraw[fill=black, draw=black] (.4,.2) circle (.2mm) ;
%\filldraw[fill=black, draw=black] (.6,.2) circle (.2mm) ;
%\end{tikzpicture}
 \hspace{-.1cm} \begin{tikzpicture}[scale=.2,baseline=.0mm]
\draw[red,->](.5,-.5)--(.5,1.2);
\node[right] at (.35,1) {\footnotesize $0$};\end{tikzpicture}\hspace{-.1cm})
}

\newcommand{\calAtFour}{
\calA ( \hspace{.05cm}\begin{tikzpicture}[scale=.2, baseline=.0mm  ]
\draw[](0,-.5)--(0,1);
\filldraw[fill=black, draw=black] (0,-.5) circle (2mm) ;
\filldraw[fill=black, draw=black] (0,1) circle (2mm) ;
%\filldraw[fill=black, draw=black] (.2,.2) circle (.2mm) ;
%\filldraw[fill=black, draw=black] (.4,.2) circle (.2mm) ;
%\filldraw[fill=black, draw=black] (.6,.2) circle (.2mm) ;
\node[right] at (-.4,1) {\footnotesize $2$}; \end{tikzpicture}  %\begin{tikzpicture}[scale=.2,baseline=.0mm]
%\draw[](0,-.5)--(0,1);
%\filldraw[fill=black, draw=black] (0,-.5) circle (2mm) ;
%\filldraw[fill=black, draw=black] (0,1) circle (2mm) ;
%\end{tikzpicture}\hspace{1mm}
%\begin{tikzpicture}[scale=.2,baseline=.0mm]
%\draw[red,->](0,-.5)--(0,1.2);
%\filldraw[fill=black, draw=black] (.2,.2) circle (.2mm) ;
%\filldraw[fill=black, draw=black] (.4,.2) circle (.2mm) ;
%\filldraw[fill=black, draw=black] (.6,.2) circle (.2mm) ;
%\end{tikzpicture}
 \hspace{-.1cm} \begin{tikzpicture}[scale=.2,baseline=.0mm]
\draw[red,->](.5,-.5)--(.5,1.2);
\node[right] at (.25,1) {\footnotesize $2$};\end{tikzpicture}\hspace{-.1cm})
}

\newcommand{\calAone}{
\calA ( \begin{tikzpicture}[scale=.2, baseline=.0mm  ]
\draw[](0,-.5)--(0,1);
\filldraw[fill=black, draw=black] (0,-.5) circle (2mm) ;
\filldraw[fill=black, draw=black] (0,1) circle (2mm) ;
\draw[red,->](.75,-.5)--(.75,1.2);
\end{tikzpicture})
}

\newcommand{\calAtwo}{
\calA ( \begin{tikzpicture}[scale=.2, baseline=.0mm  ]
\draw[ultra thick,](0,-.5)--(0,1.2);
\draw[ultra thick,](1,-.5)--(1,1.2);
\draw[thick,->](0,-.5)--(0,1.3);
\draw[thick,->](1,-.5)--(1,1.3);
\end{tikzpicture})
}

\newcommand{\mlambda}{
\begin{tikzpicture}[scale=.2]
\draw[red](-.09,-.09)--(-1,-1);
%\draw[ultra thick,](0,-.08)--(0,.95);
\draw[ thick,->](0,-.08)--(0,1.1);
\draw[ultra thick](1, -1)--(0,0)--(0,.93);

\end{tikzpicture}}

\newcommand{\bulambda}{
\begin{tikzpicture}[scale=.2,baseline=-.1cm]
%\draw[ultra thick](0,.08)--(0,-1);
\draw[ultra thick,](0,-1)--(0,0)--(-.95, .95);
\draw[thick,->](0,0)--(-1.1, 1.1);
\draw[ultra thick, ](.5,.5)--(.95,.95);
\draw[ thick, ->](.5,.5)--(1.1,1.1);
\end{tikzpicture}}

\newcommand{\bdlambda}{
\begin{tikzpicture}[scale=.2,baseline=-.1cm]
%\draw[ultra thick](0,-.09)--(0,1);
\draw[thick, ->](0,-.09)--(0,1.1);
\draw[ultra thick](1, -1)--(.5,-.5);
\draw[ultra thick](0,.9)--(0,0)--(-1,-1);
\end{tikzpicture}}

\newcommand{\rlambda}{
\begin{tikzpicture}[scale=.2]
\draw[red,->](0,0)--(0,1);
\draw[red](1, -1)--(0,0);
\draw[red](0,0)--(-1,-1);
\end{tikzpicture}}

%mixed color lambda shape with tubes

%

\newcommand{\threadedsphere}{
\begin{tikzpicture}[scale=.2]
\draw[thick](0,-1)--(0,1);
\draw[red](-1,0)--(-.4,0);
\draw[red] (.4,0)--(1,0);
\filldraw[fill=black, draw=black] (0,1) circle (2mm) ;
\filldraw[fill=black, draw=black] (0,-1) circle (2mm) ;
\end{tikzpicture}
}

%lambda shape with tubes

%threaded shpere with tubes
\newcommand{\threadedspheretube}{
\begin{tikzpicture}[scale=.2]
\draw[] (0,0) circle (1cm);
\begin{scope}
    \clip (-1,0) rectangle (1,1);
\draw[ dotted] (0,0) ellipse (1cm and .3cm);
\end{scope}
\begin{scope}
    \clip (-.64,0) rectangle (1,-1);
    \draw[] (0,0) ellipse (1cm and .3cm);
\end{scope}
\begin{scope}
    \clip (-1,1) rectangle (1,-1);
    \draw[] (-.8,0) ellipse (.2cm and .45cm);
\end{scope}

\draw [white,fill=white]  (-1.05,.15) rectangle (-.97,-.15);
\draw[red, ] (-2,0)--(-.63,0)(-.57,0)--(.7,0)(1.2,0)--(2,0);
 \end{tikzpicture}
 }

%-------------------------threaded eq_--------------------------------------

 %------------end threaded eq-------------------

 %-----------------double threaded sphere--------------

   %-----------------double threaded sphere--------------

  %-------------------deltasphere----------------------------------------------------------------------------
    \newcommand{\deltasphere}{
  %\begin{tikzpicture}[scale=.15,, baseline=-1.3mm]
  \node[] at (0,0) {\huge$\Delta$};

  \begin{scope}[scale=.4,xshift=3.5cm, yshift=-2cm]
   \draw[] (0,0) circle (1cm);
   %-----------draw full dashed ellipse
\begin{scope}
    \clip (-1,0) rectangle (1,1);
\draw[ dashed] (0,0) ellipse (1cm and .3cm);
\end{scope}
%-------------- Draw lower half of ellipse
\begin{scope}
    \clip (-.64,0) rectangle (1,-1);
    \draw[] (0,0) ellipse (1cm and .3cm);
\end{scope}
\begin{scope}
    \clip (-1,1) rectangle (1,-1);
    \draw[] (-.8,0) ellipse (.2cm and .45cm);
\end{scope}

\end{scope}

%\end{tikzpicture}
  }
    %-------------------end deltasphere----------------------------------------------------------------------------

      %-------------------inlinedeltasphere----------------------------------------------------------------------------
    \newcommand{\inlinedeltasphere}{
  %\begin{tikzpicture}[scale=.15,, baseline=-1.3mm]
  \node[] at (0,0) {\Large$\Delta$};

  \begin{scope}[scale=.3,xshift=3.5cm, yshift=-2cm]
   \draw[] (0,0) circle (1cm);
   %-----------draw full dashed ellipse
\begin{scope}
    \clip (-1,0) rectangle (1,1);
\draw[ dashed] (0,0) ellipse (1cm and .3cm);
\end{scope}
%-------------- Draw lower half of ellipse
\begin{scope}
    \clip (-.64,0) rectangle (1,-1);
    \draw[] (0,0) ellipse (1cm and .3cm);
\end{scope}
\begin{scope}
    \clip (-1,1) rectangle (1,-1);
    \draw[] (-.8,0) ellipse (.2cm and .45cm);
\end{scope}

\end{scope}

%\end{tikzpicture}
  }
    %-------------------end inlinedeltasphere----------------------------------------------------------------------------

  \newcommand{\pictureone}{
  \begin{tikzpicture}[baseline=-1.3mm]
  \draw[]  (0,0) ellipse (.1cm and .25cm);
  \draw[white, line width =2.3 mm] (-.25,0)--(0,0);
    \draw[] (-.25,0)--(0.05,.0)(.15,0)--(.3,0);
    \node[] at(.55,0) {\#};

    \begin{scope}[xshift=1.05cm]
      \draw[]  (0,0) ellipse (.1cm and .25cm);
  \draw[white, line width =2.3 mm] (-.25,0)--(0,0);
    \draw[] (-.25,0)--(0.05,.0)(.15,0)--(.3,0);
     \node[] at(.65,0) {=};
\end{scope}

 \begin{scope}[xshift=2.3cm]
   \draw[]  (0,0) ellipse (.08cm and .23cm);
          \draw[]  (0,0) ellipse (.15cm and .3cm);
  \draw[white, line width =2.3 mm] (-.25,0)--(0,0);
    \draw[] (-.25,0)--(0.05,.0)(.2,0)--(.35,0);
     \node[] at(.7,0) {=};
\end{scope}

 \begin{scope}[xshift=4cm]
 \node[] at (-.55,0) {\Large $\Delta$};
  \draw[]  (-.27,-.22) ellipse (.04 cm and .09cm);
 \draw[]  (0,0) ellipse (.1cm and .25cm);
  \draw[white, line width =2.3 mm] (-.25,0)--(0,0);
    \draw[] (-.25,0)--(0.05,.0)(.15,0)--(.3,0);

\end{scope}\end{tikzpicture}}

  %--------------------------------------------------picture two--------------------------------------------
   \newcommand{\picturetwo}{
   \begin{tikzpicture}[baseline=-1.3mm, scale=.25]
\draw[] (0,0) circle (1cm);
\begin{scope}
    \clip (-1,0) rectangle (1,1);
\draw[ dotted] (0,0) ellipse (1cm and .3cm);
\end{scope}
\begin{scope}
    \clip (-.64,0) rectangle (1,-1);
    \draw[] (0,0) ellipse (1cm and .3cm);
\end{scope}
\begin{scope}
    \clip (-1,1) rectangle (1,-1);
    \draw[] (-.8,0) ellipse (.2cm and .45cm);
\end{scope}

\draw [white,fill=white]  (-1.05,.15) rectangle (-.97,-.15);
\draw[red,  ] (-2,0)--(-.63,0)(-.57,0)--(.7,0)(1.2,0)--(2,0);

\node[] at (3,0){\large $\#$};

\begin{scope}[xshift=6cm]
\draw[] (0,0) circle (1cm);
\begin{scope}
    \clip (-1,0) rectangle (1,1);
\draw[ dotted] (0,0) ellipse (1cm and .3cm);
\end{scope}
\begin{scope}
    \clip (-.64,0) rectangle (1,-1);
    \draw[] (0,0) ellipse (1cm and .3cm);
\end{scope}
\begin{scope}
    \clip (-1,1) rectangle (1,-1);
    \draw[] (-.8,0) ellipse (.2cm and .45cm);
\end{scope}

\draw [white,fill=white]  (-1.05,.15) rectangle (-.97,-.15);
\draw[red,  ] (-2,0)--(-.63,0)(-.57,0)--(.7,0)(1.2,0)--(2,0);
\end{scope}

\node[] at (10,0){\large $=$};

\begin{scope}[xshift=16cm]

%\node[] at (-5,0) {\huge $=$};

 \begin{scope}[xshift=-4cm, scale=.8]
 %----delta sphere___----
\node[] at (0,0) {\Large $\Delta$};

  \begin{scope}[scale=.5,xshift=3.5cm, yshift=-2cm]
   \draw[] (0,0) circle (1cm);
   %-----------draw full dashed ellipse
\begin{scope}
    \clip (-1,0) rectangle (1,1);
\draw[ dashed] (0,0) ellipse (1cm and .3cm);
\end{scope}
%-------------- Draw lower half of ellipse
\begin{scope}
    \clip (-.64,0) rectangle (1,-1);
    \draw[] (0,0) ellipse (1cm and .3cm);
\end{scope}
\begin{scope}
    \clip (-1,1) rectangle (1,-1);
    \draw[] (-.8,0) ellipse (.2cm and .45cm);
\end{scope}

\end{scope}
%---end delta sphere------

\end{scope}

\draw[] (0,0) circle (1cm);
\begin{scope}
    \clip (-1,0) rectangle (1,1);
\draw[ dotted] (0,0) ellipse (1cm and .3cm);
\end{scope}
\begin{scope}
    \clip (-.64,0) rectangle (1,-1);
    \draw[] (0,0) ellipse (1cm and .3cm);
\end{scope}
\begin{scope}
    \clip (-1,1) rectangle (1,-1);
    \draw[] (-.8,0) ellipse (.2cm and .45cm);
\end{scope}

\draw [white,fill=white]  (-1.05,.15) rectangle (-.97,-.15);
\draw[red,  ] (-2,0)--(-.63,0)(-.57,0)--(.7,0)(1.2,0)--(2,0);

\end{scope}
\end{tikzpicture} }

%---------------------------------------------------------------------------------------------------------------------------------------------------------------
\def\aft{{\overrightarrow{4T}}}
\def\aAS{{\overrightarrow{AS}}}
\def\aSTU{{\overrightarrow{STU}}}
\def\aIHX{{\overrightarrow{IHX}}}
\def\aVI{{\overrightarrow{VI}}}

\usetikzlibrary{arrows}
\usepackage{wrapfig}

\theoremstyle{definition}

\theoremstyle{remark}

\numberwithin{equation}{section}

\usepackage[T1]{fontenc}

\title{ Ribbon 2-knots, 1+1=2, and Duflo's Theorem for arbitrary Lie Algebras}
%\author[1]{Dror Bar-Natan}\thanks{drorbn@math.toronto.edu}
%\author[2]{Zsuzsanna Dancso}\thanks{zsuzsanna.dancso@anu.edu.au}
%\thanks{E.E@university.edu}}
%\affil[1]{Department of Computer Science, \LaTeX\ University}
%\affil[2]{Department of Mechanical Engineering, \LaTeX\ University}
%\author[3]{Nancy Scherich}\thanks{nscherich@ucsb.edu}

%_________________________________________Begin Doc___________________________________________________
\begin{document}
\date{\today}
\title{{Ribbon 2-knots, 1+1=2, and Duflo's Theorem for arbitrary Lie Algebras}}
\author{Dror~Bar-Natan}
\address{
  Department of Mathematics\\
  University of Toronto\\
  Toronto Ontario M5S 2E4\\
  Canada
}
\email{drorbn@math.toronto.edu}
\urladdr{http://www.math.toronto.edu/~drorbn}

\author{Zsuzsanna Dancso}
\address{
  School of Mathematics and Statistics\\
  The University of Sydney\\
  Eastern Ave\\
  Camperdown NSW 2006, Australia
}
\email{zsuzsanna.dancso@sydney.edu.au}
\urladdr{http://www.zsuzsannadancso.net}

\author{Nancy Scherich}
\address{
Department of Mathematics\\
South Hall, Room 6607\\
University of California\\
Santa Barbara, CA 93106-3080\\
United States
}
\email{nscherich@math.ucsb.edu}
\urladdr{http://www.nancyscherich.com}

%\subjclass{57M25}
\keywords{knots, 2-knots, tangles, expansions, finite type invariants, Lie algebras, Duflo’s
theorem
}

\thanks{This work was partially supported by NSERC grant RGPIN 262178 and by ARC DECRA DE170101128.}

\begin{abstract}
We explain a direct topological proof for the multiplicativity of Duflo isomorphism for arbitrary finite dimensional Lie algebras, and derive the explicit formula for the Duflo map. The proof follows a series of implications, starting with ``the calculation 1+1=2 on a 4D abacus'', using the study of {\em homomorphic expansions} (aka universal finite type invariants) for ribbon 2-knots, and the relationship between the corresponding associated graded space of {\em arrow diagrams} and universal enveloping algebras. This complements the results of the first author, Le and Thurston, where similar arguments using a ``3D abacus'' and the Kontsevich Integral were used to deduce Duflo's theorem for {\em metrized} Lie algebras; and results of the first two authors on finite type invariants of w-knotted objects, which also imply a relation of 2-knots with the Duflo theorem in full generality, though via a lengthier path.
\end{abstract}
\maketitle

\section{Introduction}

\subsection{Executive summary for experts} In~\cite{BLT}, the first author, Thang Le and Dylan Thurston tell a story of how a certain topological equality \pictureone\ ($\Delta$~for ``doubling''), or ``$1+1=2$ as computed on an abacus'', leads via the Kontsevitch integral to an equality of {\em chord diagrams}, which, given a metrized Lie algebra $\g$, can be interpreted as an equality in (a completion of) $S(\g)_\g\otimes S(\g)_g\otimes U(\g)$, which can be interpreted as ``the multiplicative property of the Duflo isomorphism''.

However, chord diagrams only describe tensors related to metrized Lie algebras, while the Duflo isomorphism is multiplicative for {\em all} finite-dimensional Lie algebras. Hence, as told in~\cite{BLT}, the ``1+1=2'' story is less general than it could be.

This paper removes this blemish by raising ``1+1=2'' one dimension up to be an equality \picturetwo\ ($\Delta$~for doubling) of 2-knots in $\R^4$, which then, using an appropriate replacement of the Kontsevich integral, becomes an equality of {\em arrow diagrams}, which in itself can be interpreted as an equality in (a completion of) $S(\g^\ast)_\g\otimes S(\g^\ast)_g\otimes U(\g)$ for an arbitrary finite-dimensional Lie algebra $\g$, proving the multiplicative property of the Duflo isomorphism in full generality.

\subsection{Introduction for all}For a finite dimensional Lie algebra $\g$, the Duflo isomorphism is an algebra isomorphism $\D: S(\g)^\g \rightarrow U(\g)^\g$, where $U(\g)^\g$ and $S(\g)^\g$ are the $\g$ invariant subspaces for the adjoint action of $\g$ on the universal enveloping algebra and the symmetric algebra. (Recall $x$ is called invariant if $g\cdot x=0$ for every $g\in \g$.) The map $\D$ is given by an explicit formula. The difficulty is in showing that this formula represents a homomorphism, namely that it is multiplicative. We will henceforth refer to the problem of showing the multiplicativity of the Duflo map as {\em the Duflo problem}.

The Duflo isomorphism was first described for semi-simple Lie algebras by Harish-Chandra in 1951 \cite{HC}. Kirillov conjectured that a formulation of Harish-Chandra's map  was an algebra isomorphism for all finite dimensional Lie algebras. Duflo proved Kirillov's conjecture in 1977 \cite{Duflo}, and it is now referred to as Duflo's Theorem. Since then, there have been many proofs of Duflo's theorem using techniques outside the setting of the originally formulated problem. For metrized Lie algebras, a topological proof was found by the first author, Le and Thurston in 2009 \cite{BLT} using the Kontsevich integral and a knot theoretic interpretation of ``$1+1=2$ on an abacus''. In this paper we give a new topological proof of Duflo's theorem for {\em arbitrary finite dimensional Lie algebras} using a ``4-dimensional abacus'' instead of an ordinary 3-dimensional one.

The Dulfo problem is also implied by the now-proven Kashiwara--Vergne (KV) conjecture \cite{KV}. The KV conjecture states that a certain set of equations has a solution in the group of \textit{tangential automorphisms} of the degree completed free Lie algebra on 2 generators. One can extract the Duflo isomorphism from such a solution.  The KV conjecture was proven by \cite{AM} in 2006 using deformation quantization. New proofs exploiting the relationship between the KV equations and Drinfel'd associators were found by Alekseev, Torossian and Enriquez shortly thereafter \cite{AT, AET}. A topological context and solution in terms of the 4-dimensional knot theory of {\em w-foams} was established by the first two authors in \cite{WKO2, WKO3}. In this context, the KV-conjecture is equivalent to the existence of a \textit{homomorphic expansion} for w-foams. In this paper, we directly address how such a homomorphic expansion gives rise to a solution of the Duflo problem and a formula for $\D$, and thus completing a topological solution of the Duflo problem in full generality.

This paper is structured to follow the implications shown in the Figure \ref{sketchofproof}. We start with an intuitive topological statement ``$1+1=2$'' and interpret this in the setting of w-foams. Using the homomorphic expansion $Z$ and the tensor interpretation map $T$, we can re-interpret ``$1+1=2$'' as an equality in $\hat S(\g^*)_\g \otimes  \hat U(\g)$. This will imply that our formulation of the Duflo isomorphism is an algebra homomorphism.  The essential ingredient in this process is the homomorphic expansion $Z$ of \cite{WKO2, WKO3}. Finally, we derive the explicit formula for the Duflo map from $Z$.

This paper builds on the setup and results of \cite{WKO2}. While we provide brief reviews of concepts, we assume some familiarity with finite type invariants and virtual/welded knots. The reader who is new to the subject may wish to have a copy of \cite{WKO2} on hand for reference: throughout the paper we will refer to specific sections for background details.

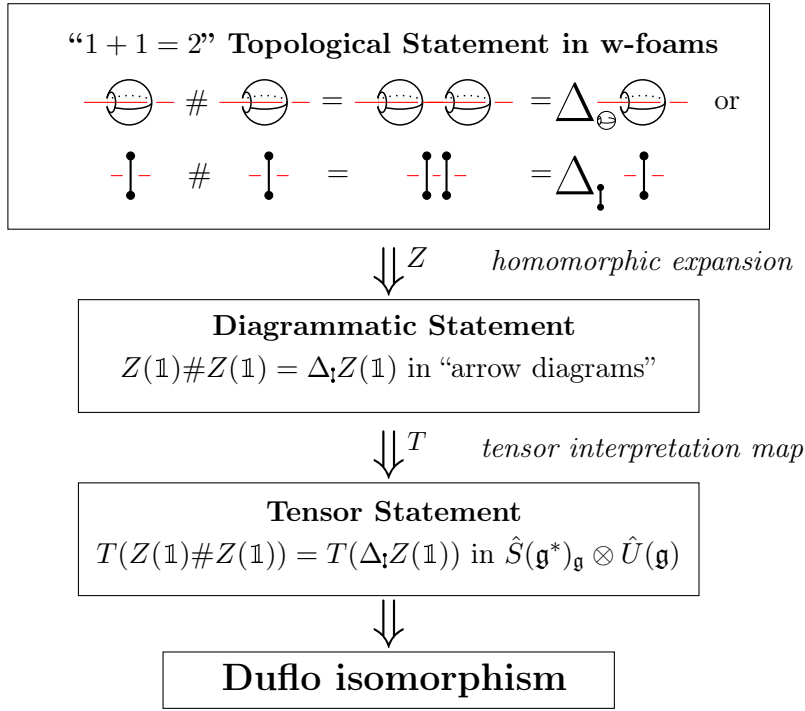
\begin{figure}

\begin{tikzpicture}[scale=.75]
\draw[] (-6.75,1.75) rectangle (6.75,-2.25);

\node[] at (0,1) {\textbf{  ``$1+1=2$'' Topological Statement in w-foams}};
\node[] at (6,0) {or};
%\node[] at (1.25,-1.5) {\small w-Foams};

 \node[] at (0,-3){\huge $\Downarrow$};
  \node[] at (.5,-2.75){$Z$};
   \node[] at (4.5, -2.85) {\textit{homomorphic expansion}};

   \draw[] (-5.5,-3.5) rectangle (5.5,-5.5);
\node[] at (0,-4) { \textbf{ Diagrammatic Statement}};
 \node[] at (0,-4.75) { $Z(\mathbbm{1}) \# Z(\mathbbm{1})=\Delta_{\sphere}Z( \mathbbm{1})$ in ``arrow diagrams''};

 \node[] at (0,-6.25){\huge $\Downarrow$};
 \node[] at (.5,-6){\small $T$};
 \node[] at (4.5, -6.18) { \textit{tensor interpretation map}};

 \draw[] (-5.5,-6.75) rectangle (5.5,-8.75);
\node[] at (0,-7.25) {\textbf{  Tensor Statement}};
\node[] at (0, -8.) { $T(Z(\mathbbm{1}) \# Z(\mathbbm{1}))=T(\Delta_{\sphere}Z( \mathbbm{1}))$   in $\hat S(\g^*)_\g \otimes  \hat U(\g)$};

 \draw[] (-4,-9.75) rectangle (4,-10.75);
 \node[] at (0,-9.25){\huge $\Downarrow$};
\node[]  at (0,-10.25) { \textbf{ {\Large Duflo isomorphism}}};

%-----------------------------full sphere equations---------------------------------------
\begin{scope}[xshift=-4.6cm,yshift=0cm]
\begin{scope}[scale=.4]

\draw[line width= .2 mm] (0,0) circle (1cm);
\begin{scope}
    \clip (-1,0) rectangle (1,1);
\draw[line width= .2 mm, dotted] (0,0) ellipse (1cm and .3cm);
\end{scope}
\begin{scope}
    \clip (-.64,0) rectangle (1,-1);
    \draw[line width= .2 mm] (0,0) ellipse (1cm and .3cm);
\end{scope}
\begin{scope}
    \clip (-1,1) rectangle (1,-1);
    \draw[line width= .2 mm] (-.8,0) ellipse (.2cm and .45cm);
\end{scope}

\draw [white,fill=white]  (-1.05,.15) rectangle (-.97,-.15);
\draw[red,  ] (-2,0)--(-.63,0)(-.57,0)--(.8,0)(1.2,0)--(2,0);

\node[] at (3,0){ $\#$};

\begin{scope}[xshift=6cm]
\draw[line width= .2 mm] (0,0) circle (1cm);
\begin{scope}
    \clip (-1,0) rectangle (1,1);
\draw[line width= .2 mm, dotted] (0,0) ellipse (1cm and .3cm);
\end{scope}
\begin{scope}
    \clip (-.64,0) rectangle (1,-1);
    \draw[line width= .2 mm] (0,0) ellipse (1cm and .3cm);
\end{scope}
\begin{scope}
    \clip (-1,1) rectangle (1,-1);
    \draw[line width= .2 mm] (-.8,0) ellipse (.2cm and .45cm);
\end{scope}

\draw [white,fill=white]  (-1.05,.15) rectangle (-.97,-.15);
\draw[red,  ] (-2,0)--(-.63,0)(-.57,0)--(.8,0)(1.2,0)--(2,0);

\end{scope}

\node[] at (9,0){ $=$};

\begin{scope}[xshift=12cm]
\draw[line width= .2 mm] (0,0) circle (1cm);
\begin{scope}
    \clip (-1,0) rectangle (1,1);
\draw[line width= .2 mm, dotted] (0,0) ellipse (1cm and .3cm);
\end{scope}
\begin{scope}
    \clip (-.64,0) rectangle (1,-1);
    \draw[line width= .2 mm] (0,0) ellipse (1cm and .3cm);
\end{scope}
\begin{scope}
    \clip (-1,1) rectangle (1,-1);
    \draw[line width= .2 mm] (-.8,0) ellipse (.2cm and .45cm);
\end{scope}

\draw [white,fill=white]  (-1.05,.15) rectangle (-.97,-.15);
\draw[red,  ] (-2,0)--(-.63,0)(-.57,0)--(.8,0)(1.2,0)--(2,0);

\begin{scope}[xshift=3cm]
\draw[line width= .2 mm] (0,0) circle (1cm);
\begin{scope}
    \clip (-1,0) rectangle (1,1);
\draw[line width= .2 mm, dotted] (0,0) ellipse (1cm and .3cm);
\end{scope}
\begin{scope}
    \clip (-.64,0) rectangle (1,-1);
    \draw[line width= .2 mm] (0,0) ellipse (1cm and .3cm);
\end{scope}
\begin{scope}
    \clip (-1,1) rectangle (1,-1);
    \draw[line width= .2 mm] (-.8,0) ellipse (.2cm and .45cm);
\end{scope}

\draw [white,fill=white]  (-1.05,.15) rectangle (-.97,-.15);
\draw[red,  ] (-2,0)--(-.63,0)(-.57,0)--(.8,0)(1.2,0)--(2,0);

\end{scope}
\end{scope}
\end{scope}

\end{scope}

\begin{scope}[xshift=-4.5cm,yshift=0cm]
\begin{scope}[xshift=9cm,yshift=0cm, scale=.4]
\node[] at (-4.5,0) {$=$};
%\node at (-3,0) {\huge$\Delta$};
\begin{scope}[xshift=-3cm]\deltasphere\end{scope}

\draw[line width= .2 mm] (0,0) circle (1cm);
\begin{scope}
    \clip (-1,0) rectangle (1,1);
\draw[line width= .2 mm, dotted] (0,0) ellipse (1cm and .3cm);
\end{scope}
\begin{scope}
    \clip (-.64,0) rectangle (1,-1);
    \draw[line width= .2 mm] (0,0) ellipse (1cm and .3cm);
\end{scope}
\begin{scope}
    \clip (-1,1) rectangle (1,-1);
    \draw[line width= .2 mm] (-.8,0) ellipse (.2cm and .45cm);
\end{scope}

\draw [white,fill=white]  (-1.05,.15) rectangle (-.97,-.15);
\draw[red,  ] (-2,0)--(-.63,0)(-.57,0)--(.8,0)(1.2,0)--(2,0);
\end{scope}

%---------------------- end full sphere eq-----------------------------------------------
\end{scope}

%----------------------------- wfoam eq -------------------------------------------------------
\begin{scope}[xshift=-4.05cm,yshift=-1.3cm]
\begin{scope}[scale=.35]

\begin{scope}[xshift=-1.5cm]
\draw[thick](0,-1)--(0,1);
\draw[red](-1,0)--(-.4,0);
\draw[red] (.4,0)--(1,0);
\filldraw[fill=black, draw=black] (0,1) circle (2mm) ;
\filldraw[fill=black, draw=black] (0,-1) circle (2mm) ;
\end{scope}
\node[] at (2,0){ $\#$};

\begin{scope}[xshift=5.5cm]
\draw[thick](0,-1)--(0,1);
\draw[red](-1,0)--(-.4,0);
\draw[red] (.4,0)--(1,0);
\filldraw[fill=black, draw=black] (0,1) circle (2mm) ;
\filldraw[fill=black, draw=black] (0,-1) circle (2mm) ;

\end{scope}

\node[] at (9,0){ $=$};

\begin{scope}[xshift=13.5cm]
\draw[thick](0,-1)--(0,1);
\draw[red](-1,0)--(-.4,0);
\draw[red] (.3,0)--(.7,0);
\filldraw[fill=black, draw=black] (0,1) circle (2mm) ;
\filldraw[fill=black, draw=black] (0,-1) circle (2mm) ;
\draw[thick](1,-1)--(1,1);
\draw[red](1.4,0)--(2,0);
%\draw[red] (.15,0)--(1,0);
\filldraw[fill=black, draw=black] (1,1) circle (2mm) ;
\filldraw[fill=black, draw=black] (1,-1) circle (2mm) ;
\end{scope}

\node[] at (19.3,0){ $=$};
\begin{scope}[xshift=21cm]
\node at (0,0) {\huge $\Delta$};
\begin{scope}[xshift=1.3cm, yshift=-1.1cm, scale=.5]
\draw[thick](0,-1)--(0,1);
\filldraw[fill=black, draw=black] (0,1) circle (3mm) ;
\filldraw[fill=black, draw=black] (0,-1) circle (3mm) ;
\end{scope}

\end{scope}

\begin{scope}[xshift=24.5cm]

\draw[thick](0,-1)--(0,1);
\draw[red](-1,0)--(-.4,0);
\draw[red] (.4,0)--(1,0);
\filldraw[fill=black, draw=black] (0,1) circle (2mm) ;
\filldraw[fill=black, draw=black] (0,-1) circle (2mm) ;
\end{scope}

\end{scope}
\end{scope}

\end{tikzpicture}
\captionof{figure}{The rough sketch of the proof.}\label{sketchofproof}

\end{figure}

\section{Understanding the Topological Statement and w-foams }\label{sec:wFoams}

\subsection{``4D Abacus Arithmetic'' }\label{sec:abacus} We begin by introducing the ``threaded sphere'' or ``abacus bead'' shown in Figure~\ref{fig:ThreadedSphere}: this is a knotted object in $\mathbb{R}^4$, and an element of the space of {\em w-foams} studied in \cite{WKO3}. To understand this 4D object, we describe it as a sequence of 3D slices, or ``frames of a 3D movie''.  The movie starts with two points $A$ and $B$. Point $B$ opens up to a circle,  $A$  flies through the circle, and $B$ closes to a point again.  In 4 dimensions this is a line threaded through a sphere with no intersections; and embedded pair. We depict this object as \threadedspheretube; this is a \textit{broken line/surface diagram} in the sense of \cite{CS}.

\begin{figure}
\begin{tikzpicture}[scale=.85]
\shade[ball color = gray!40, opacity = 0.3] (0,0) circle (.5cm);
\draw[thick] (0,-.5) ellipse (.1cm and .05cm);
\draw[red, fill=red] (-1,-1) circle (.05cm);

 \begin{scope}[xshift= 2.5cm]
  \shade[ball color = gray!40, opacity = 0.3] (0,0) circle (.5cm);
\draw[thick] (0,-.2) ellipse (.45cm and .1cm);
\draw[red,opacity = 0.5,->] (-1,-1)to [in=240, out=50] (-.1,-.4);
\draw[red, fill=red] (-.45,-.68) circle (.05cm);
\end{scope}

 \begin{scope}[xshift= 5cm]
 \shade[ball color = gray!40, opacity = .3] (0,0) circle (.5cm);
\draw[thick] (0,0) ellipse (.5cm and .1cm);
\draw[red,opacity = 0.5] (-1,-1)to [in=270, out=50] (0,-.2);
\draw[red,->,opacity = 0.5] (0,-.03)to [in=210, out=80] (.7,.5);
\draw[red, fill=red] (.2,.25) circle (.05cm);
\end{scope}

 \begin{scope}[xshift= 7.5cm]
 \shade[ball color = gray!40, opacity = 0.3] (0,0) circle (.5cm);
\draw[thick] (0,.5) ellipse (.1cm and .05cm);
\draw[red,opacity = 0.5] (-1,-1)to [in=270, out=50] (0,-.2);
\draw[red,opacity = 0.5] (0,0)to [in=210, out=90] (.7,.5);
\draw[red,opacity = 0.5] (.7,.5) to [in=250, out=30] (1,1);
\draw[red,opacity = 0.5] (.0,-.2) -- (0,0);
\draw[red, fill=red] (1,1) circle (.05cm);
\end{scope}

%threadedspheretube
 \begin{scope}[xshift= -6cm, scale=.65]
\draw[thick] (0,0) circle (1cm);
\begin{scope}
    \clip (-1,0) rectangle (1,1);
\draw[ thick,dotted] (0,0) ellipse (1cm and .3cm);
\end{scope}
\begin{scope}
    \clip (-.64,0) rectangle (1,-1);
    \draw[thick] (0,0) ellipse (1cm and .3cm);
\end{scope}
\begin{scope}
    \clip (-1,1) rectangle (1,-1);
    \draw[thick] (-.8,0) ellipse (.2cm and .45cm);
\end{scope}

\draw [white,fill=white]  (-1.05,.15) rectangle (-.97,-.15);
\draw[red, thick ] (-2,0)--(-.63,0)(-.57,0)--(.8,0)(1.2,0)--(2,0);
\end{scope}

  \node[] at (-3.5,0){\huge $:=$};

 \end{tikzpicture}
 \captionof{figure}{The threaded sphere as a movie of a circle and a point in $\R^3$.}\label{fig:ThreadedSphere}
 \end{figure}
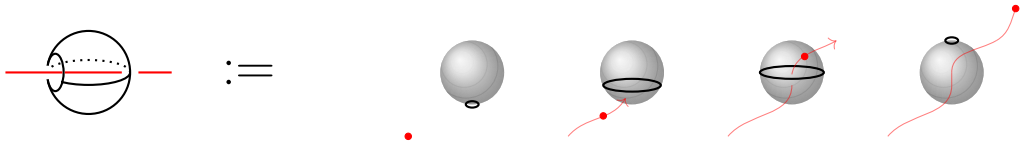

We can interpret ``addition on the 4D abacus'' by iteratively threading embedded spheres on a single thread, or in other words, connecting along the threads, as shown in Figure~\ref{fig:4dabacuseq}. There are two ways to obtain the number 2 from the number 1: by addition -- which is represented by iterative threading on the ``abacus'' thread as above, or by doubling, as explained below.

 Assuming the sphere is equipped with a normal vector field (a.k.a. framing, and we will define such a framing later), it makes sense to double the sphere along its framing. This operation will be denoted  by \begin{tikzpicture}[scale=.35,baseline=-.25cm]\inlinedeltasphere\end{tikzpicture} \threadedspheretube .
 For example, given the outward-pointing normal vector field, doubling the sphere results in two concentric spheres. In $\R^4$ two concentric spheres
 can be separated without intersecting each other. E.g, assume the coordinates are called $x$, $y$, $z$ and $t$, and two concentric spheres lie in the hyperplane $\{z=0\}$. Then one can continuously move the inner sphere into the hyperplane $z=1$, followed by moving it to a disjoint $t$-position from the outer sphere and then back to the $\{z=0\}$ hyperplane.

 Combining this with threading, we see that doubling a threaded sphere is the same as  the connected sum of two threaded spheres, as shown in Figure \ref{fig:4dabacuseq}. To simplify notation, we will denote the threaded sphere by $\mathbbm{1}$, and write $\mathbbm{1} \# \mathbbm{1}= \begin{tikzpicture}[scale=.35,baseline=-.15cm]\inlinedeltasphere\end{tikzpicture}  \mathbbm{1}  $.

 \begin{figure}
\begin{tikzpicture}[scale=.55]
\draw[thick] (0,0) circle (1cm);
\begin{scope}
    \clip (-1,0) rectangle (1,1);
\draw[ thick,dotted] (0,0) ellipse (1cm and .3cm);
\end{scope}
\begin{scope}
    \clip (-.64,0) rectangle (1,-1);
    \draw[thick] (0,0) ellipse (1cm and .3cm);
\end{scope}
\begin{scope}
    \clip (-1,1) rectangle (1,-1);
    \draw[thick] (-.8,0) ellipse (.2cm and .45cm);
\end{scope}

\draw [white,fill=white]  (-1.05,.15) rectangle (-.97,-.15);
\draw[red, thick ] (-2,0)--(-.63,0)(-.57,0)--(.7,0)(1.2,0)--(2,0);

\node[] at (2.5,0){\large $\#$};

\begin{scope}[xshift=5cm]
\draw[thick] (0,0) circle (1cm);
\begin{scope}
    \clip (-1,0) rectangle (1,1);
\draw[ thick,dotted] (0,0) ellipse (1cm and .3cm);
\end{scope}
\begin{scope}
    \clip (-.64,0) rectangle (1,-1);
    \draw[thick] (0,0) ellipse (1cm and .3cm);
\end{scope}
\begin{scope}
    \clip (-1,1) rectangle (1,-1);
    \draw[thick] (-.8,0) ellipse (.2cm and .45cm);
\end{scope}

\draw [white,fill=white]  (-1.05,.15) rectangle (-.97,-.15);
\draw[red, thick ] (-2,0)--(-.63,0)(-.57,0)--(.7,0)(1.2,0)--(2,0);
\end{scope}

\node[] at (8.5,0){\huge $=$};
%\node[] at(8.5,-2) {\tiny assuming reasonable};
%\node[] at(8.5,-2.5){\tiny framing};
%\draw[->] (8.5,-1.5)--(8.5,-.5);

%-----------insert middle diagram ---------------
\begin{scope}[xshift=11.5cm]
\draw[thick] (0,0) circle (1cm);
\begin{scope}
    \clip (-1,0) rectangle (1,1);
\draw[ thick,dotted] (0,0) ellipse (1cm and .3cm);
\end{scope}
\begin{scope}
    \clip (-.64,0) rectangle (1,-1);
    \draw[thick] (0,0) ellipse (1cm and .3cm);
\end{scope}
\begin{scope}
    \clip (-1,1) rectangle (1,-1);
    \draw[thick] (-.8,0) ellipse (.2cm and .45cm);
\end{scope}

\draw [white,fill=white]  (-1.05,.15) rectangle (-.97,-.15);
\draw[red, thick ] (-2,0)--(-.63,0)(-.57,0)--(.7,0)(1.2,0)--(2,0);

\begin{scope}[xshift=3cm]
\draw[thick] (0,0) circle (1cm);
\begin{scope}
    \clip (-1,0) rectangle (1,1);
\draw[ thick,dotted] (0,0) ellipse (1cm and .3cm);
\end{scope}
\begin{scope}
    \clip (-.64,0) rectangle (1,-1);
    \draw[thick] (0,0) ellipse (1cm and .3cm);
\end{scope}
\begin{scope}
    \clip (-1,1) rectangle (1,-1);
    \draw[thick] (-.8,0) ellipse (.2cm and .45cm);
\end{scope}

\draw [white,fill=white]  (-1.05,.15) rectangle (-.97,-.15);
\draw[red, thick ] (-1.3,0)--(-.63,0)(-.57,0)--(.7,0)(1.2,0)--(2,0);
\end{scope}
\end{scope}

%----------end insert middle diagram-----------------

\begin{scope}[xshift=23cm]

\node[] at (-5,0) {\huge $=$};

 \begin{scope}[xshift=-3.5cm, scale=.65]
\deltasphere

\end{scope}

\draw[thick] (0,0) circle (1cm);
\begin{scope}
    \clip (-1,0) rectangle (1,1);
\draw[ thick,dotted] (0,0) ellipse (1cm and .3cm);
\end{scope}
\begin{scope}
    \clip (-.64,0) rectangle (1,-1);
    \draw[thick] (0,0) ellipse (1cm and .3cm);
\end{scope}
\begin{scope}
    \clip (-1,1) rectangle (1,-1);
    \draw[thick] (-.8,0) ellipse (.2cm and .45cm);
\end{scope}

\draw [white,fill=white]  (-1.05,.15) rectangle (-.97,-.15);
\draw[red, thick ] (-2,0)--(-.63,0)(-.57,0)--(.7,0)(1.2,0)--(2,0);

\end{scope}
\end{tikzpicture}
\captionof{figure}{``$1+1=2$'' on the 4D abacus.}\label{fig:4dabacuseq}
\end{figure}
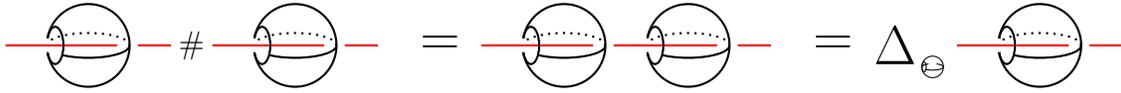

\subsection{w-Foams}\label{sec:Foams} In order to introduce the main ingredient $Z$, the homomorphic expansion, we need to place the threaded sphere in the more complex space of w-foams. We will briefly describe this space here and for more detail refer to \cite[Section 2]{WKO3} or \cite[Section 4.1]{WKO2}.

The space of w-foams, denoted $\wTFe$, is a {\em circuit algebra}, as defined in \cite[Section 2.4]{WKO2}. In short, circuit algebras are similar to the planar algebras of Jones \cite{Jones} but without the planarity requirement for the connection diagrams. In other words, a circuit algebra operation is to take some number of w-foams and connect their ends in an arbitrary (not necessarily planar) way, but respecting colour and orientation. For an example of a circuit algebra connection diagram, see Figure~\ref{fig:CAConn}. Circuit algebras are also close relatives of modular operads \cite{DHR}.

Each generator and relation of $\wTFe$ has a local topological interpretation in terms of certain {\em ribbon knotted tubes with foam vertices and strings} in $\R^4$.  Note that one dimensional strands cannot be knotted in $\R^4$, however, they can be knotted {\em with} two-dimensional tubes. In the diagrams, two-dimensional tubes will be denoted by \textbf{thick lines} and one dimensional strings by {\color{red}thin red lines}.

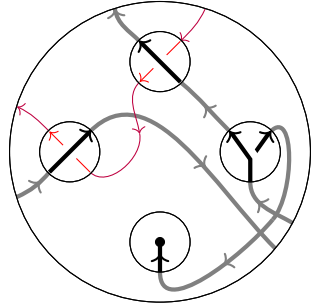
\begin{figure}
\begin{tikzpicture}

\draw[ultra thick, gray](.92,.28)to [in=320,out=135](.28,.92);%right 2 to top 4
\draw[xshift=.6cm, yshift=.6cm,thick, gray,->] (0,0)--(-.01,.01);
\draw[ultra thick, gray](-.27,1.47)to [in=280,out=135](-.6, 1.9);%top 2 out
\draw[thick, gray,->](-.27,1.47)to [in=280,out=135](-.6, 1.9);
\draw[ultra thick, gray](-.93,.27)to [in=135,out=45](1.53,-1.283);%left 1 out
\draw[xshift=.6cm, yshift=-.2cm,thick, gray,->] (0,0)--(.01,-.01);
\draw[ultra thick, gray](0,-1.6)to [in=230,out=270](1.5,-.9);%bot to right 1
\draw[ultra thick, gray](1.5,-.9)to [in=45,out=50](1.47,.27);%bot to right 1
\draw[xshift=.85cm, yshift=-1.5cm,thick, gray,->] (0,0)--(-.01,-.007);
\draw[ultra thick, gray](-1.47,-.27)to [in=25,out=230](-1.9,-.6);%left 3 to out
\draw[xshift=-1.6cm, yshift=-.4cm,thick, gray,->] (0,0)--(.01,.01);
\draw[ultra thick, gray](1.2,-.4)to [in=150,out=270](1.77,-.95);%right 3 to out
\draw[xshift=1.3cm, yshift=-.66cm,thick, gray,->] (0,0)--(-.01,.008);

\draw[purple,->] (.6, 1.9)to [in=45,out=245] (.27,1.47);%out to top 1
\draw[purple,->] (-.27,.93)to [in=100,out=225] (-.27, .24);%top 3 to left 4 part 1
\draw[purple,] (-.27, .24)to [in=315,out=280] (-.93,-.27);%top 3 to left 4. part 2
\draw[purple,->] (-1.47,.27)to [in=335,out=135] (-1.9,.6);%left 2 out

\draw[] (0,0) circle (2cm);
\draw[fill=white](0, 1.2)circle (.4cm);
\draw[fill=white](0,-1.2)circle (.4cm);
\draw[fill=white](-1.2,0)circle (.4cm);
\draw[fill=white](1.2,0)circle (.4cm);

%---------------begin left circ------------
% corners (-.93,.27) (-1.47,.27)  (-1.47,-.27) (-.93,-.27)

\draw[red, ->] (-.93,-.27)--(-1.47,.27);
\draw[thick, ->] (-1.47,-.27)--(-.91,.29);
\draw[line width=2.5mm,white] (-1.4,-.2)--(-1,.2);
\draw[ultra thick, ](-1.47,-.27)--(-.94,.26);

%----------------end left circ----------------
%-------top circ-------------------------------
% corners (.27,1.47) (-.27,1.47)  (-.27,.93) (.27,.93)

\draw[red, ->] (.27,1.47)--(-.27,.93);
\draw[line width=2.5mm,white] (.2,1)--(-.2,1.4);
\draw[thick, ->] (.27,.93)--(-.28,1.48);
\draw[ultra thick, ](.27,.93)--(-.25,1.45);

%---------end top circ------------------------
%-------right circ-------------------------------
% corners  (1.47,.27)  (.93,.27)  (1.2,-.4)
\draw[ultra thick, ] (1.2,-.1)-- (1.45,.25) ;
\draw[thick, ->] (1.2,-.1)-- (1.48,.28) ;

\draw[line width=2.5mm,white] (1.2,-.2)--(1.2,-.1)--(.95,.25) ;
\draw[ultra thick, ](1.2,-.4)--(1.2,-.1)--(.95,.25) ;
\draw[ thick,-> ](1.2,-.1)--(.92,.28) ;

%---------end right circ------------------------
%-------bot circ-------------------------------
% corners (-1.6,0)
\draw[fill= black ](0,-1.2)circle (.06cm);
\draw[ultra thick, ] (0,-1.6)--(0,-1.2);
\draw[thick,->] (0,-1.6)--(0,-1.35);
%---------end bot circ------------------------

\draw[] (0,0) circle (2cm);
\draw[](0, 1.2)circle (.4cm);
\draw[](0,-1.2)circle (.4cm);
\draw[](-1.2,0)circle (.4cm);
\draw[](1.2,0)circle (.4cm);

\end{tikzpicture}
\caption{A circuit algebra connection diagram applied to two crossings, a vertex and a cap produces a larger w-foam when the inner circles are deleted.}\label{fig:CAConn}
\end{figure}

With this in mind, we define $\wTFe$ as a circuit algebra given in terms of generators and relations, and with some extra operations beyond circuit algebra composition. The generators, relations and operations are explained in detail in Sections~\ref{subsec:wgens} and~\ref{subsec:wrels}. The local topological interpretation of the generators and relations provides much of the intuition for this paper.

\[
  \wTFe=CA\!\left.\left.\left\langle
  \underbrace{\raisebox{-2mm}{$\stackrel{\rcross}{1}, \stackrel{\lcross}{2}, \stackrel{\capoff}{3}, \stackrel{\bdlambda}{4}, \stackrel{\bulambda}{5}, \stackrel{\mrcross}{6}, \stackrel{\mlcross}{7}, \stackrel{\rlambda}{8}, \stackrel{\mlambda}{9}$}}_{\text{generators}}
  \right|
  \underbrace{\parbox{1in}{\centering  $R1^s$ , $R2$, $R3$, $R4$, $OC$, $CP$ }}_{\text{relations}}
  \right|
  \underbrace{\raisebox{-2mm}{\centering $ u_e$}}_{\text{extra operation}}
  \right\rangle
\]

%and the circuit algebra $\wTF$ is conjecturally a Reidemeister theory for this space (i.e., there is a surjection $\delta$ from the
%circuit algebra $\wTF$ to ribbon knotted tubes with foam vertices, and $\delta$ is conjectured to be an isomorphism).   However, the corresponding Reidemeister theorem is only conjectural. and can be described as a planar algebra or as a circuit algebra. For the sake of consicion, we will describe

%To be completely precise, $\wTFe$ as a circuit algebra has more vertex generators than shown above: the vertices appear with all possible orientations of the strands. However, all other versions can be obtained from the ones shown above using ``orientation switch'' operations (to be discussed in Section \ref{subsec:wops}).

In \cite{WKO3} $\wTFe$ appears in its larger unoriented version (includes a {\em wen} and relations describing its behaviour) and it is equipped with more auxiliary operations (eg punctures, orientation switches). The expansion $Z$ constructed there is \textit{homomorphic} with respect to all of the operations in the appropriate sense. Here we focus only on orientable surfaces and the operations strictly needed for the Duflo problem -- the restriction of the $Z$ of \cite{WKO3} is a homomorphic expansion for this structure. In the following sections we will provide brief descriptions of $\wTFe$, its associated graded space of arrow diagrams, and the homomorphic expansion, to make this paper more self-contained.

\subsubsection{The generators of $\wTFe$}\label{subsec:wgens}
We begin by discussing the local topological meaning of each generator shown above. For more detail, see \cite[Sections 4.1.1 and 4.5]{WKO2}

%\parpic[l]{\includegraphics[height=5cm]{figs/TheVertex.ps}}
 Knotted (more precisely, braided) tubes in $\R^4$ can equivalently be thought of as movies of flying circles in $\R^3$. The two crossings -- generators 1 and 2 -- stand for movies where two circles trade places as the circle corresponding to the under strand flies through the circle corresponding to the over strand entering from below. Note that our ``time'' flows from bottom to top. The bulleted end in generator 3 represents a tube ``capped off'' by a disk, or alternatively the movie where a circle shrinks to a point and disappears.

 Generators 4 and 5 stand for singular ``foam vertices'', and will be referred to as the positive and negative vertex, respectively. The positive vertex represents the movie shown in Figure~\ref{TrivVertex}: the right circle approaches the left circle from below, flies inside it and merges with it. The negative vertex represents a circle splitting and the inner circle flying out below and to the right.

 %Finally, the generator 6 represents the {\em wen}: a Klein bottle cut open, or equivalently, the movie in which a ring flips over. In band notation the wen is shown as a half twist. See \cite[Section 4.5]{Bar-NatanDancso:WKO2} for a discussion of the wen.

%\parpic[r]{\input figs/MixedCrossings.pstex_t }
The thin red strands denote one dimensional strings in $\R^4$, or ``flying points in $\R^3$''. The crossings between the two types of strands (generators 6 and 7) represent ``points flying through circles''. For example, generator 6, \mlcross,  \hspace{.5mm} stands for ``the point on the right approaches the circle on the left from below, flies through the circle and out to the left above it''. This explains why there are no generators with a thick strand crossing under a thin red strand: a circle cannot fly through a point.

 \begin{figure}
 \begin{tikzpicture}[scale=.7]
\draw[thick,blue] (1,-1) ellipse (.5cm and .1cm);
\draw[thick,blue] (.25,-.57) ellipse (.4cm and .066cm);

\draw[thick,blue] (0,-.1) ellipse (.25cm and .05cm);

\draw[ thick,purple] (0,-.1) ellipse (.5cm and .1cm);
\draw[thick,purple] (-1,-1) ellipse (.5cm and .1cm);
\draw[line width= .8 mm, white] (-.25,-.5) ellipse (.5cm and .1cm);
\draw[thick,purple] (-.25,-.5) ellipse (.5cm and .1cm);

\draw[thick,blue] (0,.3) ellipse (.4cm and .05cm);
\draw[ thick,purple] (0,.3) ellipse (.5cm and .1cm);

\draw[line width= .375 mm,  violet] (0,.65) ellipse (.5cm and .1cm);

\draw[line width= .375 mm,  violet] (0,1) ellipse (.5cm and .1cm);

\begin{scope}[xshift=4.25cm]
\draw[thick] (0,1) ellipse (.5cm and .1cm);
%draw full dashed ellipse
\draw[thick, dashed] (0,0) ellipse (.5cm and .1cm);
% Draw lower half of ellipse
\begin{scope}
    \clip (-1,0) rectangle (1,-.6);
    \draw[thick] (0,0) ellipse (.5cm and .1cm);
\end{scope}
%draw full dashed ellipse
\draw[thick, dashed] (1,-1) ellipse (.5cm and .1cm);
% Draw lower half of ellipse
\begin{scope}
    \clip (0,-1) rectangle (2,-1.6);
    \draw[thick] (1,-1) ellipse (.5cm and .1cm);
\end{scope}
%draw full dashed ellipse
\draw[thick, dashed] (-1,-1) ellipse (.5cm and .1cm);
% Draw lower half of ellipse
\begin{scope}
    \clip (-2,-1) rectangle (0,-1.6);
    \draw[thick] (-1,-1) ellipse (.5cm and .1cm);
\end{scope}
\draw[thick] (-.5,1)--(-.5,0)--(-1.5,-1);
\draw[thick] (.5,1)--(.5,0)--(1.5,-1);
\draw[thick] (-.5,-1)--(.5,0);

\draw[thick] (-.1,-.4)--(-.5,0);
\draw[thick] (.1,-.6)--(.5,-1);

\end{scope}

\begin{scope}[xshift=8cm]
\draw[ line width= .6 mm, ->](-1,-1)--(0,0)--(0,1);
\draw[ line width= .6 mm, ](1,-1)--(.2,-.2);

\end{scope}

\node[] at (2,0){\Huge =};
\node[] at (6,0){\Huge =};

\end{tikzpicture}\hspace{1.5cm} \begin{tikzpicture}[scale=.75, yshift=1cm]

  \draw[thick] (-1,1) ellipse (.5cm and .1cm);
\draw[thick] (-1.5,1)to [in=140, out=270] (-.29,-.5);
\draw[thick] (-.5,1)to [in=130, out=270] (.7,-.49);
%draw full dashed ellipse
\draw[thick, dashed] (.2,-.5) ellipse (.5cm and .1cm);
% Draw lower half of ellipse
\begin{scope}
    \clip (-1,-.5) rectangle (1,-1);
    \draw[thick] (.2,-.5) ellipse (.5cm and .1cm);
\end{scope}

\draw[red] (-1.5,-.7)to [in=230, out=30] (-.6,.2);
\draw[red, fill=red] (-.6,.2) circle (.05cm);

  \begin{scope}[xshift=3cm,yshift=.2cm]

  %\draw[ultra thick,](0,-.08)--(0,1);
%\draw[ thick,->](0,-.08)--(0,1.1)--;
\draw[line width= .6 mm,->](1, -1)--(0,0)--(0,1);
\draw[red](-.03,-.03)--(-1,-1);
  \end{scope}

\node[] at (1.5,0){\Huge =};
  \end{tikzpicture}
  \captionof{figure}{The trivalent vertices of $\wTFe$.} \label{TrivVertex}
  \end{figure}

Generator 8 is a trivalent vertex of 1-dimensional strings in $\R^4$. Finally,  generator 9 is a ``mixed vertex'', in other words a one-dimensional string attached to the wall of a 2-dimensional tube. This is shown in Figure~\ref{TrivVertex}.

An important notion for later use is the {\em skeleton} of a w-foam. We give an intuitive definition here that is is sufficient for this paper; for a formal definition see \cite[Section 2.4]{WKO2}.
In general, viewing knotted objects as embeddings of circles, manifolds, graphs, etc, the skeleton is the embedded object without its embedding. In other words, the skeleton of a knotted object is obtained by allowing arbitrary crossing changes, or equivalently by replacing all crossings with ``virtual'' (or circuit) crossings. For example, the skeleton of an ordinary knot is a circle. The skeleton of the threaded sphere described above is the union of a sphere and an interval.

\subsubsection{The relations for $\wTFe$}\label{subsec:wrels}

 This section is a quick overview of the relations for $\wTFe$, which are described in detail in \cite[Section 4.5]{WKO2}.
The list of relations for $\wTFe$ is $\{$R$1^s$, R2, R3, R4, OC, CP$\}$;  Figure \ref{fig:R1OCCP} shows R$1^s$ and OC, and explains CP.
All relations have local 4-dimensional topological meaning, it is an instructive exercise to verify them. R$1^s$ stands for the weak (framed) version of the Reidemeister 1 move; R2 and R3 are the usual Reidemeister moves; and R4 allows moving a strand over or under a vertex.  OC stands for {\em Over-corssings Commute}, and CP for {\em Cap Pullout}. All relations should be interpreted in all sensible combinations of strand types: tube or string, and all orientations.

\begin{figure}
\begin{tikzpicture}[thick,scale=.65]
\draw (0,0) circle (.5cm);

\draw(0,1.5) circle (.5cm);

\draw[line width= 2.5 mm, white ](.5,-1) to [in=312, out=85](.357,.357);
\draw[->](.5,-1) to [in=310, out=85](.352,.352);

\draw(0,1.5) circle (.5cm);
\draw[white, fill=white] (.395,1.75) rectangle (1.25,1);
\draw[ yshift=1.5cm,-> ](.65,-1.5) to [in=310, out=65](.352,.352);

\draw[line width= 2 mm, white] (.357, 1.15) to [in=312, out=45] (.5,2.5);
\draw[->] (.357, 1.15) to [in=312, out=45] (.5,2.5);

\draw[->](2.5,-1)--(2.5,2.5);
\draw[<->, thin] (1.25,.75)--(2,.75);
\node[above] at (1.75,.75){\Small R$1^s$};

\end{tikzpicture}\hspace {1.5cm} \begin{tikzpicture}[thick,scale=.65]

%\draw[thin] (0,0) circle (.1cm);
\draw[thick] (-1.5,-1.5)--(1.5,1.5)(1.5,-1.5)--(-1.5,1.5);
\draw[line width= 2 mm,white] (0,-1.5) to [in=270, out=25] (1.25,0);
\draw[line width= 2 mm,white] (1.25,0) to  [in=335, out=90] (0,1.5);

\draw[thick] (0,-1.5) to [in=270, out=25] (1.25,0);
\draw[thick] (1.25,0) to  [in=335, out=90] (0,1.5);

\begin{scope}[xshift=5cm]
%\draw[thin] (0,0) circle (.1cm);
\draw[thick] (-1.5,-1.5)--(1.5,1.5)(1.5,-1.5)--(-1.5,1.5);
\draw[line width= 2 mm,white] (0,-1.5) to [in=270, out=155] (-1.25,0);
\draw[line width= 2 mm,white] (-1.25,0) to  [in=205, out=90] (0,1.5);

\draw[thick] (0,-1.5) to [in=270, out=155] (-1.25,0);
\draw[thick] (-1.25,0) to  [in=205, out=90] (0,1.5);

\end{scope}

\draw[<->, thin] (1.75,0)--(3.25,0);
\node[above] at (2.5,0){\Small OC};

\end{tikzpicture}

%-----------------------------------------------------------------------------CP diagrams------------------------------------------

\begin{tikzpicture}[scale=.6]

\begin{scope}[thick]
\draw[thick](0,.75)--(0,-1.25);
\draw[fill=black] (0,.75) circle (.2cm);

\draw[line width= 2 mm,white](-1,0)--(1,0)(3,0)--(5,0);
\draw[thick](-1,0)--(1,0)(3.5,0)--(5.5,0);

\draw[thick](4.5,-.25)--(4.5,-1.25);
\draw[fill=black] (4.5,-.45) circle (.2cm);

\draw[<->, thin] (1.75,0)--(3.25,0);
\node[above] at (2.5,0){\Small CP};
\end{scope}

\begin{scope}[xshift=-.5cm]
%------------Draw tube--------------------------------------

%------------inserted new------------------------------------------------

\begin{scope}[xshift=1.5cm]
%------------Draw big cylinder--------------------------------------
\begin{scope}[xshift=8cm]

\begin{scope}[yshift=1cm]

    \draw[] (0,0) ellipse (.75cm and .2cm);

\end{scope}
         %----cylinder part-------
\begin{scope}[yshift=-2cm]
\begin{scope}
    \clip (-1,0) rectangle (1,1);
\draw[ dashed] (0,0) ellipse (.75cm and .2cm);
\end{scope}

      % Draw lower half of ellipse
\begin{scope}
    \clip (-1,0) rectangle (1,-1);
    \draw[] (0,0) ellipse (.75cm and .2cm);
\end{scope}

\end{scope}

\draw[](.75,-2)--(.75,1);

        %----end cylinder part-----

\end{scope}
%------------end big cylinder--------------------------------------

%----ellipses in intersection-------
\draw[] (8.45,-.75) ellipse (.1cm and .45cm);
\draw[dashed,thin] (8.45,-.75) ellipse (.04cm and .3cm);

\draw[white, fill=white, xshift=8.58cm, yshift=-.95cm] (0,0) rectangle (-.05,.1);
\draw[white, fill=white,xshift=8.58cm, yshift=-.7cm] (0,0) rectangle (-.05,.1);

\draw[white, fill=white,xshift=8.77cm, yshift=-.75cm] (0,0) rectangle (-.05,.1);
\draw[white, fill=white,xshift=8.77cm, yshift=-.97cm] (0,0) rectangle (-.05,.1);

%---end ellipses----------

%-------draw little tube ---------------
\begin{scope}[xshift=10cm, yshift=-2cm]
\begin{scope}
    \clip (-1,0) rectangle (1,1);
\draw[ dashed] (0,0) ellipse (.5cm and .1cm);
\end{scope}

      % Draw lower half of ellipse
\begin{scope}
    \clip (-1,0) rectangle (1,-1);
    \draw[] (0,0) ellipse (.5cm and .1cm);
\end{scope}

\draw[line width= .7 mm, white] (-.5,0) to [in=350, out=110](-1.55,.9);
\draw[line width= .7 mm, white] (.5,0) to [in=350, out=110] (-1.55,1.6);
\draw[] (-.5,0) to [in=350, out=110](-1.6,.9);
\draw[] (.5,0) to [in=350, out=110] (-1.6,1.6);

\draw[dashed] (-2.4,1.6) ellipse (.07cm and .4cm);

\draw[dashed] (-1.6,1.6)to [in=-20, out=170](-2.4,2) ;
\draw[dashed] (-1.6,.9)to [in=-20, out=170](-2.4,1.2) ;

\draw[dashed] (-2.6,1.7) ellipse (.07cm and .4cm);

\draw[] (-2.6,2.1)to [in=290, out=170](-3.51,2.7) ;
\draw[] (-2.6,1.3)to [in=290, out=170](-4.49,2.7) ;

\end{scope}
%-------end little tube ------------------

%------------Draw cap outside--------
\begin{scope}[xshift=7.75cm,scale=.7]

%draw cap
\begin{scope}[xshift=-2.5cm,yshift=1cm]
\begin{scope}
    \clip (-1,0) rectangle (1,2);
\draw[] (0,0) ellipse (.7cm and .45cm);
\end{scope}

\begin{scope}
    \clip (-1,0) rectangle (1,1);
\draw[ dashed] (0,0) ellipse (.7cm and .1cm);
\end{scope}

      % Draw lower half of ellipse
\begin{scope}
    \clip (-1,0) rectangle (1,-1);
    \draw[] (0,0) ellipse (.7cm and .1cm);
\end{scope}

\end{scope}

\end{scope}
%------------end of cap outside------
\begin{scope}[xshift=8cm]
\draw[white,line width= .8 mm](-.75,-1.5)--(-.75,.75);
\draw[](-.75,-2)--(-.75,1);
\end{scope}

%------------end inserted new------------------------------------------------

%-----after picture-----------------------------------------------------

%----big cylinder------
\begin{scope}[xshift=13cm]

\begin{scope}[yshift=1cm]

    \draw[] (0,0) ellipse (.75cm and .2cm);

\end{scope}
         %----cylinder part-------
\begin{scope}[yshift=-2cm]
\begin{scope}
    \clip (-1,0) rectangle (1,1);
\draw[ dashed] (0,0) ellipse (.75cm and .2cm);
\end{scope}

% Draw lower half of ellipse
\begin{scope}
    \clip (-1,0) rectangle (1,-1);
    \draw[] (0,0) ellipse (.75cm and .2cm);
\end{scope}

\end{scope}
\draw[](-.75,-2)--(-.75,1)(.75,-2)--(.75,1);
%----end big cylinder part-----

%------------Draw capped tube--------
\begin{scope}[xshift=1.75cm,yshift=-1cm]
%draw cap
\begin{scope}[yshift=.25cm]
\begin{scope}
    \clip (-1,0) rectangle (1,2);
\draw[] (0,0) circle (.5cm);
\end{scope}
%draw full dashed ellipse

\begin{scope}
    \clip (-1,0) rectangle (1,1);
\draw[ dashed] (0,0) ellipse (.5cm and .1cm);
\end{scope}

% Draw lower half of ellipse
\begin{scope}
    \clip (-1,0) rectangle (1,-1);
    \draw[] (0,0) ellipse (.5cm and .1cm);
\end{scope}
\end{scope}

         %----cylinder part-------
\begin{scope}[yshift=-1cm]
\begin{scope}
    \clip (-1,0) rectangle (1,1);
\draw[ dashed] (0,0) ellipse (.5cm and .1cm);
\end{scope}

% Draw lower half of ellipse
\begin{scope}
    \clip (-1,0) rectangle (1,-1);
    \draw[] (0,0) ellipse (.5cm and .1cm);
\end{scope}

\end{scope}
\draw[](-.5,-1)--(-.5,.25)(.5,-1)--(.5,.25);
        %----end cylinder part-----

\end{scope}
%------------end of capped tube------

\draw[<->, thin] (-1.75,0)--(-3.25,0);
\node[above] at (-2.5,0){\Small CP};

\end{scope}

\end{scope} %xshift

\end{scope}%entire tube shift

%---------------------------end tube part-----------------------------------

\begin{scope}[xshift=1.5cm]
%------------circle drawing---------------------------------
\begin{scope}[xshift=18cm, yshift=0cm, scale=1.25]

\draw[blue] (1.2,-1.6) ellipse (.4cm and .07cm); %bottom blue
\draw[purple] (0,-1.6) ellipse (.5cm and .1cm);

\draw[blue] (.2,-1.1) ellipse (.4cm and .066cm);%2nd
\draw[line width= .6 mm, white] (0,-1) ellipse (.5cm and .1cm);
\draw[purple] (0,-1) ellipse (.5cm and .1cm);

\draw[blue] (0,-.5) ellipse (.35cm and .05cm);%3rd
\draw[line width= .6 mm, white] (0,-.5) ellipse (.5cm and .1cm);
\draw[purple] (0,-.5) ellipse (.5cm and .1cm);

\draw[ purple] (0,-.1) ellipse (.5cm and .1cm);
\draw[line width= .5 mm, white] (-.2,-.02) ellipse (.35cm and .06cm);
\draw[blue] (-.2,-.02) ellipse (.35cm and .06cm);%4th

\draw[blue] (-.9,.3) ellipse (.25cm and .03cm);%5th
\draw[ purple] (0,.3) ellipse (.5cm and .1cm);

\draw[blue] (-1,.65) ellipse (.1cm and .02cm);%6th
\draw[ purple ] (0,.65) ellipse (.5cm and .1cm);

\end{scope} %xshift

\begin{scope}[xshift=22cm, yshift=0cm, scale=1.25]
\draw[blue] (1.3,-1.5) ellipse (.5cm and .1cm); %1
\draw[blue] (1.3,-1) ellipse (.35cm and .066cm);%2
%\draw[blue] (1.3,-.55) ellipse (.35cm and .06cm);%3
\draw[blue] (1.3,-.55) ellipse (.25cm and .05cm);%4
\draw[blue] (1.3,-.1) ellipse (.08cm and .02cm);%5

\draw[purple] (0,-1.5) ellipse (.5cm and .1cm); %1
\draw[purple] (0,-1) ellipse (.5cm and .1cm);%2
\draw[purple] (0,-.55) ellipse (.5cm and .1cm);%3
\draw[ purple] (0,-.1) ellipse (.5cm and .1cm);%4
\draw[ purple] (0,.3) ellipse (.5cm and .1cm);%5
\draw[ purple ] (0,.65) ellipse (.5cm and .1cm);%6

\end{scope}%xshift

\draw[<->, thin] (19.2,0)--(20.8,0);
\node[above] at (20,0){\Small CP};

%----------end circles--------------------
\end{scope}
\end{tikzpicture}
\captionof{figure}{The relations $R1^s$ and $OC$ are shown.  $CP$ is explained with broken surface diagrams and as a movie of flying circles.}\label{fig:R1OCCP}
\end{figure}
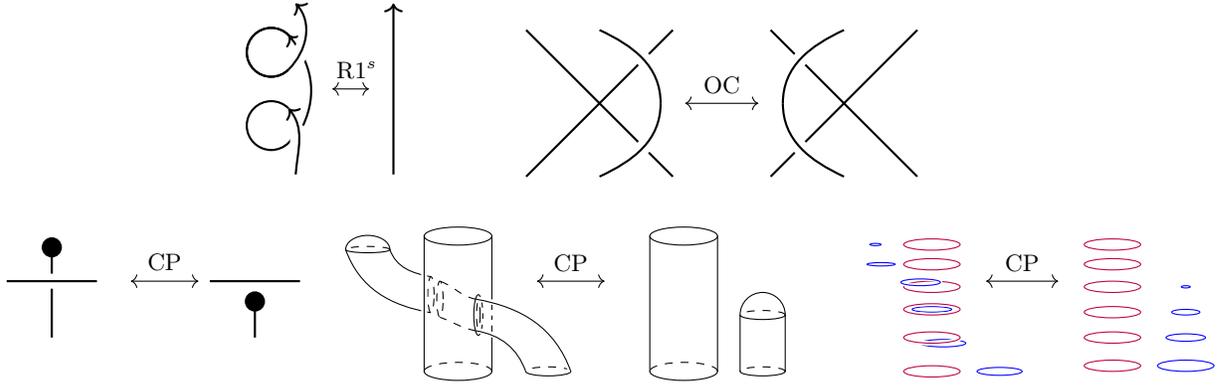

Note that all relations are circuit algebra relations. For example, the relation OC is understood as a relationship between two specific circuit diagram compositions of \lcross \hspace{2mm} and \rcross , as shown in Figure~\ref{CAOC}.

\begin{figure}
\begin{tikzpicture}
\draw[] (0,0) circle (1.5cm);
\draw[](.5, .5)circle (.35cm);
\draw[](.5,-.5)circle (.35cm);

\draw[ultra thick, gray](1.3,0.75)to [in=45,out=160](.75,0.75);%1 to top 1
\draw[ultra thick, gray](0,1.5)to [in=135,out=270](.25,.75);%2 to top 2
\draw[ultra thick, gray](-1.3,.75)to [in=135,out=315](.25,-.25);%3 to bot 2
\draw[ultra thick, gray](-1.3,-.75)to [in=225,out=45](.25,.25);%4 to top 3
\draw[ultra thick, gray](0,-1.5)to [in=225,out=90](.25,-.75);%5 to bot 3
\draw[ultra thick, gray](1.3,-.75)to [in=315,out=200](.75,-0.75);%6 to bot 4
\draw[ultra thick, gray](.75,.25)to [in=45,out=315](.75,.-.25) ;%top 4 to bot 1

\draw[xshift=.1cm, yshift=.11cm,thick, gray,->] (0,0)--(.01,.01);
\draw[xshift=1.05cm, yshift=.82cm,thick, gray,->] (0,0)--(.01,-.003);
\draw[xshift=.03cm, yshift=1.1cm,thick, gray,->] (0,0)--(-.005,.01);
\draw[xshift=-1cm, yshift=.51cm,thick, gray,->] (0,0)--(-.01,.01);
\draw[xshift=-1cm, yshift=-.51cm,thick, gray,->] (0,0)--(.01,.01);
\draw[xshift=.07cm, yshift=-1cm,thick, gray,->] (0,0)--(.01,.02);
\draw[xshift=.85cm, yshift=.11cm,thick, gray,->] (0,0)--(.00,.01);
\draw[xshift=1cm, yshift=-.85cm,thick, gray,->] (0,0)--(-.01,-.005);

%----lcross
\begin{scope}[xshift=.5cm,yshift=.5cm,scale=.23]

\draw[thick, ->](1,-1)--(-1.1,1.1);
\draw[ ultra thick](-1,-1)--(-.2,-.2);
\draw[ultra thick] (.2,.2)--(1,1);
\draw[ thick,->] (.2,.2)--(1.1,1.1);

\draw[line width=2.5mm,white] (.5,-.5)--(-.5,.5);
\draw[ultra thick](1,-1)--(-1,1);

\end{scope}
%----------------endlcross

\begin{scope}[xshift=.5cm,yshift=-.5cm, scale=.23]
\draw[ thick,->](-1,-1)--(1.1,1.1);
\draw[ultra thick](1,-1)--(.2,-.2);
\draw[ultra thick] (-.2,.2)--(-1,1);
\draw[ thick,->] (-.2,.2)--(-1.1,1.1);

\draw[line width=2.5mm,white] (-.5,-.5)--(.5,.5);
\draw[ ultra thick](-1,-1)--(1,1);

\end{scope}

%-------------------right circle------------------------------------------
\begin{scope}[xshift=4cm]
\draw[] (0,0) circle (1.5cm);
\draw[](-.5, .5)circle (.35cm);
\draw[](-.5,-.5)circle (.35cm);

\draw[ultra thick, gray](1.3,0.75)to[in=45,out=225](-.25,.-.25);%1 to bot 1
\draw[ultra thick, gray](0,1.5)to[in=45,out=270](-.25,0.75);%2 to top 1
\draw[ultra thick, gray](-1.3,.75)to[in=135,out=30](-.75,.75);%3 to top 2
\draw[ultra thick, gray](-1.3,-.75)to[in=225,out=-30](-.75,-.75);%4 to bot 3
\draw[ultra thick, gray](0,-1.5)to[in=315,out=90](-.25,-0.75);%5 to bot 4
\draw[ultra thick, gray](1.3,-.75)to[in=315,out=135](-.25,.25);%6 to top 4
\draw[ultra thick, gray](-.75,.25)to[in=135,out=225](-.75,.-.25) ;%top 3 to bot 2

\draw[xshift=-.1cm, yshift=-.11cm,thick, gray,->] (0,0)--(.01,.01);
\draw[xshift=1cm, yshift=.5cm,thick, gray,->] (0,0)--(.01,.01);
\draw[xshift=1cm, yshift=-.5cm,thick, gray,->] (0,0)--(-.01,.01);
\draw[xshift=-.15cm, yshift=.15cm,thick, gray,->] (0,0)--(-.01,.01);
\draw[xshift=-.03cm, yshift=1.2cm,thick, gray,->] (0,0)--(.001,.01);
\draw[xshift=-.025cm, yshift=-1.2cm,thick, gray,->] (0.001,0)--(.001,.01);
\draw[xshift=-.85cm, yshift=0cm,thick, gray,->] (0.001,0)--(.001,.01);
\draw[xshift=-1cm, yshift=-.85cm,thick, gray,->] (0,0)--(.03,-.01);
\draw[xshift=-1cm, yshift=.85cm,thick, gray,->] (0,0)--(-.03,.01);

%----lcross
\begin{scope}[xshift=-.5cm,yshift=-.5cm,scale=.23]

\draw[thick, ->](1,-1)--(-1.1,1.1);
\draw[ ultra thick](-1,-1)--(-.2,-.2);
\draw[ultra thick] (.2,.2)--(1,1);
\draw[ thick,->] (.2,.2)--(1.1,1.1);

\draw[line width=2.5mm,white] (.5,-.5)--(-.5,.5);
\draw[ultra thick](1,-1)--(-1,1);
\end{scope}
%----------------endlcross

%---------- rcross
\begin{scope}[xshift=-.5cm,yshift=.5cm, scale=.23]

\draw[ thick,->](-1,-1)--(1.1,1.1);
\draw[ultra thick](1,-1)--(.2,-.2);
\draw[ultra thick] (-.2,.2)--(-1,1);
\draw[ thick,->] (-.2,.2)--(-1.1,1.1);

\draw[line width=2.5mm,white] (-.5,-.5)--(.5,.5);
\draw[ ultra thick](-1,-1)--(1,1);
\end{scope}

%-----end rcross-------

\end{scope} %xshift for right circle

\node[]at (2,0){\Huge =};
\node[above ]at (2,.2){\Small OC};

\end{tikzpicture}
\captionof{figure}{The OC relation written as a circuit algebra relation between two crossings.}\label{CAOC}
\end{figure}
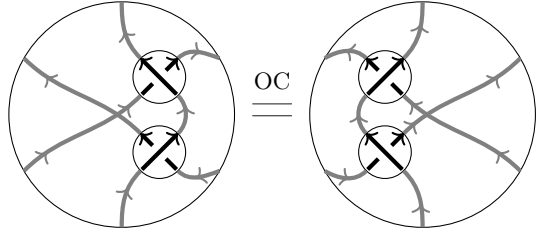

The circuit algebra $\wTFe$ is conjectured to be  a Reidemeister theory for \textit{ribbon} knotted tubes in $\R^4$ with caps, singular foam vertices and strings.  Here \textit{ribbon} means that the tubes have ``filling'' in $\mathbb{R}^4$ with only restricted types of singularities, for details see \cite[Section 2.2.2]{WKO1}. All the relations represent local topological statements: for example, Reidemeister 2 with a thin red bottom strand holds because the movie consisting of a point flying in through a circle and then immediately flying back out is isotopic to the movie in which the point and circle stay in place. However, it is an open question whether the known relations are sufficient. A similar Reidemeister theory has been proven for w-braids,which exhibits a simpler structure than $\wTFe$: \cite[Proposition 3.3]{BH} and \cite{Gol, Satoh}. For an explanation of the difficulties that arise for knots and tangles, see \cite[Introduction]{WKO2}.

\subsubsection{The operations on $\wTFe$ }\label{subsec:wops}
 In addition to the circuit algebra structure, $\wTFe$ is equipped with a set of auxiliary operations. Of these, in this paper we only use \textit{disc unzip}.

 The \textit{disc unzip operation} $u_e$ is defined for a capped strand labeled by $e$. Using the blackboard framing, $u_e$ doubles the capped strand $e$ and then attaches the ends of the doubled strand to the connecting ones, as shown Figure \ref{fig:unzip}.

 Topologically, the blackboard framing of the diagram induces a framing of the corresponding tubes and discs in $\R^4$ via Satoh's tubing map \cite[Section 3.1.1]{WKO1} and \cite{Satoh}. Briefly, each point of a (thick black) strand represents a circle in $\R^4$ via the tubing map, and the blackboard framing induces a "companion circle" (not linked with the original circle). A framed tube in $\R^4$ can be understood as a movie of flying circles in $\R^4$ with companions. Unzip is the operation ``pushing each circle off of itself slightly in the direction of the companion circles''. See also \cite[Section 4.1.3]{WKO2} for details on framings and unzips.

A related operation not strictly necessary for this paper, {\it strand unzip}, is defined for strands which end in two vertices of opposite signs, as shown in the right of Figure \ref{fig:unzip}.  For the interested reader a detailed definition of crossing and vertex signs is in \cite[Sections 3.4 and 4.1]{WKO2}. Strand unzip doubles the strand in the direction of the blackboard framing, and connects the ends of the doubled strands to the corresponding edge strands. Topologically, strand unzip pushes the tube off in the direction of the blackboard framing, as before.

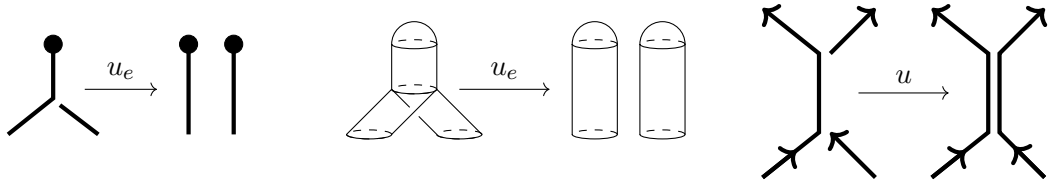
\begin{figure}
\begin{tikzpicture}[scale=.6]

\filldraw[fill=black, draw=black] (0,1) circle (2mm) ;
\draw[ultra thick](-1,-1)--(0,-.2)--(0,1) (.15,-.35)--(1,-1);

\draw[->] (.7,0)--node[above]{$u_e$}(2.3,0);
\begin{scope}[xshift=3cm]

\filldraw[fill=black, draw=black] (0,1) circle (2mm) ;
\draw[ultra thick](0,-1)--(0,1);

\filldraw[fill=black, draw=black] (1,1) circle (2mm) ;
\draw[ultra thick](1,-1)--(1,1);

\end{scope}

%------capped lambda--------------------
\begin{scope}[xshift=8cm]
%draw cap
\begin{scope}[yshift=1cm]
\begin{scope}
    \clip (-1,0) rectangle (1,2);
\draw[] (0,0) circle (.5cm);
\end{scope}
%draw full dashed ellipse

\begin{scope}
    \clip (-1,0) rectangle (1,1);
\draw[ dashed] (0,0) ellipse (.5cm and .1cm);
\end{scope}

% Draw lower half of ellipse
\begin{scope}
    \clip (-1,0) rectangle (1,-1);
    \draw[] (0,0) ellipse (.5cm and .1cm);
\end{scope}

\end{scope}

%draw full dashed ellipse
\draw[ dashed] (0,0) ellipse (.5cm and .1cm);
% Draw lower half of ellipse
\begin{scope}
    \clip (-1,0) rectangle (1,-.6);
    \draw[] (0,0) ellipse (.5cm and .1cm);
\end{scope}
%draw full dashed ellipse
\draw[ dashed] (1,-1) ellipse (.5cm and .1cm);
% Draw lower half of ellipse
\begin{scope}
    \clip (0,-1) rectangle (2,-1.6);
    \draw[] (1,-1) ellipse (.5cm and .1cm);
\end{scope}
%draw full dashed ellipse
\draw[ dashed] (-1,-1) ellipse (.5cm and .1cm);
% Draw lower half of ellipse
\begin{scope}
    \clip (-2,-1) rectangle (0,-1.6);
    \draw[] (-1,-1) ellipse (.5cm and .1cm);
\end{scope}
\draw[] (-.5,1)--(-.5,0)--(-1.5,-1);
\draw[] (.5,1)--(.5,0)--(1.5,-1);
\draw[] (-.5,-1)--(.5,0);

\draw[] (-.1,-.4)--(-.5,0);
\draw[] (.1,-.6)--(.5,-1);

\end{scope}

%--------- end capped lambda---------
\draw[->] (9,0)--node[above]{$u_e$}(11,0);
%------------Draw capped tube--------
\begin{scope}[xshift=12cm]
%draw cap
\begin{scope}[yshift=1cm]
\begin{scope}
    \clip (-1,0) rectangle (1,2);
\draw[] (0,0) circle (.5cm);
\end{scope}
%draw full dashed ellipse

\begin{scope}
    \clip (-1,0) rectangle (1,1);
\draw[ dashed] (0,0) ellipse (.5cm and .1cm);
\end{scope}

% Draw lower half of ellipse
\begin{scope}
    \clip (-1,0) rectangle (1,-1);
    \draw[] (0,0) ellipse (.5cm and .1cm);
\end{scope}
\end{scope}
         %----cylinder part-------
\begin{scope}[yshift=-1cm]
\begin{scope}
    \clip (-1,0) rectangle (1,1);
\draw[ dashed] (0,0) ellipse (.5cm and .1cm);
\end{scope}

% Draw lower half of ellipse
\begin{scope}
    \clip (-1,0) rectangle (1,-1);
    \draw[] (0,0) ellipse (.5cm and .1cm);
\end{scope}

\end{scope}
\draw[](-.5,-1)--(-.5,1)(.5,-1)--(.5,1);
        %----end cylinder part-----

\end{scope}
%------------end of capped tube------

%------------Draw capped tube--------
\begin{scope}[xshift=13.5cm]
%draw cap
\begin{scope}[yshift=1cm]
\begin{scope}
    \clip (-1,0) rectangle (1,2);
\draw[] (0,0) circle (.5cm);
\end{scope}
%draw full dashed ellipse

\begin{scope}
    \clip (-1,0) rectangle (1,1);
\draw[ dashed] (0,0) ellipse (.5cm and .1cm);
\end{scope}

% Draw lower half of ellipse
\begin{scope}
    \clip (-1,0) rectangle (1,-1);
    \draw[] (0,0) ellipse (.5cm and .1cm);
\end{scope}
\end{scope}
         %----cylinder part-------
\begin{scope}[yshift=-1cm]
\begin{scope}
    \clip (-1,0) rectangle (1,1);
\draw[ dashed] (0,0) ellipse (.5cm and .1cm);
\end{scope}

% Draw lower half of ellipse
\begin{scope}
    \clip (-1,0) rectangle (1,-1);
    \draw[] (0,0) ellipse (.5cm and .1cm);
\end{scope}

\end{scope}
\draw[](-.5,-1)--(-.5,1)(.5,-1)--(.5,1);
        %----end cylinder part-----

\end{scope}
%------------end of capped tube------

\end{tikzpicture}\hspace{7mm}\begin{tikzpicture}[scale=.75]
\draw[ultra thick, ->] (-1,-1)--(0,-.2)--(0,1.2)--(-1,2);
\draw[ultra thick, ->] (1,-1)--(.2,-.2);
\draw[ultra thick, ->] (.2,1.2)--(1,2);
\draw[ultra thick, ->] (-1,-1)--(-.4,-.52);

\draw[->] (.7,.5)--node[above]{$u $}(2.3,.5);
\node[] at (-.5,.5) {$$};

\begin{scope}[xshift=3cm]
\draw[ultra thick, ->] (-1,-1)--(0,-.2)--(0,1.2)--(-1,2);
\draw[ultra thick, ->] (1,-1)--(.2,-.2)--(.2,1.2)--(1,2);
\draw[ultra thick, ->] (-1,-1)--(-.4,-.52);
\draw[ultra thick, ->] (1,-1)--(.45,-.45);
\end{scope}

\end{tikzpicture}

\captionof{figure}{Disc unzip on the left and middle, strand unzip on the right.}\label{fig:unzip}

\end{figure}

\subsection{Interpreting ``$1+1=2$'' in w-foams}\label{eqfoams}
The threaded sphere of Section~\ref{sec:abacus} can be described in $\wTFe$ by the diagram \threadedsphere , since a doubly capped tube is in fact a sphere.
Recall that the ``4D abacus'' interpretation of ``$1+1=2$''  is $\mathbbm{1} \# \mathbbm{1}=\Delta_{\sphere} \mathbbm{1}$, where $\mathbbm{1}$ is the threaded sphere, and $\Delta_{\sphere}$  is the doubling of the sphere along a framing.

The connected sum $\#$ operation for the threaded sphere is the circuit algebra composition shown in Figure \ref{fig:consumdubsphere}.
Doubling strands is realized in $\wTFe$ using the unzip operation. However, since the unzip operations in $\wTFe$ require an unzipped strand to end in either a vertex and a cap, or two vertices, we need to define a new {\em sphere unzip} operation, which doubles a twice-capped strand) along the blackboard framing, also shown in Figure~\ref{fig:consumdubsphere}. In Section~\ref{sec:arrows} we will need to show that the {\em homomorphic expansion} of $\wTFe$ also respects this operation.
To summarize, the topological statement ``$1+1=2$'' expressed in $\wTFe$ is shown in Figure~\ref{fig:topstate}.

\begin{figure}
\begin{tikzpicture}[scale=.75]
\draw[] (-5.75,1.75) rectangle (5.75,-2.25);

\node[] at (0,1) {\textbf{Topological Statement}};
\node[] at (4.5,0) {or};
%\node[] at (1.25,-1.5) {\small w-Foams};

%-----------------------------full sphere equations---------------------------------------
\begin{scope}[xshift=-3.1cm,yshift=0cm]
\begin{scope}[scale=.4]

\draw[line width= .2 mm] (0,0) circle (1cm);
\begin{scope}
    \clip (-1,0) rectangle (1,1);
\draw[line width= .2 mm, dotted] (0,0) ellipse (1cm and .3cm);
\end{scope}
\begin{scope}
    \clip (-.64,0) rectangle (1,-1);
    \draw[line width= .2 mm] (0,0) ellipse (1cm and .3cm);
\end{scope}
\begin{scope}
    \clip (-1,1) rectangle (1,-1);
    \draw[line width= .2 mm] (-.8,0) ellipse (.2cm and .45cm);
\end{scope}

\draw [white,fill=white]  (-1.05,.15) rectangle (-.97,-.15);
\draw[red,  ] (-2,0)--(-.63,0)(-.57,0)--(.8,0)(1.2,0)--(2,0);

\node[] at (2.7,0){ $\#$};

\begin{scope}[xshift=5.5cm]
\draw[line width= .2 mm] (0,0) circle (1cm);
\begin{scope}
    \clip (-1,0) rectangle (1,1);
\draw[line width= .2 mm, dotted] (0,0) ellipse (1cm and .3cm);
\end{scope}
\begin{scope}
    \clip (-.64,0) rectangle (1,-1);
    \draw[line width= .2 mm] (0,0) ellipse (1cm and .3cm);
\end{scope}
\begin{scope}
    \clip (-1,1) rectangle (1,-1);
    \draw[line width= .2 mm] (-.8,0) ellipse (.2cm and .45cm);
\end{scope}

\draw [white,fill=white]  (-1.05,.15) rectangle (-.97,-.15);
\draw[red,  ] (-2,0)--(-.63,0)(-.57,0)--(.8,0)(1.2,0)--(2,0);

\end{scope}

\node[] at (9,0){ $=$};

\end{scope}

\end{scope}

\begin{scope}[xshift=-6.5cm,yshift=0cm]
\begin{scope}[xshift=9cm,yshift=0cm, scale=.4]
%\node[] at (-4.5,0) {$=$};
%\node at (-3,0) {\huge$\Delta$};
\begin{scope}[xshift=-3cm]\deltasphere\end{scope}

\draw[line width= .2 mm] (0,0) circle (1cm);
\begin{scope}
    \clip (-1,0) rectangle (1,1);
\draw[line width= .2 mm, dotted] (0,0) ellipse (1cm and .3cm);
\end{scope}
\begin{scope}
    \clip (-.64,0) rectangle (1,-1);
    \draw[line width= .2 mm] (0,0) ellipse (1cm and .3cm);
\end{scope}
\begin{scope}
    \clip (-1,1) rectangle (1,-1);
    \draw[line width= .2 mm] (-.8,0) ellipse (.2cm and .45cm);
\end{scope}

\draw [white,fill=white]  (-1.05,.15) rectangle (-.97,-.15);
\draw[red,  ] (-2,0)--(-.63,0)(-.57,0)--(.8,0)(1.2,0)--(2,0);
\end{scope}

%---------------------- end full sphere eq-----------------------------------------------
\end{scope}

%----------------------------- wfoam eq -------------------------------------------------------
\begin{scope}[xshift=-2.05cm,yshift=-1.3cm]
\begin{scope}[scale=.35]

\begin{scope}[xshift=-1.5cm]
\draw[thick](0,-1)--(0,1);
\draw[red](-1,0)--(-.4,0);
\draw[red] (.4,0)--(1,0);
\filldraw[fill=black, draw=black] (0,1) circle (2mm) ;
\filldraw[fill=black, draw=black] (0,-1) circle (2mm) ;
\end{scope}
\node[] at (1,0){ $\#$};

\begin{scope}[xshift=3.5cm]
\draw[thick](0,-1)--(0,1);
\draw[red](-1,0)--(-.4,0);
\draw[red] (.4,0)--(1,0);
\filldraw[fill=black, draw=black] (0,1) circle (2mm) ;
\filldraw[fill=black, draw=black] (0,-1) circle (2mm) ;

\end{scope}

\node[] at (7.3,0){ $=$};
\begin{scope}[xshift=10cm]
\node at (0,0) {\huge $\Delta$};
\begin{scope}[xshift=1.3cm, yshift=-1.1cm, scale=.5]
\draw[thick](0,-1)--(0,1);
\filldraw[fill=black, draw=black] (0,1) circle (3mm) ;
\filldraw[fill=black, draw=black] (0,-1) circle (3mm) ;
\end{scope}

\end{scope}

\begin{scope}[xshift=13.0cm]

\draw[thick](0,-1)--(0,1);
\draw[red](-1,0)--(-.4,0);
\draw[red] (.4,0)--(1,0);
\filldraw[fill=black, draw=black] (0,1) circle (2mm) ;
\filldraw[fill=black, draw=black] (0,-1) circle (2mm) ;
\end{scope}

\end{scope}
\end{scope}

\end{tikzpicture}
 \captionof{figure}{The topological statement in w-foams: the connected sum of two threaded spheres along the threads is the same as the sphere unzip of a threaded sphere.}\label{fig:topstate}
 \end{figure}
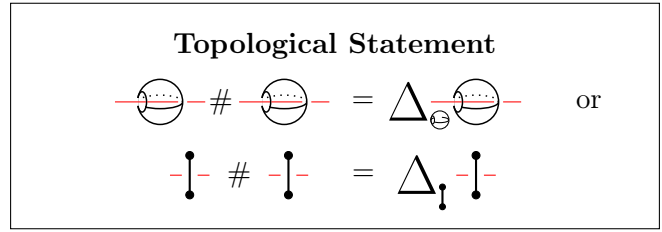

\section{Understanding the Diagramatic Statement}\label{sec:arrows}

 \subsection{The associated graded structure}
Let $\mathbb K$ be a field of characteristic zero. Extend $\wTFe$ by allowing formal linear combinations of w-foams of the same skeleton. As in \cite{WKO2, WKO3}, this algebraic structure -- whose operations consist of formal linear combinations, circuit algebra connections and unzips -- is filtered by powers of its augmentation ideal. Its associated graded structure, denoted $\calA$, is a space of {\em arrow diagrams on foam skeleta}. We will introduce arrow diagrams; for general background on the filtration and associated graded structure, see \cite[Section 2.2]{WKO2}; for the application to w-foams see \cite[Section 4.2]{WKO2}.

An {\em arrow diagram on a w-foam skeleton}, aka {\em Jacobi diagram}, consists of a w-foam skeleton (as defined in Section~\ref{subsec:wgens}), along with a uni-trivalent directed {\em arrow graph} with the following properties:
\begin{itemize}
\item univalent vertices are attached to the skeleton,
\item trivalent vertices are equipped with a cyclic orientation, and
\item each trivalent vertex is required to have two incoming arrows and one outgoing, this is referred to as the {\em two-in-one-out} rule.
\end{itemize}

By ``an arrow diagram'' we usually mean a formal $\mathbb K$-linear combination of arrow diagrams on the same skeleton.  An example is shown in Figure~\ref{fig:ArrowDiagram}; in figures the arrow graphs will be drawn in dotted lines. Arrow diagrams are combinatorial, not topological, objects, in other words it does not matter exactly how the arrow graph is drawn in the plane or where the univalent ends attach to the skeleton strands, only the order in which they are attached.

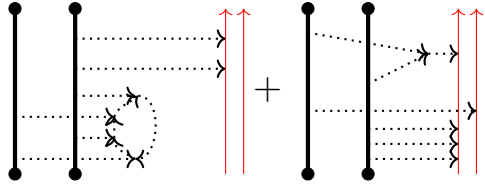
\begin{figure}
\begin{tikzpicture} [scale=.8]
\draw[ultra thick](-.5,-.25)--(-.5,2.5);
\draw[fill=black] (-.5,-.25) circle (.1cm);
\draw[fill=black] (-.5,2.5) circle (.1cm);
\draw[red,->](2,-.25)--(2,2.5);
\draw[red,->](2.3,-.25)--(2.3,2.5);

\begin{scope}[xshift=-1cm]
\draw[ultra thick](-.5,-.25)--(-.5,2.5);
\draw[fill=black] (-.5,-.25) circle (.1cm);
\draw[fill=black] (-.5,2.5) circle (.1cm);
\end{scope}

\draw[ ->,thick,dotted ] (-1.5,0)--(.5,0);
\draw[->,thick,dotted ] (-.5,.35)--(0.15,.35);
\draw[->,thick,dotted ] (-1.5,.7)--(.15,.7);
\draw[->,thick,dotted ] (-.5,1.05)--(.5,1.05);

\draw[->,thick,dotted ] (.5,0)to [out=150, in=270] (.15,.35);
\draw[->,thick,dotted ] (.15,.35)--(.15,.7);
\draw[->,thick,dotted ] (.15,.7) to [out=70, in=220] (.55,1.05);
\draw[->,thick,dotted ] (.55,1.05) to [out=0, in=0] (.5,0);

\draw[->,thick,dotted ] (-.5,1.5)--(2,1.5);
\draw[->,thick,dotted ] (-.5,2)--(2,2);
\node[]at (2.7,1.15){\Large $+$};
\end{tikzpicture}  \begin{tikzpicture} [scale=.8]
\draw[ultra thick](-.5,-.25)--(-.5,2.5);
\draw[fill=black] (-.5,-.25) circle (.1cm);
\draw[fill=black] (-.5,2.5) circle (.1cm);
\draw[red,->](2,-.25)--(2,2.5);
\draw[red,->](2.3,-.25)--(2.3,2.5);
%\draw[red,->](2.6,-.25)--(2.6,2.5);

\begin{scope}[xshift=1cm]
\draw[ultra thick](-.5,-.25)--(-.5,2.5);
\draw[fill=black] (-.5,-.25) circle (.1cm);
\draw[fill=black] (-.5,2.5) circle (.1cm);
\end{scope}

\draw[->,thick,dotted ] (-.5,.8)--(2.3,.8);

\draw[->,thick,dotted ] (.5,.5)--(2,.5);
\draw[->,thick,dotted ] (.5,.25)--(2,.25);
\draw[->,thick,dotted ] (.5,0)--(2,0);

\begin{scope}[yshift=.5cm]
\draw[->,thick, dotted ] (1.5,1.25)--(2,1.25);
\draw[->,thick,dotted ] (-.5,1.6)--(1.5,1.25);
\draw[->,thick,dotted ] (.5,.75)--(1.5,1.25);
\end{scope}

\end{tikzpicture}
\caption{A linear combination of arrow diagrams on a w-foam skeleton.}\label{fig:ArrowDiagram}
\end{figure}

The space $\calA$ consists of linear combinations of arrow diagrams as explained above, modulo a number of relations: $\aSTU$, $\aVI$, $RI$, $CP$ and $TF$. The $TF$ relation stands for Tails Forbidden on strings, and means just that: any diagram with an arrow tail ending on a red string is set to zero. The relations $RI$, $CP$ and $VI$ are shown in Figure~\ref{fig:RICPTF}. The acronym $RI$ stands for Rotation Invariance, this is the diagrammatic incarnation of the $R1^s$ relation; $CP$ stands for Cap Pull-out and comes from the identically named relation of w-foams; $VI$ stands for Vertex Invariance and has a less obvious topological explanation along the lines of the $4T$ relation of classical Vassiliev theory \cite{BN1}.

The $\aSTU$ relation is in fact a group of three relations, shown in Figure~\ref{fig:ASIHX}, and again analogous to the $STU$ relation of Vassiliev fame \cite{BN1}. The third $\aSTU$ relation is a special case also referred to as Tails Commute, or $TC$. The $\aSTU$ relations imply a number of other relations that are useful to be aware of: the $\aAS$ relation (or Anti-Symmetry of trivalent vertices), the $\aIHX$ relation, and the Four-Term or $\aft$ relation, all shown in Figure~\ref{fig:ASIHX}, and all of which mirror similar relations in the classical Vassiliev context. Note that the relations involving trivalent vertices are similar in spirit to the relations satisfied by a Lie bracket. This will become important and explicit in the next section.

\begin{figure}[h]
\begin{tikzpicture}[scale=.75]

%---------RI-------
\draw[ultra thick,->] (-1,-1)--(-1,1);
\draw[thick, dotted,](-1,-.5) to[out=0, in=270] (0,0);
\draw[thick,dotted,->](0,0) to[out=90, in=360](-1,.5);

\node[] at (.4,0.1) {$\stackrel{RI}{=}$};

\begin{scope}[xshift= 2cm]
\draw[ultra thick,->] (-1,-1)--(-1,1);
\draw[thick, dotted,->](0,0) to[out=270, in=3600] (-1,-.5);
\draw[thick, dotted,](-1,.5) to[out=0, in=90](0,0);
\end{scope}

\begin{scope}[xshift=5cm]
\draw[fill=black] (-1,.95) circle (.17cm);
\draw[ultra thick,] (-1,-1)--(-1,1);
\draw[thick,dotted,<-](-1,0) --(.4,0);

\node[] at (.95,0.1) {$\stackrel{CP}{=}$ 0};
\end{scope}

\end{tikzpicture}
\vspace{.5cm}

\begin{tikzpicture}[scale=.75]

\node at (-.75,0){$\pm$};
\draw[thick](-.75,-1)--(0,0)--(0,1)(.75,-1)--(0,0);
\draw[thick,->,dotted] (0,.9)to [out=225, in=0] (-.75,.75);

\node[xshift=1.4cm] at (-.75,0){$\pm$};
\draw[thick,xshift=2cm](-.75,-1)--(0,0)--(0,1)(.75,-1)--(0,0);
\draw[thick,->,dotted,xshift=2cm] (-.25,-.25)to [out=100, in=0] (-.75,.75);

\node[xshift=3cm] at (-.75,0){$\pm$};
\draw[thick,xshift=4cm](-.75,-1)--(0,0)--(0,1)(.75,-1)--(0,0);
\draw[thick,->,dotted,xshift=4cm] (.4,-.5)to [out=190, in=0] (-.75,.75);

\node[]at (5,0){$\stackrel{\aVI}{=} 0$, };
\node[]at (6.5,0){ and };

\node[xshift=6.5cm] at (-.75,0){$\pm$};
\draw[thick,xshift=9cm](-.75,-1)--(0,0)--(0,1)(.75,-1)--(0,0);
\draw[thick,->,dotted,xshift=9cm](-.75,.75) to [out=0, in=190] (0,.9);

\node[xshift=8cm] at (-.75,0){$\pm$};
\draw[thick,xshift=11cm](-.75,-1)--(0,0)--(0,1)(.75,-1)--(0,0);
\draw[thick,->,dotted,xshift=11cm](-.75,.75) to [out=0, in=110] (-.25,-.25);

\node[xshift=9.4cm] at (-.75,0){$\pm$};
\draw[thick,xshift=13cm](-.75,-1)--(0,0)--(0,1)(.75,-1)--(0,0);
\draw[thick,->,dotted,xshift=13cm](-.75,.75) to [out=0, in=200] (.4,-.5);

\node[]at (14,0){$\stackrel{\aVI}{=} 0$ };

\end{tikzpicture}
\captionof{figure}{The relations $RI$, $CP$,
 and $\protect \aVI$. Ambiguous strands can be either thick black or thin red. In the $\protect \aVI$ relation, signs are positive when the strand of the arrow ending is oriented towards the vertex, and negative otherwise.}\label{fig:RICPTF}
\end{figure}
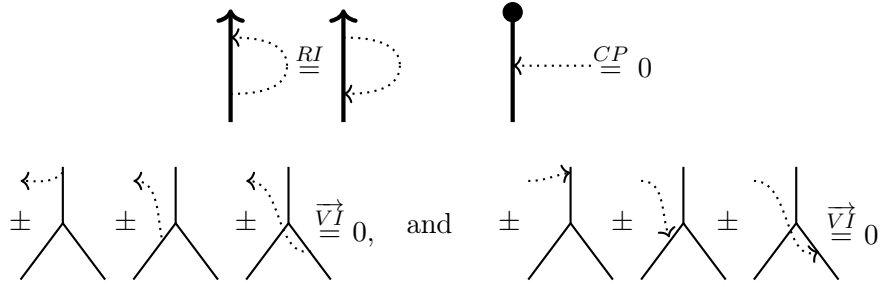

%-----------------STU---------------------------------------
\begin{figure}[h]
\begin{tikzpicture}[scale=.75]

\draw[thick,->](-.75,0)--(.75,0);
\draw[thick,dotted,->](0,1)--(0,0);
\draw[thick,dotted,->](-.75,1.75)--(0,1);
\draw[thick,dotted,->](.75,1.75)--(0,1);

\node[] at (1.3,1){$\stackrel{{\aSTU_1}}{=}$};

\begin{scope}[xshift=2.5cm]
\draw[thick,->](-.75,0)--(.75,0);
\draw[thick,dotted,->](-.75,1.75)--(-.25,0);
\draw[thick,dotted,->](.75,1.75)--(.25,0);

\node[] at (1.4,1){$-$};

\end{scope}

\begin{scope}[xshift=5cm]
\draw[thick,->](-.75,0)--(.75,0);
\draw[thick,dotted,->](-.75,1.75)--(.25,0);
\draw[thick,dotted,->](.75,1.75)--(-.25,0);
\end{scope}

\begin{scope}[xshift=8cm]
\draw[ultra thick,->](-.75,0)--(.75,0);
\draw[thick,dotted,<-](0,1)--(0,0);
\draw[thick,dotted,->](-.75,1.75)--(0,1);
\draw[thick,dotted,<-](.75,1.75)--(0,1);

\node[] at (1.3,1){$\stackrel{{\aSTU_2}}{=}$};

\begin{scope}[xshift=2.5cm]
\draw[ultra thick,->](-.75,0)--(.75,0);
\draw[thick,dotted,->](-.75,1.75)--(-.25,0);
\draw[thick,dotted,<-](.75,1.75)--(.25,0);

\node[] at (1.4,1){$-$};

\end{scope}

\begin{scope}[xshift=5cm]
\draw[ultra thick,->](-.75,0)--(.75,0);
\draw[thick,dotted,->](-.75,1.75)--(.25,0);
\draw[thick,dotted,<-](.75,1.75)--(-.25,0);
\end{scope}

\end{scope}%fullshift

\begin{scope}[xshift=16cm]
\draw[ultra thick,->](-.75,0)--(.75,0);
\draw[thick,dotted,<-](-.75,1.75)--(-.25,0);
\draw[thick,dotted,<-](.75,1.75)--(.25,0);

\node[] at (1.4,1){$\stackrel{{\aSTU_3}}{=}$};
\node[] at (1.35,0.5){\tiny $TC$};

\begin{scope}[xshift=2.5cm]
\draw[ultra thick,->](-.75,0)--(.75,0);
\draw[thick,dotted,<-](-.75,1.75)--(.25,0);
\draw[thick,dotted,<-](.75,1.75)--(-.25,0);
\end{scope}

\end{scope}%fullshift

\end{tikzpicture}
\vspace{.75cm}

 %----------------AS and IHX
 \begin{tikzpicture}[scale=.75]
 \draw[thick,dotted,->](-1,1)--(-.02,.02);
 \draw[thick,dotted,->](1,1)--(0.02,0.02);
 \draw[thick,dotted,->](0,0)--(0,-1);

 \node[] at (1.5,0){$\stackrel{\aAS}{=} -$};
 \begin{scope}[xshift=3cm]
  \draw[thick,dotted,->](-1,1)to [out=360,in=15](.02,.02);
 \draw[thick,dotted,->](1,1)to [out=180,in=160] (-0.02,0.02);
 \draw[thick,dotted,->](0,0)--(0,-1);
 \end{scope}

 \begin{scope}[xshift=6.5cm]

  \draw[thick,dotted,->](-1,1)--(-.02,.02);
 \draw[thick,dotted,->](1,1)--(0.02,0.02);
 \draw[thick,dotted,->](0,0)--(0,-.5);
  \draw[thick,dotted,->](-1.01,-1.4)--(-.02,-.6);
   \draw[thick,dotted,->](.02,-.6)--(1.01,-1.4);
     \node[] at (1.25,0){$\stackrel{\aIHX}{=}$};

 \end{scope}
  \begin{scope}[xshift=9.5cm]

  \draw[thick,dotted,->](-1,1)--(-.51,-.23);
 \draw[thick,dotted,->](1,1)--(.52,-.23);
 \draw[thick,dotted,->](-.5,-.25)--(.5,-.25);
  \draw[thick,dotted,->](-1,-1.4)--(-.51,-.28);
   \draw[thick,dotted,->](.5,-.28)--(1.01,-1.4);
        \node[] at (1.65,-.2){$-$};

 \end{scope}

  \begin{scope}[xshift=12.5cm]

  \draw[thick,dotted,->](-1,1)--(.7,-.7);
 \draw[thick,dotted,->](1,1)--(-.7,-.7);

 \draw[thick,dotted,->](-.75,-.75)--(.7,-.75);

  \draw[thick,dotted,->](-1.25,-1.25)--(-.75,-.75);
   \draw[thick,dotted,->](.75,-.75)--(1.25,-1.25);
 \end{scope}

\end{tikzpicture}
\vspace{.5cm}

\begin{tikzpicture}[scale=.75]

\draw[ultra thick,->] (-1,-1)--(-1,1);
\draw[ultra thick,->] (0,-1)--(0,1);
\draw[thick,->] (1,-1)--(1,1);
\draw[dotted, thick,->](-1,.5)--(-.07,.5);
\draw[dotted,thick,->] (0,-.5)--(1,-.5);

\node[] at (1.75,0){+};

\begin{scope}[xshift=3.5cm]
\draw[ultra thick,->] (-1,-1)--(-1,1);
\draw[ultra thick,->] (0,-1)--(0,1);
\draw[thick,->] (1,-1)--(1,1);
\draw[dotted,thick,->](-1,.5)--(1.,.5);
\draw[dotted,thick,->] (0,-.5)--(1,-.5);
\node[] at (1.75,0){$\stackrel{\aft}{=}$};
\end{scope}

\begin{scope}[xshift=7cm]
\draw[ultra thick,->] (-1,-1)--(-1,1);
\draw[ultra thick,->] (0,-1)--(0,1);
\draw[thick,->] (1,-1)--(1,1);
\draw[dotted,thick,->](0,.5)--(1.,.5);
\draw[dotted,thick,->] (-1,-.5)--(0,-.5);
\node[] at (1.75,0){+};
\end{scope}

\begin{scope}[xshift=10.5cm]
\draw[ultra thick,->] (-1,-1)--(-1,1);
\draw[ultra thick,->] (0,-1)--(0,1);
\draw[thick,->] (1,-1)--(1,1);
\draw[dotted,thick,->](0,.5)--(1.,.5);
\draw[dotted,thick,->] (-1,-.5)--(1,-.5);
\end{scope}
\end{tikzpicture}

\captionof{figure}{The relations $\protect \aSTU$, $\protect \aAS$, $\protect \aIHX$ and $\protect \aft$ on $\mathcal{A}^{swt}$. Ambiguous strands can be either thick black or thin red.}\label{fig:ASIHX}
\end{figure}

We introduce the following notation: For a w-foam ${F}\in \wTFe$, let $\calA (S(F))$ denote the space of arrow diagrams with skeleton $S(F)$, where $S(F)$ is the skeleton of $F$ as defined in Section~\ref{sec:Foams}. Often we will write $\calA (F)$ to mean $\calA (S(F))$.

Arrow diagrams, just like w-foams, form a circuit algebra: given several arrow diagrams, a circuit algebra operation is to connect (some of) their skeleton ends by appropriately coloured and oriented lines. Indeed, circuit algebra composition in $\calA$ is the associated graded operation of the circuit algebra composition in $\wTFe$.

 As for the (strand, disc and sphere) unzip operations $u_e$, given a w-foam $F$ with a choice of a strand $e$, the associated graded unzip operation $u_e: \calA (F)\rightarrow \calA (u_e(F))$ maps each arrow ending on $e$ to a sum of two arrows, one ending on each of the two new strands which replace $e$. For example, an arrow diagram with $k$ arrows ending on $e$ -- either heads or tails -- is mapped to a sum of $2^k$ arrow diagrams. This sum is represented notationally as shown in Figure \ref{fig:exunzip}.

 %----------example unzip---------------
 \begin{figure}
 \[u_e \left( \hspace{2mm}
 \begin{tikzpicture}[baseline=0,scale=.75]

% \node[] at (-2,0) {$u_e $  $($};
 \draw[ultra thick, ->]((-1,-1)--(-1,1);
% \draw[red, ->](1,-1)--(1,1);
 \draw[dotted, thick, <-](1,0)--(-1,0);
 \node[] at (-.75,-1){$e$};
% \node[] at (1.5,0){ $)$};
 \end{tikzpicture}
\hspace{2mm} \right) =
\begin{tikzpicture} [baseline=0,scale=.75]
 \begin{scope}[xshift=4cm]
   \draw[ultra thick, ->]((-1,-1)--(-1,1);
     \draw[ultra thick, ->]((-1.25,-1)--(-1.25,1);
 %\draw[red, ->](1,-1)--(1,1);
 \draw[dotted, thick, <-](1,0)--(-1,0);
  \node[] at (-.95,-1.25){\tiny{$e_1$ $e_2$}};
  \node[] at (2,0){+};
 \end{scope}%xshift

 \begin{scope}[xshift=8cm]
    \draw[ultra thick, ->]((-1.25,-1)--(-1.25,1);
 \draw[ultra thick, ->]((-1,-1)--(-1,1);
% \draw[red, ->](1,-1)--(1,1);
 \draw[dotted, thick, <-](1,0)--(-1.25,0);
     \node[] at (-.95,-1.25){\tiny{$e_1$ $e_2$}};
   \node[] at (2,0){$:=$};
   \node[] at (2,-.5) {\tiny{notation}};
 \end{scope}%xshift

  \begin{scope}[xshift=12.5cm]
    \draw[ultra thick, ->]((-1.25,-1)--(-1.25,1);
 \draw[ultra thick, ->]((-1,-1)--(-1,1);
 %\draw[red, ->](1,-1)--(1,1);
 \draw[dotted, thick, <-](1,0)--(-.75,0);
 \draw[fill=white] (-1.5,.25) rectangle (-.75,-.25);
 \node[] at (-1.1,0){$+$};
 \end{scope}%xshift

 \end{tikzpicture}\]
 \captionof{figure}{ Unzipping the strand labeled $e$, where $e_1$ and $e_2$ are the two new strands replacing $e$. Here an arrow tail is shown, but the same applies to arrow heads.}\label{fig:exunzip}
 \end{figure}

 \subsection{The Homomorphic Expansion}\label{sec:Z} In this section we introduce the {\em homomorphic expansion} $Z: \wTFe \to \calA$, which exists by the results of
 \cite{WKO2} or \cite{WKO3}. An expansion is a filtered linear map\footnote{It is typically also required that $Z$ is {\em group-like}, which is the case for the homomorphic expansion of \cite{WKO2, WKO3}.} with the property that the associated graded map $\operatorname{gr} Z: \calA \to \calA$ is the identity map on $\calA$.  A homomorphic expansion is an expansion that is a circuit algebra homomorphism and also
intertwines each auxiliary operation (here only unzips) of $\wTFe$ with its arrow diagrammatic counterpart, meaning that the square below commutes. For disc unzip, this was shown in \cite{WKO2,WKO3}; for sphere unzip, we will prove it in Lemma~\ref{lem:UnzipHom}.
\[
\begin{tikzpicture}
\node at(0,0) {$\calA$};
\node at(3,0) {$\calA$};
\node at(0,2) {$\wTFe$};
\node at(3,2) {$\wTFe$};
\draw[->](.5,0)--node[below]{$u_e$} (2.5,0);
\draw[->](.5,2)--node[below]{$u_e$} (2.5,2);
\draw[->](0,1.5)--node[left]{$Z$} (0,.5);
\draw[->](3,1.5)--node[right]{$Z$} (3,.5);
\end{tikzpicture}
\]

The map $Z$  sends each generator $G$ to an infinite sum of arrow diagrams on the skeleton $S(G)$, that is, $Z(G)\in \calA(S(G))$. The values of the crossings and the cap are computed explicitly in \cite{WKO2, WKO3}; to refer to them we use the notation shown in Figure \ref{fig:ValuesOfZ}. In particular, the $Z$-value of a crossing of a black strand and a red string is the exponential $e^a$ of an arrow $a$, to be interpreted as the power series, where $a^n$ is shown in Figure~\ref{fig:ValuesOfZ}.

\begin{figure}
\[Z \left( \begin{tikzpicture}[scale=0.5,baseline=0mm]
\draw[ultra thick, ] (0,-1)--(0,1);
\draw[fill=black] (0,1) circle (.2cm);
\end{tikzpicture} \right)=\begin{tikzpicture} [scale=0.5,baseline=-1mm]
\draw[ultra thick, ] (0,-1)--(0,1);
\draw[fill=black] (0,1) circle (.2cm);
\draw[fill=white] (-.65,-.3) rectangle (.65,.3);
\node[] at (0,0) {\Small $C$};
\end{tikzpicture} \hspace{1.5cm} Z \left( \begin{tikzpicture}[scale=0.5,baseline=0mm]
\draw[ultra thick](1,-1)--(-1,1);
\draw[thick, ->](1,-1)--(-1.1,1.1);
\draw[red, thick](-1,-1)--(-.2,-.2);
\draw[ red, thick] (.2,.2)--(1,1);
\draw[red,  thick,->] (.2,.2)--(1.1,1.1);
\end{tikzpicture} \right)=\begin{tikzpicture} [scale=0.5,baseline=-1mm]
\draw[ultra thick,-> ] (0,-1.25)--(0,1.25);
%\draw[fill=white] (-.75,-.35) rectangle (.75,.35);
%\node[] at (0,0) {$+$};
\draw[->,thick,dotted] (0,0)--node[above] {$e^a$}(2.25,0);
\draw[thick,red,->] (2.25,-1.25)--(2.25,1.25);
\end{tikzpicture} \hspace{1.5cm}a^n:= \begin{tikzpicture} [scale=0.5,baseline=-1mm]
\draw[ultra thick,-> ] (0,-1.25)--(0,1.25);
\draw[thick,red,->] (2.25,-1.25)--(2.25,1.25);
\draw[->,thick,dotted] (0,-.5)--(2.25,-.5);
\draw[->,thick,dotted] (0,.5)--(2.25,.5);
\draw[->,thick,dotted] (0,-.7)--(2.25,-.7);
\node[] at(1.12,.2) {$\vdots$};
\node[] at(2.6,-.1) {\huge $\}$};
\node[] at(3.1,-.1) { \Small $n$};

\end{tikzpicture} \]\captionof{figure}{Values of $Z$ on generating w-foams.}\label{fig:ValuesOfZ}

\end{figure}

To describe the $Z$-value $C$ of a cap, we need to introduce a special class of arrow diagrams called {\em wheels}.  A wheel is an oriented cycle of arrows with a finite number of incoming arrows, or ``spokes''. (The 2-in-1-out property forces all univalent ends of this arrow graph to be arrow tails, that is, all spokes are incoming.) See Figure~\ref{WheelExample} for an example of a wheel. Note that reversing the orientation of an even wheel yields an equivalent arrow diagram via $\aAS$ and $TC$ relations, while reversing the orientation of an odd wheel produces its negative. Hence, from now on we assume all wheels are oriented clockwise. The value $C$ is an infinite sum of even wheels:
\begin{equation}\label{eq:C}
C= \operatorname{exp}\left(\sum_{n=1}^\infty c_{2n}w_{2n}\right), \quad \text{ where } \quad
\sum_{n=1}^\infty c_{2n}x^{2n}=\frac{1}{4} \operatorname{log}\frac{\operatorname{sinh}{x/2}}{x/2}.
\end{equation}
In particular, $c_2=\frac{1}{96}$, $c_4=-\frac{1}{11520}$, and $c_6=\frac{1}{752776}$.

Since $Z$ is a circuit algebra homomorphism, given the values of $Z$ on the generators, it is straightforward to compute $Z$ of any w-foam $F$: if $F$ is a circuit composition of some generators $\{G_i\}$,  then $Z(F)$ is the same circuit composition of the values $Z(G_i)$.

We are working towards a diagrammatic statement of ``$1+1=2$'', which depends heavily on the homomorphicity of $Z$. We have yet to prove that $Z$ is homomorphic with respect to the sphere unzip operation, and to achieve this we need to discuss the $Z$-values of the vertices in some detail.
By definition, $Z(\bdlambda)\in \calA(\bdlambda)$. Using iterative applications of the relation $\aVI$, all arrow endings on the vertical strand of \bdlambda \hspace{1mm} can be moved to the bottom two strands. This induces an isomorphism $ \calA(\bdlambda)\cong \calAtwo$ \cite{WKO2}. Thus, $Z(\bdlambda)$ can be viewed as an element of $\calAtwo$ denoted by $V_+$, as shown in Figure \ref{fig:vplus}. The arrow diagram $V_- =Z(\bulambda)\in \calAtwo$ is defined similarly.

Note  that $\calAtwo$ is an algebra\footnote{In fact,  a Hopf algebra with coproduct $\square:\calAtwo\rightarrow \calAtwo\otimes \calAtwo $. The coproduct $\square$ of an arrow diagram is a sum of all possible ways of attaching each of the connected component of the arrow graph -- after removing the skeleton -- to one of the tensor factor skeleta. Details on the Hopf algebra structure are in \cite[Section 3.2] {WKO2}.} with multiplication given by vertical concatenation. Let us recall a useful fact from~\cite{WKO2}:

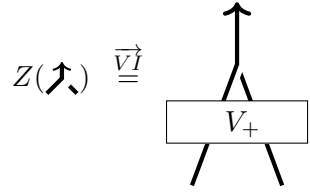
\begin{figure}
\[Z(\bdlambda) \hspace{2mm} \stackrel{\aVI}{=} \hspace{2mm}
 \begin{tikzpicture}[scale=0.75,baseline=-1.75mm]
 \draw[ultra thick ] (.8,-2)--(0,.15);
 \draw[line width= 2 mm, white ] (-.8,-2)--(0,.15)--(0,1.25);
\draw[ultra thick,-> ] (-.8,-2)--(0,.15)--(0,1.25);
\draw[fill=white,yshift=.5cm] (-1.25,-1) rectangle (1.25,-1.75);
\node[yshift=.25cm]at(0.1,-1.25) {$V_+$};
\end{tikzpicture}\]
\captionof{figure}{The definition of $V_+$.}\label{fig:vplus}
\end{figure}

\begin{lem}\label{lemV}
 In  $\calAtwo$, $V_+$ and $V_-$ are multiplicative inverses, i.e.  $V_+\cdot V_-=1=$ \begin{tikzpicture}[scale=.2, baseline=.0mm  ]
\draw[ultra thick,](0,-.5)--(0,1.2);
\draw[ultra thick,](1,-.5)--(1,1.2);
\draw[thick,->](0,-.5)--(0,1.3);
\draw[thick,->](1,-.5)--(1,1.3);
\end{tikzpicture}.
\end{lem}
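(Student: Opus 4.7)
The plan is to realize $V_+\cdot V_-$ as the $Z$-value of a concretely built w-foam and then use strand unzip to reduce that w-foam to the trivial object of two parallel strands.

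First, I assemble the w-foam $W \in \wTFe$ by circuit-composing $\bdlambda$ at the bottom with $\bulambda$ at the top, identifying the outgoing single strand of $\bdlambda$ with the incoming single strand of $\bulambda$. The skeleton of $W$ is an ``H'': two strands enter from the bottom, merge at the positive vertex, continue as a single middle strand, and split again at the negative vertex into two strands going up. Since $Z$ is a circuit algebra homomorphism, $Z(W) \in \calA(W)$ is the corresponding circuit composition of $V_+ = Z(\bdlambda)$ and $V_- = Z(\bulambda)$.

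Next, I apply iterated $\aVI$ moves at the two vertices of $W$ to push all arrow endings of $Z(W)$ off the middle strand onto the four outer strands. After this reduction, the endings coming from the $V_+$ factor land on the two bottom strands and those from $V_-$ land on the two top strands; this is precisely the isomorphism $\calA(\bdlambda)\cong\calAtwo$ (and its analogue for $\bulambda$) being applied inside the composed diagram.

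Now I invoke strand unzip $u$ on the (now arrow-free) middle strand; this is allowed because the strand is bounded by vertices of opposite signs. By its topological description (Figure~\ref{fig:unzip}, right), $u$ doubles the middle strand along its blackboard framing and glues the two new parallel strands to the corresponding edge strands at each vertex, so both vertices are absorbed and $u(W)$ is just two parallel strands. On the diagrammatic side, because the middle strand carries no arrow endings, $u$ simply reinterprets the H-skeleton as the two-strand skeleton of $u(W)$: bottom-left connects to top-left, bottom-right to top-right. Under this identification, $u(Z(W))$ is exactly the vertical concatenation $V_+\cdot V_-\in\calAtwo$. The homomorphicity of $Z$ with respect to unzip operations, established in~\cite{WKO2, WKO3}, then gives
\[
V_+\cdot V_- \;=\; u(Z(W)) \;=\; Z(u(W)) \;=\; Z(\text{two parallel strands}) \;=\; 1.
\]

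The main obstacle is justifying that $u(W)$ is genuinely the trivial two-strand w-foam and that strand unzip is compatible with $Z$. Since this paper singles out disc unzip as its essential auxiliary operation, one must invoke strand unzip (or an equivalent simplification move obtainable from the full operation set of $\wTFe$) and its homomorphicity; unpacking the definition of strand unzip at each vertex carefully confirms that the doubled middle strand absorbs both vertices without leaving any residual structure, and the required homomorphicity statement is part of the broader framework of~\cite{WKO3}.
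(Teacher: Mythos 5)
Your argument is correct and is essentially the paper's own proof: Figure~\ref{fig:Vproof} is exactly your computation of $u_e\circ Z$ versus $Z\circ u_e$ on the w-foam obtained by stacking \bdlambda\ under \bulambda, with the homomorphicity of $Z$ with respect to (strand) unzip doing all the work. Your additional care --- pushing arrow endings off the middle strand via $\aVI$ before unzipping, and flagging that strand unzip and its compatibility with $Z$ must be imported from \cite{WKO2, WKO3} --- only makes explicit what the paper leaves implicit.
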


\begin{proof}
This follows from the fact that $Z$ is homomorphic with respect to the unzip operation, as shown in Figure \ref{fig:Vproof}.
\end{proof}

%--------------------uZ=Zu on jumper--------------------
\begin{figure}
\[u_eZ \left( \begin{tikzpicture}[scale=0.5,baseline=1.75mm]
\draw[ultra thick, ->] (-1,-1)--(0,-.2)--(0,1.2)--(-1,2);
\draw[ultra thick, ->] (1,-1)--(.2,-.2);
\draw[ultra thick, ->] (.2,1.2)--(1,2);
\draw[ultra thick, ->] (-1,-1)--(-.4,-.52);
\node[]at(-.5,.5){$e$};
\end{tikzpicture} \right)=u_e \left(\begin{tikzpicture}[scale=0.5,baseline=1.75mm]
\draw[ultra thick, ] (-1,-1)--(0,-.2)--(0,1.2)--(-1,2);
\draw[ultra thick, ] (1,-1)--(.2,-.2);
\draw[ultra thick, ] (.2,1.2)--(1,2);
\node[]at(-.5,.5){$e$};

\draw[fill=white] (-1.25,2) rectangle (1.25,2.75);
\draw[ultra thick,->](-1,2.75)--(-1,3.3);
\draw[ultra thick,->](1,2.75)--(1,3.3);
\node[]at(0.1,2.3) {\Small $V_-$};

\draw[fill=white] (-1.25,-1) rectangle (1.25,-1.75);
\draw[ultra thick,<-](-1,-1.75)--(-1,-2.3);
\draw[ultra thick,<-](1,-1.75)--(1,-2.3);
\node[]at(0.1,-1.4) {\Small $V_+$};

\end{tikzpicture}\right)= \begin{tikzpicture}[scale=0.5,baseline=1.75mm]
\draw[ultra thick,-> ] (-.8,-2)--(-.8,3);
\draw[ultra thick,-> ] (.8,-2)--(.8,3);

\draw[fill=white,yshift=-.5cm] (-1.25,2) rectangle (1.25,2.75);
%\draw[ultra thick,->](-1,2.75)--(-1,3.3);
%\draw[ultra thick,->](1,2.75)--(1,3.3);
\node[yshift=-.3cm]at(0.1,2.4) {\Small $V_-$};

\draw[fill=white,yshift=.5cm] (-1.25,-1) rectangle (1.25,-1.75);
%\draw[ultra thick,<-](-1,-1.75)--(-1,-2.3);
%\draw[ultra thick,<-](1,-1.75)--(1,-2.3);
\node[yshift=.25cm]at(0.1,-1.4) {\Small $V_+$};
\end{tikzpicture}\hspace{1.5cm} Zu_e \left( \begin{tikzpicture}[scale=0.5,baseline=1.75mm]
\draw[ultra thick, ->] (-1,-1)--(0,-.2)--(0,1.2)--(-1,2);
\draw[ultra thick, ->] (1,-1)--(.2,-.2);
\draw[ultra thick, ->] (.2,1.2)--(1,2);
\draw[ultra thick, ->] (-1,-1)--(-.4,-.52);
\node[]at(-.5,.5){$e$};
\end{tikzpicture} \right)= Z\left( \begin{tikzpicture}[scale=0.5,baseline=1.75mm]
\draw[ultra thick,-> ] (-.8,-1)--(-.8,2);
\draw[ultra thick,-> ] (.8,-1)--(.8,2);
\end{tikzpicture}\right)= \hspace{.1cm}\begin{tikzpicture}[scale=0.5,baseline=1.75mm]
\draw[ultra thick,-> ] (-.6,-1)--(-.6,2);
\draw[ultra thick,-> ] (.8,-1)--(.8,2);
\end{tikzpicture}\]
\captionof{figure}{Visual proof of Lemma \ref{lemV}.}\label{fig:Vproof}
\end{figure}
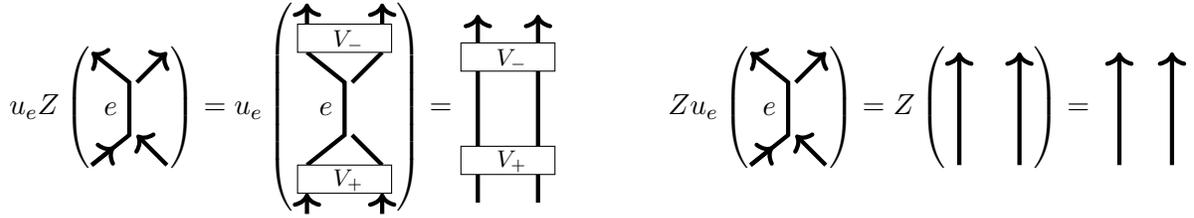

The following corollary is a crucial ingredient in proving that $Z$ is also homomorphic with respect to sphere unzip:
 \begin{cor}\label{cor:InverseVertices}
$Z(\begin{tikzpicture}[scale=.15,, baseline=-2.5mm]
\draw[ultra thick](-1.12,-1.12)--(0,0)--(0,1) (.35,-.35)--(1.1,-1.1);
\draw[ ultra thick](-1.05,-.95)--(0,-2)--(0,-3)(1.05,-.95)--(.25,-1.65);
\end{tikzpicture})=\begin{tikzpicture}[scale=.15,, baseline=-2.5mm]
\draw[ultra thick](-1.12,-1.12)--(0,0)--(0,1) (.35,-.35)--(1.1,-1.1);
\draw[ ultra thick](-1.05,-.95)--(0,-2)--(0,-3)(1.05,-.95)--(.25,-1.65);
\end{tikzpicture}$, that is, the $Z$-value of this w-foam is trivial, a skeleton with no arrows. \qed
\end{cor}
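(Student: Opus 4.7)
The w-foam in the statement is the circuit algebra composition of a negative vertex \bulambda at the bottom with a positive vertex \bdlambda on top, with the two outgoing strands of \bulambda glued to the two incoming strands of \bdlambda, leaving a single strand at the bottom and a single strand at the top. The plan is to use the circuit algebra homomorphicity of $Z$ to reduce the computation directly to Lemma~\ref{lemV}.

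Since $Z$ is a circuit algebra homomorphism, $Z$ of this composition equals the corresponding circuit composition of $Z(\bulambda)$ and $Z(\bdlambda)$, viewed as an arrow diagram on the theta-shaped skeleton. Using the relation $\aVI$ I would push every arrow ending on the single bottom strand up through the lower vertex, and every arrow ending on the single top strand down through the upper vertex, until all arrow endings lie on the two middle parallel strands. This gives a canonical isomorphism from $\calA$ of the theta to $\calAtwo$, precisely the identification already used to define $V_+$ and $V_-$ from $Z(\bdlambda)$ and $Z(\bulambda)$. Under this identification $Z$ of the theta becomes the vertical concatenation $V_+ \cdot V_-$ in $\calAtwo$.

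By Lemma~\ref{lemV}, $V_+ \cdot V_- = 1$, so $Z$ of the theta w-foam is the unit of $\calAtwo$, which translates back under the $\aVI$-identification to the theta skeleton decorated with no arrows. There is no real obstacle here: the proof is essentially a repackaging of Lemma~\ref{lemV}, with the only content being the observation that $\aVI$ allows us to confine all arrows to the middle two-strand region where Lemma~\ref{lemV} applies verbatim.
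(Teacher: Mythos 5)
Your proof is correct and is essentially the paper's own (implicit) argument: the corollary carries no separate proof precisely because, once the $\aVI$-identification confines all arrow endings to the two middle strands, $Z$ of the theta becomes the stacked product of $V_-$ and $V_+$ in $\calAtwo$, which Lemma~\ref{lemV} makes trivial. The one small point to watch is the order of the factors: in the theta the splitting vertex sits \emph{below} the merging one, so the product that actually appears is $V_-$ below $V_+$, the reverse of the order literally established in Figure~\ref{fig:Vproof}; this is harmless because $V_\pm = 1 + (\text{higher degree})$ are invertible in the degree-completed algebra $\calAtwo$, so the one-sided identity $V_+\cdot V_-=1$ already forces $V_-\cdot V_+=1$.
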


\begin{lem}\label{lem:UnzipHom}
Any homomorphic expansion $Z$ of $\wTFe$ is also homomorphic with respect to sphere unzip.
\end{lem}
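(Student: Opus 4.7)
The strategy is to reduce sphere unzip to a composition of two disc unzips, since $Z$ is already known to be homomorphic with respect to the latter. Given a twice-capped strand $S$, I insert in its middle a pair of vertices forming the skeleton piece $\twoverteces$ to obtain a new w-foam $S'$: here the single strand splits downward via a positive vertex $\bdlambda$ into a ``bigon'' of two parallel strands, which then merge back into a single strand via a negative vertex $\bulambda$. A direct inspection of the generators of $\wTFe$ shows that disc-unzipping the upper capped segment of $S'$ (removing the top cap and the $\bdlambda$) and then the lower capped segment (removing the bottom cap and the $\bulambda$) produces exactly two parallel twice-capped strands; that is,
\[
u_e \, u_f (S') \;=\; u_{\mathrm{sphere}}(S) \quad \text{in } \wTFe,
\]
for appropriate labels $e$, $f$ on the two capped segments of $S'$.

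Applying $Z$ to both sides and invoking the known homomorphicity of $Z$ under disc unzip twice, we obtain
\[
Z\bigl(u_{\mathrm{sphere}}(S)\bigr) \;=\; u_e \, u_f \, Z(S').
\]
Next, I compute $Z(S')$ via circuit algebra homomorphicity: it is the vertical composition of the $Z$-values of the three sub-pieces (top cap with strand, middle bigon $\twoverteces$, strand with bottom cap). By Corollary~\ref{cor:InverseVertices}, the middle factor contributes no arrows. Hence $Z(S')$ is essentially $Z(S)$ redrawn on the $S'$ skeleton: the wheels arising from the top cap sit on the upper single strand, the wheels from the bottom cap sit on the lower single strand, and the bigon is arrow-free.

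It then remains to identify $u_e u_f Z(S')$ with the natural diagrammatic sphere unzip of $Z(S)$, which by definition distributes each arrow endpoint on the sphere strand independently to either of the two new parallel strands. Because the bigon in $Z(S')$ carries no arrows, the disc unzips $u_e$ and $u_f$ act on disjoint collections of arrow endpoints (the spokes of the top wheels and the spokes of the bottom wheels, respectively); together they distribute every arrow endpoint to one of the two emerging parallel strands, independently and summed. This is precisely the combinatorial effect of the diagrammatic sphere unzip applied to $Z(S)$. Chaining the equalities yields $Z \circ u_{\mathrm{sphere}} = u_{\mathrm{sphere}} \circ Z$.

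The main obstacle is this final matching step: verifying that the two independent disc unzips of $Z(S')$ produce the same sum of arrow diagrams as the diagrammatic sphere unzip applied to $Z(S)$. Conceptually this is transparent --- both amount to independently distributing each arrow endpoint over the two new strands --- but a careful proof requires explicitly labeling the endpoints and tracking them through each operation, as well as confirming that the skeleton identification between the output of $u_e u_f$ on $S'$ and the output of $u_{\mathrm{sphere}}$ on $S$ respects the arrow data. All of the essential ingredients --- the vertex-pair insertion, the triviality of $Z(\twoverteces)$ from Corollary~\ref{cor:InverseVertices}, and disc unzip homomorphicity --- are already in place.
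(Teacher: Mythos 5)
Your argument is essentially the paper's proof: the paper packages your vertex--bigon--vertex insertion as a separate operation called \emph{cactus grafting}, shows $Z$ respects it via Corollary~\ref{cor:InverseVertices} (together with the head-invariance property to see the operation is well defined on arrow diagrams), and then writes sphere unzip as one cactus grafting followed by two disc unzips, exactly as you do. The only slip is cosmetic: disc unzip doubles the caps rather than removing them, which is in fact what you need to land on two parallel twice-capped strands.
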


\begin{proof}
To prove this we realize the sphere unzip operation as a composition of disc unzips with another new operation called ``cactus grafting''. Let $K$ denote
the ``cactus'' w-foam shown in Figure~\ref{fig:CactusGraft}. If $e$ is a capped strand of a w-foam, the {\em cactus grafting} operation $g_e$ attaches $K$ to the capped end of $e$. This is well-defined: the only relation involving a cap is $CP$, namely, the cap can be pulled out from under a strand. The same is true for the cactus, combining $CP$ with $R4$ moves. Topologically, cactus grafting is cutting a small disk out of a capped strand and gluing the resulting boundary circle to the boundary of the cactus $K$.

The associated graded cactus grafting operation on arrow diagrams -- also denoted $g_{e}$ -- simply attaches the skeleton $K$ (without any arrows) at the end of the capped strand $e$ in place of the cap. This is well-defined: the cap participates only in the $CP$ relation, that is, an arrow head that is not separated from the cap by another arrow ending is zero. However, an arrow head on $K$ that is not separated by another arrow ending from the bottom of $K$ is also zero, as shown in Figure~\ref{fig:CactusGraft}; this property is known as the {\em head invariance of arrow diagrams} \cite[Remark 3.14]{WKO2}.

Finally, Corollary~\ref{cor:InverseVertices} implies that $Z(K)$ is simply a cap value $C$ at the top of an otherwise empty skeleton $K$. Since the value $C$ can be move from the top to the bottom of $K$ using two $\aVI$ relations, we obtain that for any w-foam $F$ with a capped edge $e$, $Z(g_{e}(F))=g_{e}(Z(F))$, in other words $Z$ is homomorphic with respect to $g_{e}$.

Finally, sphere unzip can be written as a composition of one cactus grafting followed by two disc unzips as shown in Figure \ref{fig: cactuscomp}, hence $Z$ is homomorphic with respect to sphere unzip.
\end{proof}

\begin{figure}
\begin{tikzpicture}[scale=.4]

% K -------------------------------
\begin{scope}[xshift=3cm]
\draw[ultra thick] (5,-1)--(6,-2)--(5,-3);
\draw[line width= 3 mm, white](5,1)--(5,-1)--(4,-2)--(5,-3)--(5,-4);
\draw[ultra thick] (5,1)--(5,-1)--(4,-2)--(5,-3)--(5,-4);
\filldraw[fill=black, draw=black] (5,1) circle (2mm) ;
\node[right] at (5, 0) {$K$};
\end{scope}
%--------------------------------------

%g_K part---------------------------
\begin{scope}[xshift=7cm]
\draw[ultra thick] (8,-4)--(8,-2);
\filldraw[fill=black, draw=black] (8,-2) circle (2mm) ;

\draw[ultra thick](13,0)--(14,-1)--(13,-2);
\draw[line width= 3 mm, white](13,1)--(13,0)--(12,-1)--(13,-2)--(13,-4);
\draw[ultra thick](13,1)--(13,0)--(12,-1)--(13,-2)--(13,-4);
\filldraw[fill=black, draw=black] (13,1) circle (2mm) ;

\draw[->] (9,-3)--node[above]{\Large $g$}(12,-3);
\node[] at (11,-2.5) {\small $e$};
\node[] at (7.5,-2.9) {\Large $e$};
\end{scope}
%----------------------------------------

\begin{scope}[xshift=15cm]
\draw[ultra thick](13,0)--(14,-1.25)--(13,-2.5);
\draw[line width= 3 mm, white](13,1)--(13,0)--(12,-1.25)--(13,-2.5)--(13,-4);
\draw[ultra thick](13,1)--(13,0)--(12,-1.25)--(13,-2.5)--(13,-4);
\filldraw[fill=black, draw=black] (13,1) circle (2mm) ;

\draw[blue,thick] (11.5,-2) rectangle (14.5,-.5);
\node[blue] at (13,-1.12) {$D$};

\draw[thick,dotted ,->](15.5,-3.25)--(13.1,-3.25);

\node[] at(17,-1.12){\Large $=0$};

\end{scope}

\end{tikzpicture}
\captionof{figure}{The ``cactus'' w-foam $K$ on the left, cactus grafting in the middle. The ``head invariance'' property of arrow diagrams on $K$ is shown on the right, where $D$ denotes any arrow diagram above the arrow head on the skeleton $K$.}\label{fig:CactusGraft}
\end{figure}
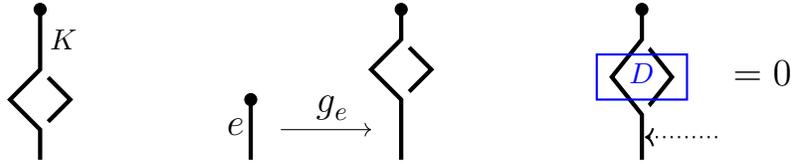

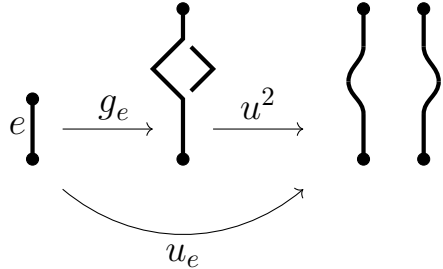
\begin{figure}
\begin{tikzpicture}[scale=.4]

%g_K part---------------------------
\begin{scope}[xshift=7cm]
\draw[ultra thick] (8,-4)--(8,-2);
\filldraw[fill=black, draw=black] (8,-2) circle (2mm) ;
\filldraw[fill=black, draw=black] (8,-4) circle (2mm) ;

\draw[ultra thick](13,0)--(14,-1)--(13,-2);
\draw[line width= 3 mm, white](13,1)--(13,0)--(12,-1)--(13,-2)--(13,-4);
\draw[ultra thick](13,1)--(13,0)--(12,-1)--(13,-2)--(13,-4);
\filldraw[fill=black, draw=black] (13,1) circle (2mm) ;
\filldraw[fill=black, draw=black] (13,-4) circle (2mm) ;

\draw[->] (9,-3)--node[above]{\Large $g$}(12,-3);
\node[] at (11,-2.5) {\small $e$};
\node[] at (7.5,-2.9) {\Large $e$};
\end{scope}
%----------------------------------------

\begin{scope}[xshift=12cm]
\draw[->] (9,-3)--node[above]{\Large $u^2$}(12,-3);
\filldraw[fill=black, draw=black] (14,1) circle (2mm) ;
\filldraw[fill=black, draw=black] (14,-4) circle (2mm) ;
\draw[ultra thick] (14,-4)--(14,-2.5)(14,-.25)--(14,1);
\draw[ultra thick]  (14,-2.5) to [out=90, in=270] (13.5,-1.42);
\draw[ultra thick]  (13.5,-1.42) to [out=90, in=270] (14,-.25);

\filldraw[fill=black, draw=black] (16,1) circle (2mm) ;
\filldraw[fill=black, draw=black] (16,-4) circle (2mm) ;
\draw[ultra thick] (16,-4)--(16,-2.5)(16,-.25)--(16,1);
\draw[ultra thick]  (16,-2.5) to [out=90, in=270] (16.5,-1.42);
\draw[ultra thick]  (16.5,-1.42) to [out=90, in=270] (16,-.25);

\end{scope}

\draw[->](16,-5) to  [out=320, in=220] (24,-5);
\node[below]at (20,-6.5){\Large $u_e$};
\end{tikzpicture}
\captionof{figure}{Sphere unzip is the composition of the cactus grafting and two disc unzips.}\label{fig: cactuscomp}
\end{figure}

\subsection{The diagrammatic statement}

Applying the homomorphic expansion $Z$ to the topological statement of Section \ref{eqfoams} gives rise to the following equation:

  \[ Z\left(
  \begin{tikzpicture}[scale=.17,, baseline=-1.3mm]
\draw[thick](0,-1)--(0,1);
\draw[red](-1,0)--(-.4,0);
\draw[red] (.4,0)--(.7,0);
\filldraw[fill=black, draw=black] (0,1) circle (2mm) ;
\filldraw[fill=black, draw=black] (0,-1) circle (2mm) ;
\draw[thick](1,-1)--(1,1);
\draw[red](1.4,0)--(2,0);
\filldraw[fill=black, draw=black] (1,1) circle (2mm) ;
\filldraw[fill=black, draw=black] (1,-1) circle (2mm) ;
\end{tikzpicture} \right)=Z\left(
  \begin{tikzpicture}[scale=.17,, baseline=-1.3mm]
\draw[thick](0,-1)--(0,1);
\draw[red](-1,0)--(-.4,0);
\draw[red] (.4,0)--(1,0);
\filldraw[fill=black, draw=black] (0,1) circle (2mm) ;
\filldraw[fill=black, draw=black] (0,-1) circle (2mm) ;
\end{tikzpicture} \right)\# Z\left(  \begin{tikzpicture}[scale=.17,, baseline=-1.3mm]
\draw[thick](0,-1)--(0,1);
\draw[red](-1,0)--(-.4,0);
\draw[red] (.4,0)--(1,0);
\filldraw[fill=black, draw=black] (0,1) circle (2mm) ;
\filldraw[fill=black, draw=black] (0,-1) circle (2mm) ;
\end{tikzpicture} \right) = u Z \left(
  \begin{tikzpicture}[scale=.17,, baseline=-1.3mm]
\draw[thick](0,-1)--(0,1);
\draw[red](-1,0)--(-.4,0);
\draw[red] (.4,0)--(1,0);
\filldraw[fill=black, draw=black] (0,1) circle (2mm) ;
\filldraw[fill=black, draw=black] (0,-1) circle (2mm) ;
\end{tikzpicture} \right) \]

 Using the notation of Figure~\ref{fig:ValuesOfZ}, we can compute each term of this individually to obtain the final form of the diagrammatic statement, as shown in Figure~\ref{fig:comps}.

 \begin{figure}

 \begin{tikzpicture}[scale=.5]
\draw[thick](-9.5,-1) rectangle (9.5,10);
\node[]at (0,9){\Large \textbf{Diagramatic Statement}};
\node[]at (-.5,5){\Huge $=$};
%-------------------------u^2----------------------
\begin{scope}[xshift=5cm,yshift=5cm]
\draw[ultra thick] (-1,-2.75)--(-1,2.75);
\filldraw[fill=black, draw=black] (-1,-2.75) circle (2mm) ;
\filldraw[fill=black, draw=black] (-1,2.75) circle (2mm) ;
\draw[red,->](2,-2.75)--(2,2.75);

\begin{scope}[xshift=-2cm]
\draw[ultra thick] (-1,-2.75)--(-1,2.75);
\filldraw[fill=black, draw=black] (-1,-2.75) circle (2mm) ;
\filldraw[fill=black, draw=black] (-1,2.75) circle (2mm) ;
\draw[thick,dotted ,->] (-.7,0)--(4,0);
\node[] at (4,0) {\hspace{12mm}$e^{a_1+a_2}$};

\begin{scope}[yshift=-.5cm]
\draw[fill=white] (-1.6,-1) rectangle (1.5,-1.8);
\node[]at(0,-1.4) {\Small $u(C)$};
\end{scope}

\begin{scope}[yshift=3.25cm]
\draw[fill=white] (-1.6,-1) rectangle (1.5,-1.8);
\node[]at(0,-1.4) {\Small $u(C)$};
\end{scope}

\begin{scope}[yshift=1.4cm]
\draw[fill=white] (-1.6,-1) rectangle (1.5,-1.8);
\node[]at(0,-1.4) {\Small $+$};
\end{scope}\end{scope}

\end{scope}
%-----------------end u^2---------------------

%-----------------Z#Z-------------------------
\begin{scope}[xshift=-5cm,yshift=5cm]
\draw[ultra thick] (-1,-2.75)--(-1,2.75);
\filldraw[fill=black, draw=black] (-1,-2.75) circle (2mm) ;
\filldraw[fill=black, draw=black] (-1,2.75) circle (2mm) ;
\draw[dotted,thick ,->] (-.7,.75)--node[above]{$e^{a_2}$}(2,.75);
\draw[red,->](2,-2.75)--(2,2.75);

\begin{scope}[yshift=-.5cm]]
\draw[fill=white] (-1.6,-1.1) rectangle (-.4,-1.8);
\node[]at(-1,-1.4) {\Small $C$};
\end{scope}

\begin{scope}[yshift=3.5cm]
\draw[fill=white] (-1.6,-1.1) rectangle (-.4,-1.8);
\node[]at(-1,-1.4) {\Small $C$};
\end{scope}

\begin{scope}[xshift=-2cm]
\draw[ultra thick] (-1,-2.75)--(-1,2.75);
\filldraw[fill=black, draw=black] (-1,-2.75) circle (2mm) ;
\filldraw[fill=black, draw=black] (-1,2.75) circle (2mm) ;
\draw[thick,dotted ,->] (-.7,-.65)--node[below]{\hspace{10mm}$e^{a_1}$}(4,-.65);

\begin{scope}[yshift=-.5cm]]
\draw[fill=white] (-1.6,-1.1) rectangle (-.4,-1.8);
\node[]at(-1,-1.4) {\Small $C$};
\end{scope}

\begin{scope}[yshift=3.5cm]
\draw[fill=white] (-1.6,-1.1) rectangle (-.4,-1.8);
\node[]at(-1,-1.4) {\Small $C$};
\end{scope}
\end{scope}

\end{scope}
\end{tikzpicture}

\vspace{-1.75cm}
\[\hspace{-1.5cm}Z\left(
  \begin{tikzpicture}[scale=.17,, baseline=-1.3mm]
\draw[thick](0,-1)--(0,1);
\draw[red](-1,0)--(-.15,0);
\draw[red] (.15,0)--(1,0);
\filldraw[fill=black, draw=black] (0,1) circle (2mm) ;
\filldraw[fill=black, draw=black] (0,-1) circle (2mm) ;
\end{tikzpicture} \right)\# Z\left(  \begin{tikzpicture}[scale=.17,, baseline=-1.3mm]
\draw[thick](0,-1)--(0,1);
\draw[red](-1,0)--(-.15,0);
\draw[red] (.15,0)--(1,0);
\filldraw[fill=black, draw=black] (0,1) circle (2mm) ;
\filldraw[fill=black, draw=black] (0,-1) circle (2mm) ;
\end{tikzpicture} \right)\hspace{.75cm}
= \hspace{.5cm} u Z \left(
  \begin{tikzpicture}[scale=.17,, baseline=-1.3mm]
\draw[thick](0,-1)--(0,1);
\draw[red](-1,0)--(-.15,0);
\draw[red] (.15,0)--(1,0);
\filldraw[fill=black, draw=black] (0,1) circle (2mm) ;
\filldraw[fill=black, draw=black] (0,-1) circle (2mm) ;
\end{tikzpicture}  \right)\]
%-------------endZ#Z----------------------

\captionof{figure}{The Diagrammatic Statement of ``$1+1=2$'' in $\calA$. Since tails commute, two caps on a strand can be combined into $C^2$, and the two unzipped caps can be combined as $u(C)^2=u(C^2)$.}\label{fig:comps}
\end{figure}
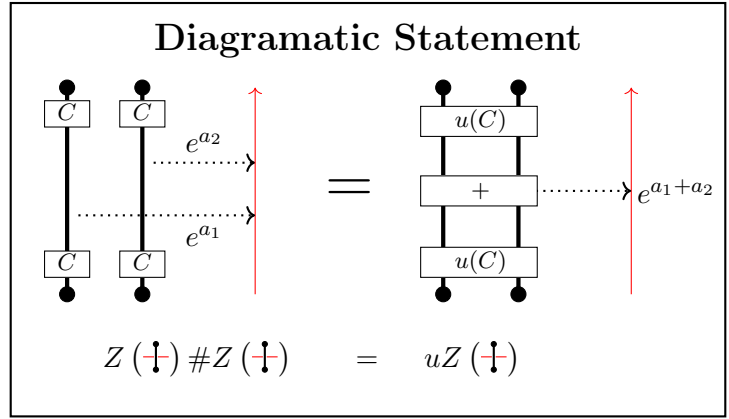

 \section{Understanding the Tensor Statement}

 Ultimately we aim to give a solution of the Duflo problem, which concerns finite dimensional Lie algebras. From here on, we fix a finite dimensional Lie algebra $\g$ over a field $\mathbb K$ of characteristic zero. Let $I\g$ denote the {\em double} of $\g$, that is, the semidirect product $\g^* \rtimes \g$, where $\g^*$ is taken to be abelian, and $\g$ acts on $\g^*$ by the coadjoint action. In formulae:

$$ I\g=\{(\varphi, x): \varphi\in \g^*, x \in \g\}, $$
  \begin{equation}\label{eq:Ig}
 [(\varphi_1,x_1),(\varphi_2, x_2)]=(x_1\cdot\varphi_2-x_2\cdot\varphi_1,[x_1,x_2]).
 \end{equation}

 We will define a \textit{tensor interpretation} map
 $$T:\mathcal A^w(\uparrow^n)\to (U(I\g)^{\otimes n})^{\wedge}, $$
where $A^w(\uparrow^n)$ denotes arrow diagrams on $n$ thick black strands, modulo the three $\aSTU$ relations only\footnote{It would be called $\calA(\uparrow^n)$ if we also imposed RI.}. $U(I\g)$ is the universal enveloping algebra of $I\g$, and $^\wedge$ denotes the degree completion, where elements of $\g^*$ are defined to be degree 1, and elements of $\g$ are degree zero; with this grading $T$ is degree-preserving.
We will show that when $n=k_1+k_2$, $T$ descends to a map
 $$T: \calAt\rightarrow (S(\g^*)_{\g}^{\otimes {k_1}}\otimes U(\g)^{\otimes {k_2}})^\wedge.$$

 Here $\calAt$ is the space of arrow diagrams on the skeleton of $k_1$ spheres and $k_2$  strings -- see Figure~\ref{WheelExample} for examples -- modulo all three $\aSTU$ relations, as well as the $CP$ relation at both caps of each capped strand,  as in Section~\ref{sec:arrows}. Note that the $RI$ relation is vacuous on red strings, and it follows from the CP relation on the twice-capped strands: short arrows can be commuted to the cap they point towards, hence they vanish by $CP$.

 As for the target, $S(\g^*)$ denotes the symmetric algebra of the linear dual of $\g$, and the subscript $\g$ denotes co-invariants under the co-adjoint action of $\g$: that is, the quotient by the image of the co-adjoint action. $U(\g)$ is the universal enveloping algebra of $\g$, and $^\wedge$ denotes the degree completion where elements of $\g^*$ are degree one and elements of $\g$ are degree zero, as before.

 \begin{figure}
\begin{tikzpicture}[scale=.8]
\draw[ultra thick](-.5,-.25)--(-.5,2.5);
\draw[fill=black] (-.5,-.25) circle (.1cm);
\draw[fill=black] (-.5,2.5) circle (.1cm);

\draw[ ->,thick,dotted ] (-.5,.5)--(.5,.5);
\draw[->,thick,dotted ] (-.5,1.25)--(0.15,1.25);
\draw[->,thick,dotted ] (-.5,2)--(.5,2);

\draw[->,thick,dotted ] (.5,.5)to [out=150, in=270] (.15,1.2);
\draw[->,thick,dotted ] (.15,1.3)to [out=90, in=190] (.5,1.95);
\draw[->,thick,dotted ] (.5,2) to [out=0, in=360] (.55,.5);
\end{tikzpicture} \hspace{2.5cm}
\begin{tikzpicture} [scale=.8]
\draw[ultra thick](-.5,-.25)--(-.5,2.5);
\draw[fill=black] (-.5,-.25) circle (.1cm);
\draw[fill=black] (-.5,2.5) circle (.1cm);
\draw[red,->](2,-.25)--(2,2.5);
\draw[red,->](2.3,-.25)--(2.3,2.5);

\begin{scope}[xshift=-1cm]
\draw[ultra thick](-.5,-.25)--(-.5,2.5);
\draw[fill=black] (-.5,-.25) circle (.1cm);
\draw[fill=black] (-.5,2.5) circle (.1cm);
\end{scope}

\draw[ ->,thick,dotted ] (-1.5,0)--(.5,0);
\draw[->,thick,dotted ] (-.5,.35)--(0.15,.35);
\draw[->,thick,dotted ] (-1.5,.7)--(.15,.7);
\draw[->,thick,dotted ] (-.5,1.05)--(.5,1.05);

\draw[->,thick,dotted ] (.5,0)to [out=150, in=270] (.15,.35);
\draw[->,thick,dotted ] (.15,.35)--(.15,.7);
\draw[->,thick,dotted ] (.15,.7) to [out=70, in=220] (.55,1.05);
\draw[->,thick,dotted ] (.55,1.05) to [out=0, in=0] (.5,0);

\draw[->,thick,dotted ] (-.5,1.5)--(2,1.5);
\draw[->,thick,dotted ] (-.5,2)--(2,2);
\node[]at (2.7,1.15){\Large $+$};
\end{tikzpicture}  \begin{tikzpicture} [scale=.8]
\draw[ultra thick](-.5,-.25)--(-.5,2.5);
\draw[fill=black] (-.5,-.25) circle (.1cm);
\draw[fill=black] (-.5,2.5) circle (.1cm);
\draw[red,->](2,-.25)--(2,2.5);
\draw[red,->](2.3,-.25)--(2.3,2.5);
%\draw[red,->](2.6,-.25)--(2.6,2.5);

\begin{scope}[xshift=1cm]
\draw[ultra thick](-.5,-.25)--(-.5,2.5);
\draw[fill=black] (-.5,-.25) circle (.1cm);
\draw[fill=black] (-.5,2.5) circle (.1cm);
\end{scope}

\draw[->,thick,dotted ] (-.5,.8)--(2.3,.8);

\draw[->,thick,dotted ] (.5,.5)--(2,.5);
\draw[->,thick,dotted ] (.5,.25)--(2,.25);
\draw[->,thick,dotted ] (.5,0)--(2,0);

\begin{scope}[yshift=.5cm]
\draw[->,thick, dotted ] (1.5,1.25)--(2,1.25);
\draw[->,thick,dotted ] (-.5,1.6)--(1.5,1.25);
\draw[->,thick,dotted ] (.5,.75)--(1.5,1.25);
\end{scope}

\end{tikzpicture}
\captionof{figure}{ An wheel with three spokes in $\protect \calAtSph$, and an element in ${ \protect \calAtFour }$.}\label{WheelExample}
\end{figure}

 \subsection{The Tensor Intpretation Map}\label{sec:mapT}
 The idea in the construction of $T:\mathcal A^w(\uparrow^n)\to (U(I\g)^{\otimes n})^{\wedge}$
  is that trivalent arrow vertices ``represent'' the Lie bracket in $\g$, and the relations in $\calA(\uparrow^n)$ translate to Lie algebra axioms.

 Denote the structure tensor of the Lie bracket of $\g$ by $[,]_\g\in\g^*\otimes\g^*\otimes \g$. Given a basis  $\{b_1,\cdots, b_m\}$ for $\g$ and the dual basis $\{b_1^*,\cdots, b_m^*\}$ for $\g^*$, write $[b_i,b_j]=\sum_{k=1}^m c_{ij}^k b_k$ for structure constants $c_{ij}^k \in \mathbb K$. Then
 $$[,]_{\g}=\sum_{i=1}^n c_{ij}^k \, b_i^*\otimes b_j^*\otimes b_k.$$
 Similarly, let $\id \in\g^*\otimes \g$ denote the identity tensor, given by $\id=\sum_{i=1}^m b_i^*\otimes b_i$.

 %The idea is to place elements of $\g^*$ along the capped black strands, elements of $\g$ along the red strands, constant coefficients on the arrows, and multiply upwards along the strands to end up with an element in $\hat{S}(\g^*)^{\otimes {k_1}}\otimes \hat{U}(\g)^{\otimes {k_2}}$.

 %Recall that $[,]_{\g}=\sum_{i=1}^n \phi_i\otimes\psi_i\otimes g_i$  as an element of $\g^*\otimes\g^*\otimes \g$. Choosing an ordered basis $\{b_1,\cdots, b_m\}$ for $\g$ and the dual basis $\{b_1^*,\cdots, b_m^*\}$ for $\g^*$, the map id$:\g\rightarrow\g$ is given by id=$\sum_{i=1}^m b_i^*\otimes b_i$.

  For an arrow diagram $D$ we first define $T(D)$ in the tensor algebra $\mathcal T (I\g)^{\otimes n}$ as follows, and as shown in Figure~\ref{StepsOfT}:
 \begin{enumerate}
\item  Place a copy of $\id \in \g^* \otimes \g$ on every single arrow, with $\g^*$ at the arrow tail, and $\g$ at the arrow head.

\item Place the structure tensor of the bracket $[,]_\g$ on all trivalent arrow vertices, with $\g^*$ components on the incoming arrows and the $\g$ component at the outgoing arrow.

\item Arrows which connect two trivalent vertices now have an element of $\g^*$ meeting an element of $\g$. Contract these by evaluating the element of $\g^*$ on the element of $\g$ to get a coefficient in $\mathbb K$. Multiply the constants together.

\item What remains is a linear combination of diagrams with elements of $\g^*$ and $\g$ along the strands. Multiplying these in $\mathcal T (I\g)$ along the orientation of the strands produces an element in  $\mathcal T({I\g}^{\otimes {n}})$.

 \end{enumerate}

Given the bases $\{b_1,...,b_m\}$ for $\g$ and $\{b_1^*,...,b_m^*\}$ for $\g^*$, a simple way to compute the value of $T$ on a given diagram $D$ is to some over all ways of labelling each arrow with an index $1,...,m$, form the corresponding product in $\mathcal T (I\g)^{\otimes n}$ taking a factor $b_i$ for each arrow tail labelled $i$ and $b_j^*$ for each arrow head labelled $j$; with the coefficient given by the product of $c_{rs}^t$ for each arrow vertex with incoming arrows labelled $r$ and $s$ and outgoing arrow labelled $t$. See Figures~\ref{ThreeSpokeWheel} and~\ref{ExampleT} for sample computations of $T$ for two arrow diagrams.

\begin{figure}
\[  \begin{tikzpicture} [scale=.8,baseline=.8cm]
\draw[ultra thick,](0,0)--(0,2);
\draw[thick,->](0,0)--(0,2.05);
%\draw[fill=black] (0,0) circle (.1cm);
%\draw[fill=black] (0,2) circle (.1cm);
\draw[ultra thick,](2,0)--(2,2);
\draw[ thick,->](2,0)--(2,2.05);
\draw[->,dotted, thick](0,1)--node[above]{``id''}(2,1);
\node[left] at (0,1){$\g^*$};
\node[right] at (2,1){$\g$};
\end{tikzpicture} := \sum_{i=1}^m \begin{tikzpicture} [scale=.8,baseline=.8cm]
\draw[ultra thick,](0,0)--(0,2);
\draw[thick,->](0,0)--(0,2.05);
%\draw[fill=black] (0,0) circle (.1cm);
%\draw[fill=black] (0,2) circle (.1cm);
%\draw[red,->](2,0)--(2,2);
\draw[ultra thick,](2,0)--(2,2);
\draw[ thick,->](2,0)--(2,2.05);
\draw[->,dotted,thick ](0,1)--(2,1);
\node[left] at (0,1){$b_i^*$};
\node[right] at (2,1){$b_i$};
\end{tikzpicture} \]

\[  \begin{tikzpicture} [scale=.8,baseline=-.1cm]
\draw[->,dotted,thick ](-1,1)--(-.02,.02);
\draw[->,dotted,thick ](-1,-1)--(-.02,-.02);
\draw[->,dotted,thick ](0,0)--(1.25,0);
\node[] at(.25,.5) {$[,]_\g$};

\end{tikzpicture}\hspace{2mm} := \sum_{i,j,k=1}^m c_{ij}^k  \begin{tikzpicture} [scale=.8,baseline=-.1cm]
\draw[->,dotted,thick ](-1,1)--(-.02,.02);
\draw[->,dotted,thick ](-1,-1)--(-.02,-.02);
\draw[->,dotted,thick ](0,0)--(1.25,0);
\node[left] at(-1,1) {$b_i^*$};
\node[left] at(-1,-1) {$b_j^*$};
\node[right] at(1.25,0) {$g_k$};

\end{tikzpicture}
\]
\captionof{figure}{Steps (1) and (2) in computing $T$.}\label{StepsOfT}
\end{figure}

\begin{figure}
\[\sum_{e,f,h,i,j,k=1}^m  c_{ke}^f\cdot c_{fh}^i \cdot c_{ij}^k
\begin{tikzpicture}[scale=.8,baseline=0cm]
\draw[ultra thick](-1.5,-2)--(-1.5,2);
\draw[ thick, ->](-1.5,-2)--(-1.5,2.05);
%\draw[fill=black] (-1.5,-2) circle (.1cm);
%\draw[fill=black] (-1.5,2) circle (.1cm);

\draw[ ->,dotted,thick ] (-1.5,-1.5)--node[below]{\Small $e$}(.5,-1.5);
\draw[->,dotted,thick ] (-1.5,0)--node[below]{\Small $h$}(0,0);
\draw[->,dotted,thick ] (-1.5,1.5)--node[above]{\Small $j$}(.5,1.5);

\draw[->,dotted,thick ] (.5,-1.5)to [out=150, in=270] (0,-.03);
\draw[->,dotted,thick ] (0,.03)to [out=90, in=210] (.5,1.43);
\draw[->,dotted,thick ] (.5,1.5) to [out=0, in=90] node[right]{\Small$k$} (1.4,-.05);
\draw[->,dotted,thick ] (1.4,-.05) to [out=270, in=360]node[right]{} (.5,-1.5);

\draw[->,dotted,thick ] (.05,-.8);
\draw[->,dotted,thick ] (.05,.8);

\node[] at (.3,-.45){\Small $f$};
\node[] at (.4,1){\Small $i$};
\end{tikzpicture}
= \sum_{c,e,f,i,j,k=1}^m c_{ke}^f\cdot c_{fh}^i \cdot c_{ij}^k \cdot b_e^* \cdot b_h^* \cdot b_j^*
 \in \mathcal T(I\g)
\]

\captionof{figure}{Example computation of $T$ for a wheel with three spokes.}\label{ThreeSpokeWheel}
\end{figure}

 \begin{figure}
\[
\begin{tikzpicture}[scale=.75,baseline=.5cm]

\draw[ultra thick](-.5,-.5)--(-.5,2);
\draw[thick,->](-.5,-.5)--(-.5,2.05);
%\draw[fill=black] (-.5,-.5) circle (.1cm);
%\draw[fill=black] (-.5,2) circle (.1cm);
\draw[ultra thick](2,-.5)--(2,2);
\draw[ thick,->](2,-.5)--(2,2.05);

\draw[ ->,dotted,thick ] (-.5,0)--(.55,0);
\draw[->,dotted,thick ] (-.5,1.05)--(.55,1.05);

\draw[,dotted,thick ] (.5,0)to [out=150, in=270] (.15,.35);
\draw[->,dotted,thick ] (.15,.35)--(.15,.7);
\draw[,dotted,thick ] (.15,.7) to [out=70, in=180] (.55,1.05);
\draw[,dotted,thick ] (.55,1.05) to [out=0, in=0] (.5,0);
\draw[dotted,thick,->] (.83,.55)--(.83,.54);

\draw[->,dotted,thick ] (-.5,1.5)--(2,1.5);
\draw[|-|](-.6,-1)--node[below]{$D$}(2.1,-1);
\end{tikzpicture} \rightsquigarrow
\!\!\!\!\!  \sum_{f,i,j,k,l=1}^m
 \!\!\!\!\!\! c_{kf}^i \cdot c_{ij}^k
\begin{tikzpicture}[scale=.75,baseline=.5cm]

\draw[ultra thick](-.5,-.5)--(-.5,2);
\draw[thick,->](-.5,-.5)--(-.5,2.05);
%\draw[fill=black] (-.5,-.5) circle (.1cm);
%\draw[fill=black] (-.5,2) circle (.1cm);
\draw[ultra thick](2,-.5)--(2,2);
\draw[ thick,->](2,-.5)--(2,2.05);

\draw[ ->,dotted,thick ] (-.5,0)--(.55,0);
\node[below]at (0,.05) {\tiny $j$};
%\node[left]at (-.5,0) {\Small $f$};
\draw[->,dotted,thick ] (-.5,1.05)--(.55,1.05);
\node[above] at (0,1.0) {\tiny $f$};
%\node[left]at (-.5,1.05) {\Small $j$};

\draw[,dotted,thick ] (.5,0)to [out=150, in=270] (.15,.35);
\draw[->,dotted,thick ] (.15,.35)--(.15,.7);
\draw[,dotted,thick ] (.15,.7) to [out=70, in=180] (.55,1.05);
\draw[,dotted,thick ] (.55,1.05) to [out=0, in=0] node[right]{\Small $k$}(.5,0);
\draw[dotted,thick,->] (.83,.55)--(.83,.54);

%\draw[->,dotted,thick ] (.5,0)to [out=150, in=270] (.15,.35);
%\draw[,dotted,thick ] (.15,.7) to [out=70, in=220] (.55,1.05);
%\draw[,dotted,thick ] (.55,1.05) to [out=0, in=0] node[right]{\Small $k$} (.5,0);
%\draw[,dotted,thick ] (.15,.35)--(.15,.7);
%\draw[dotted,thick,->] (.83,.55)--(.83,.54);

\node[] at (-.2,.5) {\Small $i$};

\draw[->,dotted,thick ] (-.5,1.6)--node[above]{\Small $l$}(2,1.6);
\end{tikzpicture}
\rightsquigarrow
\!\!\!\!\! \sum_{f,i,j,k,l=1}^m \!\!\! c_{kf}^i \cdot c_{ij}^k\cdot (b_f^*b_j^*b_l^* \otimes b_l) \in \mathcal T(I\g) \otimes \mathcal T(I\g)
\]

\caption{An example for computing T.}\label{ExampleT}
\end{figure}

\begin{lem}\label{lem:welldef}
$T$ descends to a well defined map $\mathcal A^w (\uparrow^n) \rightarrow (U(I\g)^{\otimes n})^\wedge$.
\end{lem}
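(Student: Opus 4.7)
The plan is to show two factorizations simultaneously: that $T$, as defined on formal linear combinations of arrow diagrams into $\mathcal{T}(I\g)^{\otimes n}$, descends across both the three $\aSTU$ relations on the source and the canonical quotient $\mathcal{T}(I\g)^{\otimes n} \twoheadrightarrow U(I\g)^{\otimes n}$ on the target. I would handle these together, because the $\aSTU$ relations translate precisely into the defining commutator relations of $U(I\g)$ on each individual strand.

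First, the grading: placing $\g^*$ in degree $1$ and $\g$ in degree $0$ makes $T$ degree-preserving, since each arrow tail on a strand produces a $\g^*$-factor and each arrow head a $\g$-factor. Thus images of infinite sums automatically land in the degree completion once well-definedness is established in each finite degree. Next, any two orderings of the arrow endings along a single strand differ by a sequence of adjacent transpositions, and each such transposition is governed by exactly one of $\aSTU_1$ (two heads), $\aSTU_2$ (head and tail), or $\aSTU_3$ (two tails, i.e.\ $TC$). For $\aSTU_3$ both endings carry $\g^*$, the ``correction term'' is trivial, matching $[\g^*,\g^*]_{I\g}=0$ since $\g^*$ is abelian in $I\g$. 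For $\aSTU_1$ both endings carry $\g$; the trivalent vertex is decorated with $[,]_\g$, whose two $\g^*$-slots contract against the $\g$-ends delivered by the incoming arrows and whose $\g$-slot travels out on the outgoing arrow to the strand, producing $[x_1,x_2]_\g$ and matching $[(0,x_1),(0,x_2)]_{I\g}=(0,[x_1,x_2]_\g)$. For $\aSTU_2$ the analogous contraction at the vertex produces the coadjoint action $x\cdot\varphi\in\g^*$, matching $[(0,x),(\varphi,0)]_{I\g}=(x\cdot\varphi,0)$.

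Combining the three cases, $xy-yx-[x,y]_{I\g}$ lies in the image of the $\aSTU$ relations under $T$ for every pair $x,y\in I\g$ placed at adjacent positions on a single strand. Therefore $T$ simultaneously factors through the $\aSTU$ quotient on the source and through the universal enveloping algebra quotient on the target, defining the required map $\mathcal{A}^w(\uparrow^n)\to(U(I\g)^{\otimes n})^\wedge$. The main technical hurdle is the $\aSTU_2$ verification: the index bookkeeping relies on $(x\cdot\varphi)(y)=-\varphi([x,y])$ expressed via the structure constants $c_{ij}^k$, and both the sign and the ordering of the two incoming slots of $[,]_\g$ must be tracked carefully against the arrow-direction conventions of the $\aSTU_2$ figure. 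Vertex antisymmetry $\aAS$, which is not separately imposed, is a consequence of $\aSTU$ and corresponds on the tensor side to the antisymmetry of $[,]_\g$, so no further check is needed; basis-independence is automatic since $\id$ and $[,]_\g$ are intrinsic tensors.
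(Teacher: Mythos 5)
Your proof is correct and follows essentially the same route as the paper's: the paper likewise checks that $\aSTU_1$ and $\aSTU_2$ map to the commutator relations $[x_i,x_j]=x_ix_j-x_jx_i$ and $[\varphi_i,x_j]=\varphi_ix_j-x_j\varphi_i$ in $U(I\g)$, that $\aSTU_3=TC$ reflects the abelianness of $\g^*$, and that $T$ is degree-preserving so the completions agree. Your write-up simply spells out the vertex-contraction and sign bookkeeping that the paper leaves implicit.
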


 \begin{proof}
 We need to check that relations in $\mathcal A^w(\uparrow^n)$ are mapped to relations in $(U(I\g)^{\otimes n})^\wedge$. This is indeed the case: $\aSTU_1$ and $\aSTU_2$
 are mapped to the relations $[x_i,x_j]=x_ix_j-x_jx_i$, and $[\varphi_i,x_j]=\varphi_ix_j-x_j\varphi_i$. $\aSTU_3=TC$ is the fact that $\g^*$ is abelian. It is also easy to check that the $T$ is degree preserving, so the completions on both sides agree.
 \end{proof}

 \begin{prop} When $k_1+k_2=n,$ $T$ further descends to a well defined map
 $$T: \calAt\rightarrow \left(S(\g^*)_{\g}^{\otimes {k_1}}\otimes U(\g)^{\otimes {k_2}}\right)^\wedge.$$
 \end{prop}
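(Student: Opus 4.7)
By Lemma~\ref{lem:welldef}, the map $T\colon \mathcal A^w(\uparrow^n)\to (U(I\g)^{\otimes n})^\wedge$ is well-defined and degree-preserving. My plan is to factor $T$ through the PBW decomposition of $U(I\g)$ and then check that the extra relations of $\calAt$ beyond those of $\mathcal A^w(\uparrow^n)$ — namely $TF$ on red strings and the two $CP$ relations at the caps of each capped strand — descend to zero. Since $I\g = \g^* \rtimes \g$ with $\g^*$ abelian, the Poincar\'e--Birkhoff--Witt theorem gives a $\g$-equivariant vector-space isomorphism $U(I\g)\cong S(\g^*)\otimes U(\g)$; let $\pi_c\colon U(I\g)\twoheadrightarrow S(\g^*)$ be the quotient by the right ideal $U(I\g)\cdot\g$, and $\pi_r\colon U(I\g)\twoheadrightarrow U(\g)$ the quotient by the two-sided ideal generated by $\g^*$. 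Composing $T$ with $(q\circ\pi_c)^{\otimes k_1} \otimes \pi_r^{\otimes k_2}$, where $q\colon S(\g^*) \twoheadrightarrow S(\g^*)_\g$ is the coinvariants quotient, produces a candidate map $\tilde T$ into $(S(\g^*)_\g^{\otimes k_1}\otimes U(\g)^{\otimes k_2})^\wedge$.

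Next I would verify the three relation-types in turn. For $TF$: any diagram with an arrow tail on a red string contributes a $\g^*$-factor in the corresponding tensor slot, lying in the kernel of $\pi_r$. For $CP$ at a top cap: a diagram with an arrow head adjacent to the top cap gives a $\g$-factor at the rightmost position of that slot, hence lies in $U(I\g)\cdot\g = \ker\pi_c$. For $CP$ at a bottom cap: the contribution is $x\,\varphi_1\cdots\varphi_m$ for some $x\in\g$ and $\varphi_i\in\g^*$; using the defining bracket $[\varphi,x] = -x\cdot\varphi$ of $I\g$ from \eqref{eq:Ig} (equivalently $x\varphi = \varphi x + x\cdot\varphi$), iterating gives
\[
x\,\varphi_1\cdots\varphi_m \;=\; \varphi_1\cdots\varphi_m\,x \;+\; \sum_{j=1}^{m} \varphi_1\cdots\varphi_{j-1}(x\cdot\varphi_j)\varphi_{j+1}\cdots\varphi_m.
\]
Under $\pi_c$ the first summand vanishes and the remainder is the coadjoint action $x\cdot(\varphi_1\cdots\varphi_m)\in S(\g^*)$, which is killed by $q$ by definition of the coinvariants.

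The main obstacle, and the only nontrivial bookkeeping, is to confirm that the bracket $[,]_\g$ carried by the trivalent vertices of $T$ is exactly the one governing the commutator in $U(I\g)$ used above, so that the $\aSTU$ relations of $\mathcal A^w(\uparrow^n)$ already incorporated by Lemma~\ref{lem:welldef} produce precisely the identity $x\varphi = \varphi x + x\cdot\varphi$ on which the $CP$-at-bottom verification rests. This reduces to a direct reading of Section~\ref{sec:mapT}: trivalent vertices carry $[,]_\g$, the identity arrow carries $\id = \sum_i b_i^*\otimes b_i$, and contracting these on an arrow connecting a vertex to a skeleton reproduces exactly the structure constants $c_{ij}^k$ of $I\g$ appearing in \eqref{eq:Ig}. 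Once this is checked, $\tilde T$ descends uniquely to a well-defined map from $\calAt$ to the stated target, as claimed.
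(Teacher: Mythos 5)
Your argument is correct and follows essentially the same route as the paper: both identify the capped-strand target as the double quotient $\g\backslash U(I\g)/\g\cong (S\g^*)_\g$ via PBW and then check that the $CP$ relations (heads at either cap) and the tails-forbidden condition on strings land in the kernels of the corresponding projections. The only cosmetic difference is that you carry out the commutation $x\,\varphi_1\cdots\varphi_m=\varphi_1\cdots\varphi_m\,x + x\cdot(\varphi_1\cdots\varphi_m)$ explicitly, where the paper simply invokes the statement that the PBW isomorphism $U(I\g)\cong S(\g^*)\otimes U(\g)$ holds as one of $\g$-bimodules.
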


 \begin{proof}
 Before we prove the statement, we establish the following crucial Lie theory fact: $$\g \backslash U(I\g) / \g \cong (S\g^{*})_{\g}.$$ The notation on the left hand side means factoring out by the multiplication action of $\g$ on the left and on the right. First of all, as
 vector spaces $U(I\g)\cong U(\g^*)\otimes U(\g)\cong S(g^*)\otimes U(\g)$.
 The Poincare--Birkhoff--Witt (PBW) Theorem implies that this isomorphism also holds as $\g$-bimodules, that is, the left multiplication action of $\g$ on $U(I\g)$ is the
 coadjoint action on $S(\g^*)$, and the right multiplication action is simply right multiplication in $U(\g)$. Hence, factoring out by right multiplication gives $S(\g^*)$ and then modding out by left multiplication yields $(S\g^{*})_{\g}$, as claimed.

 We now re-phrasing the statement as a commutative diagram:
 \[
 \begin{tikzcd}
  \mathcal A(\uparrow^n) \arrow[r, "T"] \arrow[d, two heads,"\pi_1"]
    & \left(U(I\g)^{\otimes k_1}\otimes U(I\g)^{\otimes k_2}\right)^\wedge \arrow[d, two heads, "\pi_2"] \\
  \calAt \arrow[r, dashed, "\exists T"]
& \left(S(\g^*)_{\g}^{\otimes {k_1}}\otimes U(\g)^{\otimes {k_2}}\right)^\wedge \end{tikzcd}
\]
 Here the projection $\pi_1$ is imposing two cap relations on each of the first $k_{1}$ strands, and killing all arrow diagrams with any arrow tails on strands $k_{1}+1$ through $n$. The map $\pi_2$ is the projection $U(I\g) \to \g \backslash U(I\g) / \g$ in the first $k_2$ tensor factors, and the projection given by setting $\varphi=0$ for any $\varphi \in \g^*$ in the last $k_2$ tensor factors.
One then defines the bottom horizontal $T$ map in the obvious way: taking any pre-image of $\pi_1$, and applying $T$ followed by $\pi_2$. For this to be well-defined, we need to show that any element in the kernel of $\pi_1$ is killed by the composition of $\pi_2\circ T$.

On the first $k_{1}$ strands, the cap relations assert that an arrow head at the very beginning or the very end of the strand is zero. The map $T$ assigns elements of $\g$ to arrow heads. Hence, the projection $\pi_2$, which factors out by the multiplication action of $\g$ on $U(I\g)$ both on the left and on the right makes the bottom map well defined.

 On strands $k_{1} +1$ through $n$, $\pi_1$ kills all arrow tails. Under the tensor interpretation map $T$, arrow tails ending on a given strand translate to elements of $\g^{*}$ in a product in the corresponding $U(I\g)$ tensor factor. Hence the quotient map which sets elements of $\g^{*}$ to be zero makes the bottom horizontal map well defined, and this is $\pi_2$.
  \end{proof}

 The following Proposition will play a crucial role later; we present it here as it is based on a similar principle as the proof above:

  \begin{prop}\label{prop:invariance} The image of $T$ is $\g$-invariant, where $\g$ acts via the adjoint action on
each of the $U\g$ factors (that is, sum over acting on each one):
$T(D) \in ({S}\g^*)_\g^{\otimes k_1}\otimes \left(({U}\g)^{\otimes k_2}\right)^{\g}$ for any diagram $D \in \calAt$.
\end{prop}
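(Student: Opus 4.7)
The plan is to lift $D \in \calAt$ via $\pi_1$ to a diagram $\tilde D \in \mathcal A^w(\uparrow^n)$, establish that $T(\tilde D)$ is $\g$-invariant under the diagonal adjoint action on $U(I\g)^{\otimes n}$, and then push the invariance down through $\pi_2$ using the commutative square from the previous proposition. For the upstairs invariance, I would observe that $T(\tilde D)$ is assembled from two $\g$-invariant ingredients: the identity tensor $\operatorname{id} \in \g^*\otimes\g$ (invariant by duality) and the bracket structure tensor $[,]_\g \in \g^*\otimes\g^*\otimes\g$ (invariant precisely by the Jacobi identity). The construction of $T$ amounts to (a) placing copies of these two tensors at the arrows and trivalent vertices of $\tilde D$; (b) contracting $\g$ with $\g^*$ where arrows meet vertices, via the canonical $\g$-invariant pairing $\g\otimes\g^*\to\mathbb K$; and (c) multiplying the remaining per-strand factors in $U(I\g)$, whose multiplication is $\g$-equivariant with respect to the adjoint action. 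Each step preserves $\g$-invariance, so $T(\tilde D) \in \bigl(U(I\g)^{\otimes n}\bigr)^\g$ for the diagonal adjoint action; infinitesimally, $\sum_{j=1}^n \operatorname{ad}(x)_j\,T(\tilde D) = 0$ for every $x \in \g$.

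Applying $\pi_2$ to this identity does the rest. On each of the first $k_1$ tensor factors, $\pi_2$ projects $U(I\g)$ onto $\g\backslash U(I\g)/\g \cong (S\g^*)_\g$, and on this two-sided quotient the adjoint action is trivialized by definition: $[x,y] = xy-yx$ vanishes modulo $\g\cdot U(I\g)$ and $U(I\g)\cdot\g$. On each of the last $k_2$ factors, $\pi_2$ intertwines the adjoint action on $U(I\g)$ with the adjoint action on $U(\g)$. Thus applying $\pi_2$ collapses the infinitesimal identity to $\sum_{j=k_1+1}^n \operatorname{ad}(x)_j\,T(D) = 0$, which is exactly the required $\g$-invariance on the $U(\g)$ factors. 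The one place to tread carefully is verifying that tensor contraction and enveloping-algebra multiplication preserve $\g$-invariance; both follow from the standard $\g$-equivariance of the evaluation pairing and from the compatibility of the adjoint action with multiplication in $U(I\g)$, so I do not expect a serious obstacle.
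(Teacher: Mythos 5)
Your argument is correct and is essentially the paper's own first proof: the paper likewise observes that $T$ is built by composing the ($\g$-invariant) identity and bracket structure tensors in an invariant way, and your version merely spells out the descent of the resulting invariance through $\pi_2$ (trivialization of the adjoint action on the two-sided quotients $\g\backslash U(I\g)/\g$, equivariance of the projection $U(I\g)\to U(\g)$ on the remaining factors). The paper additionally records a second, more diagrammatic proof via the \emph{head invariance} of arrow diagrams, which you do not need.
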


\begin{proof}
To give a short argument, the structure tensors of the identity and the Lie bracket are invariant elements, which are composed in an invariant way when computing the
image under $T$ of an arrow diagram, hence the result is $\g$-invariant.

A more hands-on proof uses the {\em head invariance} property of arrow diagrams:  the relevant incarnation is shown in
Figure~\ref{fig:headinvariance}; the property in general is discussed in \cite[Remark 3.14]{WKO2}. In words, the sum over all thin red strands of attaching an additional arrow head at the bottom of the strand gives the same result as the sum over all thin red strands of attaching the arrow head at the top.

To prove the Proposition, we apply $T$ on both sides of the equality of Figure~\ref{fig:headinvariance}. Attach an additional arrow head at the bottom of the $i$-th red strand of an arrow diagram $D$, where the arrow tail lies on an additional strand as in Figure~\ref{fig:headinvariance}; call this new diagram $D_{i}$. Compare $T(D_{i})$ with $T(D)$:
$$T(D_{i})=\sum_{j=1}^{m}b_{j}^{*}\otimes (b_{j}\times_{i}T(D)),$$
where $\times_{i}$ denotes multiplying on the left in the $i$-th tensor factor.

Similarly, let $D^{i}$ denote the arrow diagram obtained from $D$ by attaching an arrow head at the top of the $i$-th red strand.
$$T(D^{i})=\sum_{j=1}^{m}b_{j}^{*}\otimes (T(D)\times_{i}b_{j}),$$
where multiplication is now on the right. Hence,
$$T(D_{i})-T(D^{i})=\sum_{j=1}^{m}b_{j}^{*}\otimes (b_{j}\cdot_{i}T(D)),$$
where $\cdot_{i}$ denotes the adjoint action on the $i$-th $U\g$ tensor factor. Due to the head invariance property we have
$$0=\sum_{i=1}^{k_{2}}T(D_{i})-T(D^{i})=\sum_{j=1}^{m}b_{j}^{*}\otimes \left(\sum_{i=1}^{k_{2}} b_{j}\cdot_{i}T(D)\right).$$
The right hand side is only zero if $\sum_{i=1}^{k_{2}} b_{j}\cdot_{i}T(D)=0$ for each $j=1...m,$ which is exactly the statement of the Proposition.
\end{proof}

 \begin{figure}
\[
 \begin{tikzpicture}[baseline=.5cm]

 \draw[ultra thick ] (-.5,-1)--(-.5,1.5);
  \draw[->, thick ] (-.5,-.5)--(-.5,1.55);

 \draw[ultra thick] (0,0)--(0,1);
  \draw[ultra thick] (.5,0)--(.5,1);
   \draw[ultra thick] (1.25,0)--(1.25,1);
   \draw[fill=black] (0,0) circle (.08cm);
    \draw[fill=black] (0,1) circle (.08cm);
       \draw[fill=black] (.5,0) circle (.08cm);
    \draw[fill=black] (.5,1) circle (.08cm);
     \draw[fill=black] (1.25,0) circle (.08cm);
    \draw[fill=black] (1.25,1) circle (.08cm);

 \draw[thick, red, ->] (1.5,0)--(1.5,1);
    \draw[thick,red,] (3,-.55)to [out=180, in=270](1.5,0);
     \draw[thick,red,] (1.5,1)to [out=90, in=180](3,1.55);

  \draw[thick, red, ->] (1.75,0)--(1.75,1);
     \draw[thick,red,] (3,-.4)to [out=180, in=270](1.75,0);
         \draw[thick,red,] (1.75,1)to [out=90, in=180](3,1.4);

   \draw[thick, red, ->] (2,.1)--(2,1);
   \draw[thick,red,] (3,-.25)to [out=180, in=270](2,.1);
    \draw[thick,red,] (2,1)to [out=90, in=180](3,1.25);

   \draw[thick, blue] (-.25,.25) rectangle (2.25, .75);
  \node[blue] at (.85,.5) {$D$};

  \draw[fill=white] (2.5,-.65) rectangle  (2.75,-.15);
  \draw[ dotted,thick ] (-.5,-.9)--(2.62,-.9)--(2.62,-.65);
   \draw[ dotted,thick ,->] (2.62,-.66)--(2.62,-.65);
   \node[] at (2.63,-.4) {\tiny +};

 \end{tikzpicture}\hspace{5mm}{\Huge{=}} \hspace{5mm}\begin{tikzpicture}[baseline=.5cm]

  \draw[ultra thick ] (-.5,-1)--(-.5,1.5);
  \draw[->, thick ] (-.5,-.5)--(-.5,1.55);

  \draw[ultra thick] (0,0)--(0,1);
  \draw[ultra thick] (.5,0)--(.5,1);
   \draw[ultra thick] (1.25,0)--(1.25,1);
   \draw[fill=black] (0,0) circle (.08cm);
    \draw[fill=black] (0,1) circle (.08cm);
       \draw[fill=black] (.5,0) circle (.08cm);
    \draw[fill=black] (.5,1) circle (.08cm);
     \draw[fill=black] (1.25,0) circle (.08cm);
    \draw[fill=black] (1.25,1) circle (.08cm);

        \draw[thick, red, ->] (1.5,0)--(1.5,1);
    \draw[thick,red,] (3,-.55)to [out=180, in=270](1.5,0);
     \draw[thick,red,] (1.5,1)to [out=90, in=180](3,1.55);

  \draw[thick, red, ->] (1.75,0)--(1.75,1);
     \draw[thick,red,] (3,-.4)to [out=180, in=270](1.75,0);
         \draw[thick,red,] (1.75,1)to [out=90, in=180](3,1.4);

   \draw[thick, red, ->] (2,.1)--(2,1);
   \draw[thick,red,] (3,-.25)to [out=180, in=270](2,.1);
    \draw[thick,red,] (2,1)to [out=90, in=180](3,1.25);

 %\draw[thick, red, ->] (1.5,0)--(1.5,1);
 % \draw[thick, red, ->] (1.75,0)--(1.75,1);
  % \draw[thick, red, ->] (2,.1)--(2,1);
 %  \draw[thick,red,->] (3,-.25)to [out=180, in=270](2,.1);
 %   \draw[thick,red,->] (2,1)to [out=90, in=180](3,1.25);

   \draw[thick, blue] (-.25,.25) rectangle (2.25, .75);
  \node[blue] at (.85,.5) {$D$};

  \draw[ dotted,thick ,->] (1.75,-.9)to [out=0, in=270](2.75,1.15);
  \draw[thick, dotted] (-.5,-.9)--(1.75,-.9);
    \draw[fill=white] (2.63,1.15) rectangle  (2.88,1.65);

    \node[] at (2.75, 1.4) {\tiny +};

 \end{tikzpicture}      \]

\caption{The {\em head invariance} property in $\protect \calAt$. }\label{fig:headinvariance}
 \end{figure}

 \subsection{The tensor statement}
 To obtain the tensor statement, we simply apply the map $T$ to the diagrammatic statement of Figure~\ref{fig:comps}:
 $$Z\left(
  \begin{tikzpicture}[scale=.17,, baseline=-1.3mm]
\draw[thick](0,-1)--(0,1);
\draw[red](-1,0)--(-.15,0);
\draw[red] (.15,0)--(1,0);
\filldraw[fill=black, draw=black] (0,1) circle (2mm) ;
\filldraw[fill=black, draw=black] (0,-1) circle (2mm) ;
\end{tikzpicture} \right)\# Z\left( \begin{tikzpicture}[scale=.17,, baseline=-1.3mm]
\draw[thick](0,-1)--(0,1);
\draw[red](-1,0)--(-.15,0);
\draw[red] (.15,0)--(1,0);
\filldraw[fill=black, draw=black] (0,1) circle (2mm) ;
\filldraw[fill=black, draw=black] (0,-1) circle (2mm) ;

\end{tikzpicture} \right)=  u^2 Z \left(
  \begin{tikzpicture}[scale=.17,, baseline=-1.3mm]
\draw[thick](0,-1)--(0,1);
\draw[red](-1,0)--(-.15,0);
\draw[red] (.15,0)--(1,0);
\filldraw[fill=black, draw=black] (0,1) circle (2mm) ;
\filldraw[fill=black, draw=black] (0,-1) circle (2mm) ;
\end{tikzpicture}  \right).$$
Since
$Z( \begin{tikzpicture}[scale=.17,, baseline=-1.3mm]
\draw[thick](0,-1)--(0,1);
\draw[red](-1,0)--(-.15,0);
\draw[red] (.15,0)--(1,0);
\filldraw[fill=black, draw=black] (0,1) circle (2mm) ;
\filldraw[fill=black, draw=black] (0,-1) circle (2mm) ;
\end{tikzpicture})$
is an element of $\calAone$,
 $T(Z(\!\raisebox{-1.3mm}{\threadedsphere}\!))=:\Upsilon \in ({S}(\g^*)_\g\otimes {U}(\g))^\wedge.$

 The connected sum operation in $\calA$ is concatenation along the red strings. Under the tensor interpretation map $T$, this translates to multiplication in ${U}(\g)$, while the ${S}(\g^*)$ components remain separate tensor factors. Hence, with notation explained below,

 $$T\left(Z(\!\raisebox{-1.3mm}{\threadedsphere}\!)\# Z(\!\raisebox{-1.3mm}{\threadedsphere}\!)\right)=\up^{13}\up^{23} \in ({S}(\g^*)_\g^{\otimes 2}\otimes {U}(\g))^\wedge.$$
 Here $\up^{13}:=\phi^{13}(\up)$, where $\phi^{13}(x\otimes y):=x\otimes 1\otimes y$, and $\up^{23}:=\phi^{23}(\up)$, where $\phi^{23}(x\otimes y):=1\otimes x\otimes y$.

 On the right side, the unzip operation sends an arrow ending on the unzipped strand to a sum of two arrows, one ending on each daughter strand. Under the tensor interpretation map $T$, this is sent to the Hopf algebra coproduct $\Delta$ of $\hat S(\g^*)_\g$ given by $\Delta(\varphi)=\varphi \otimes 1 + 1 \otimes \varphi$ for primitive elements $\varphi \in \g^*_\g$, as explained in Figure~\ref{fig:comultdiagram}.
 In other words, $T\circ u=\Delta\circ T$, and therefore

 $$T\left(u^2 Z \left(
  \!\raisebox{-1.3mm}{\threadedsphere}\!  \right)\right)=(\Delta \otimes 1)\up \in \left({S}(\g^*)_\g^{\otimes 2}\otimes {U}(\g)\right)^\wedge.$$

\begin{figure}
\begin{center}
\begin{tikzpicture}[scale=1, baseline=-1mm  ]
\draw[ultra thick](0,0)--(0,1);
\filldraw[fill=black, draw=black] (0,0) circle (.8mm) ;
\filldraw[fill=black, draw=black] (0,1) circle (.8mm) ;
\draw[dotted,thick ,->](0,.5)--(1,.5);
\node[left] at (0,.5) { $b_j^*$};
\draw[blue] (1,0) rectangle(1.5,1);

\draw[thick,->] (2.5,.5)--node[above]{$u_i$}(4,.5);

\begin{scope}[yshift=-3.5cm]
\draw[thick,->] (2.5,.5)--node[above]{$\Delta$}(4,.5);
\end{scope}

\begin{scope}[xshift=5cm]
\draw[ultra thick](0,0)--(0,1);
\filldraw[fill=black, draw=black] (0,0) circle (.8mm) ;
\filldraw[fill=black, draw=black] (0,1) circle (.8mm) ;
\draw[ultra thick](.5,0)--(.5,1);
\filldraw[fill=black, draw=black] (.5,0) circle (.8mm) ;
\filldraw[fill=black, draw=black] (.5,1) circle (.8mm) ;
\draw[dotted,thick ,->](0,.5)--(1.5,.5);
\node[left] at (0,.5) { $b_j^*$};
\draw[blue] (1.5,0) rectangle(2,1);
\node[]at (2.5,.5){$+$};
\end{scope}

\begin{scope}[xshift=8cm]
\draw[ultra thick](0,0)--(0,1);
\filldraw[fill=black, draw=black] (0,0) circle (.8mm) ;
\filldraw[fill=black, draw=black] (0,1) circle (.8mm) ;
\draw[ultra thick](.5,0)--(.5,1);
\filldraw[fill=black, draw=black] (.5,0) circle (.8mm) ;
\filldraw[fill=black, draw=black] (.5,1) circle (.8mm) ;
\draw[dotted,thick ,->](.5,.5)--(1.5,.5);
\node[right] at (0,.5) { $b_j^*$};
\draw[blue] (1.5,0) rectangle(2,1);
\end{scope}

\draw[thick,->] (.5,-.5)--node[right]{$T$}(.5,-2);
\draw[thick,->] (7.5,-.5)--node[right]{$T$}(7.5,-2);

\draw[blue](-1,-3)--(-.15,-3);
\node[] at (.5,-2.8){$\otimes b_j^*\otimes $};
\draw[blue](1,-3)--(1.75,-3);

\begin{scope}[xshift=7cm]
\draw[blue](-1.25,-3)--(-2,-3);
\node[] at (.5,-2.8){$\otimes (b_j^*\otimes 1 + 1\otimes b_j^* )\otimes $};
\draw[blue](2.25,-3)--(3,-3);
\end{scope}

\begin{scope}[xshift=7cm,yshift=-1cm]
\draw[blue](-1,-3)--(-.15,-3);
\node[] at (.5,-2.8){$\otimes \Delta b_j^*\otimes $};
\draw[blue](1,-3)--(1.75,-3);
\end{scope}

\end{tikzpicture}
\end{center}
\captionof{figure}{The tensor interpretation map intertwines unzip with co-multiplication.}\label{fig:comultdiagram}
\end{figure}

 In summary, we obtain the Tensor Statement of Figure~\ref{fig:tensorstate}. Note the similarity with the triangularity equation $R^{13}R^{23}=(\Delta\otimes 1)R$ of quasi-triangular Hopf algebras, yet this is not a triangularity equation as $S(\g)_\g$ and $U\g$ are not a dual pair.

 \begin{figure}[h!]
 \begin{tikzpicture}
 \draw[thick](-5.2,-1.2) rectangle (5.2,1.5);
\node[]at (0,1){\Large \textbf{Tensor Statement}};

 \node[]at (0,-.1){\Large $\displaystyle{\up^{13}\up^{23}=(\Delta\otimes 1)\up} \quad \text{ in }\quad ({S}(\g^*)_\g^{\otimes 2}\otimes {U}(\g))^\wedge$};

%\node[]at (-.2,-1.45){\small $T(Z(\mathbbm{1}))\# T(Z(\mathbbm{1})) =T(u^2Z(\mathbbm{1}))$};

 \end{tikzpicture}
 \captionof{figure}{The tensor statement.}\label{fig:tensorstate}
 \end{figure}

 \section{The Duflo Isomorphism}

 There is a pairing $\g^*\times \g\rightarrow \mathbb{K}$ given by the evaluation. This extends to a pairing $S\g^*\times S\g\rightarrow \mathbb{K}$ in the following way. Given a monomial $\varphi_1\cdot ... \cdot \varphi_k \in S\g^*$ with $\varphi_i\in \g^*$ and a monomial of the same degree $x_1\cdot... \cdot x_k \in S\g$ with $x_j\in\g$,
\begin{equation}\label{eq:PairingOnS}
 \langle \varphi_1\cdot ... \cdot \varphi_k, x_1\cdot... \cdot x_k \rangle := \sum_{\sigma \in S_k} \varphi_1(x_{\sigma(1)})\cdot...\cdot\varphi_k(x_{\sigma(k)}),
 \end{equation}
 where the sum is over all permutations of the $k$ indices. Monomials pair as zero with any monomial of a different degree, and the pairing is then extended bilinearly.

 Alternatively, given a basis $\{b_1,...,b_m\}$ of $\g$ and dual basis $\{b_1^*,...,b_m^*\}$ of $\g^*$, $S\g$ and $S\g^*$ are spanned linearly by monomials in the basis
 elements $b_i$ and $b_j^*$ respectively. The monomial $(b_1^*)^{\alpha_1}\cdot...\cdot (b_m^*)^{\alpha_m}$ pairs as zero with every monomial in the basis vectors
 $\{b_i\}$ except $b_1^{\alpha_1}\cdot...\cdot b_m^{\alpha_m}$, and
 \begin{equation}\label{eq:basis}
\langle (b_1^*)^{\alpha_1}\cdot...\cdot (b_m^*)^{\alpha_m}, b_1^{\alpha_1}\cdot...\cdot b_m^{\alpha_m}\rangle=\prod_{i=1}^m\alpha_i!
\end{equation}

This descends to a pairing $(Sg^*)_\g\times (S\g)^\g\rightarrow \mathbb{K} $. For $\Pi \in (Sg^*)_\g$ and $P\in (S\g)^\g$ we will denote the value of this pairing by by $\langle\Pi,P\rangle$. Finally, one can extend to a pairing \linebreak $(Sg^*)_\g^{\otimes n} \times ((S\g)^\g)^{\otimes n}\rightarrow \mathbb{K}$ by simply multiplying the pairings of tensor factors. This satisfies the equality
\begin{equation}\label{eq:pairing}
\langle\Pi,PQ\rangle=\langle \Delta\Pi,P\otimes Q\rangle
\end{equation}
for any $P, Q \in (S\g)^\g$, where $\Delta$ is the co-product on $(Sg^*)_\g$ induced by the co-product on $Sg^*$.

Define the Duflo map $$\D: S(\g)^\g\rightarrow U(\g)^\g$$ by pairing with the first tensor factor of $\up\in ({S}(\g^*)_\g \otimes {U}(\g))^\wedge$, and by an abuse of notation, denote this by $\D(P)=\langle \up, P \rangle$. Note that although $\up$ lives in a degree completion, it is finite in each degree, and so only finitely many terms of $\up$ have non-zero pairings with any given $P\in S(\g)^\g$. Hence, $\D(P)\in U(\g)$. The fact that $\D(P)$ is $\g$-invariant is a direct consequence of Proposition~\ref{prop:invariance}.

\begin{thm}\label{thm:homom}
{\bf (The Duflo problem.)} The map $\D$ is an algebra homomorphism.
\end{thm}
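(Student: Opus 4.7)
The plan is to show that $\D$ is an algebra homomorphism by a short computation that packages together three ingredients already assembled in the paper: the pairing identity \eqref{eq:pairing} (which intertwines multiplication in $S\g$ with comultiplication in $S\g^*$), the Tensor Statement $\up^{13}\up^{23} = (\Delta \otimes 1)\up$ from Figure~\ref{fig:tensorstate}, and the fact that pairing on tensor factors splits over tensor products. Invariance of $\D(P)$ is already handled by Proposition~\ref{prop:invariance}, so only multiplicativity remains.

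Fix $P, Q \in S(\g)^\g$. First I would unwind the definition:
\[
\D(PQ) \;=\; \langle \up,\, PQ \rangle,
\]
where the pairing contracts the $\hat S(\g^*)_\g$ tensor factor of $\up$ with $PQ \in S(\g)^\g$, leaving an element of $\hat U(\g)$. Next, I would apply the pairing identity \eqref{eq:pairing} in the first tensor slot to obtain
\[
\D(PQ) \;=\; \langle (\Delta \otimes 1)\up,\, P \otimes Q \rangle,
\]
where now the pairing contracts the first two $\hat S(\g^*)_\g$ factors of $(\Delta\otimes 1)\up \in (\hat S(\g^*)_\g^{\otimes 2} \otimes \hat U(\g))$ with $P \otimes Q$.

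Then I would invoke the Tensor Statement of Figure~\ref{fig:tensorstate}, which rewrites the right-hand side as
\[
\D(PQ) \;=\; \langle \up^{13}\up^{23},\, P \otimes Q \rangle.
\]
By construction of the ${}^{13}$ and ${}^{23}$ insertions, the product $\up^{13}\up^{23}$ has its first $\hat S(\g^*)_\g$ tensor factor coming entirely from the first copy of $\up$ and its second $\hat S(\g^*)_\g$ factor coming entirely from the second copy, while the $\hat U(\g)$ factor is the product (in $\hat U(\g)$) of the two $\hat U(\g)$ factors. Since the pairing on $\hat S(\g^*)_\g^{\otimes 2} \times ((S\g)^\g)^{\otimes 2}$ is defined as the product of the pairings on the individual tensor factors, the right-hand side factors as
\[
\langle \up, P \rangle \cdot \langle \up, Q \rangle \;=\; \D(P)\,\D(Q),
\]
with the product taken in $\hat U(\g)$, giving $\D(PQ) = \D(P)\D(Q)$ as required.

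The only point requiring care, and the main place where one should double-check the bookkeeping, is the last step: one must verify that the $\hat S(\g^*)_\g^{\otimes 2}$ pairing really does split as a product of single-slot pairings when applied to $\up^{13}\up^{23}$, i.e.\ that the $\hat S(\g^*)_\g$ parts of $\up^{13}$ and $\up^{23}$ sit in disjoint tensor slots and therefore do not interact under the product (they live in the abelian symmetric algebra on disjoint factors). This is immediate from the definitions of $\phi^{13}$ and $\phi^{23}$, so the argument is essentially a one-line chain of equalities once the Tensor Statement is in hand.
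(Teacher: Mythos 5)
Your proposal is correct and follows exactly the same chain of equalities as the paper's proof: $\D(PQ)=\langle \up, PQ\rangle = \langle (\Delta\otimes 1)\up, P\otimes Q\rangle$ by Equation~(\ref{eq:pairing}), then $=\langle \up^{13}\up^{23}, P\otimes Q\rangle$ by the Tensor Statement, then $=\D(P)\D(Q)$. The only difference is that you spell out the final factorization of the two-slot pairing, which the paper leaves implicit; that added detail is accurate.
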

\begin{proof}
After the work coomplished throughout the previous sections, this is now easy. By definition, $\D$ is linear. The multiplicativity of $\D$, on the other hand, follows directly from the Tensor Statement. Let $P,Q\in S(\g)^\g$, then

\begin{equation*}\D(PQ)=\langle \up, PQ \rangle \stackrel{1}{=} \langle (\Delta \otimes 1)\up, P\otimes Q \rangle \stackrel{2}{=} \langle \up^{13}\up^{23}, P\otimes Q \rangle
{=}\D(P)\D(Q).
\end{equation*}

Here Equality~$1$ is Equation~(\ref{eq:pairing}) above, and the second pairing is applied in the first two tensor factors of $(\Delta \otimes 1)\up$. Equality~$2$ is the Tensor Statement, and the pairing is still applied in the first two tensor factors of the first argument.
\end{proof}

\begin{prop}\label{prop:isom}
The map $\D$ is an algebra isomorphism.
\end{prop}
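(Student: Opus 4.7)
The plan is to use the standard filtered-to-graded trick. By Theorem~\ref{thm:homom}, $\D$ is already an algebra homomorphism, so it suffices to prove bijectivity. I would equip $S(\g)^\g$ with its polynomial-degree filtration and $U(\g)^\g$ with the restriction of the PBW filtration, so that $\operatorname{gr} U(\g)^\g \cong S(\g)^\g$ canonically. The goal is to identify the associated graded map of $\D$ with the Poincar\'e--Birkhoff--Witt symmetrization $\sigma\colon (S\g)^\g \to (S\g)^\g$, a $\g$-equivariant vector-space isomorphism. Once this identification is in hand, an induction on filtration degree (both filtrations are exhaustive with finite-dimensional graded pieces) yields bijectivity of $\D$.

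The technical content is an explicit description of $\up = T(Z(\mathbbm 1))$. From Figure~\ref{fig:comps}, $Z(\mathbbm 1)$ splits as the two cap values $C$ on the capped strand, multiplied by the exponential arrow-structure $e^{a}$ linking tube to string. Applying $T$, and using that the identity tensor on a tube-to-string arrow becomes $\sum_i b_i^*\otimes b_i \in \g^*\otimes\g$, one obtains
\[
\up \;=\; \mathcal W \cdot \exp\!\Big(\sum_{i=1}^{m} b_i^* \otimes b_i\Big),
\]
where $\mathcal W := T(C^2) \in \hat S(\g^*)_\g$ lives entirely in the first tensor slot. Since $C = \exp(\sum_{n\geq 1} c_{2n} w_{2n})$ involves only wheels with at least two spokes, $\mathcal W = 1 + \mathcal W'$ with $\mathcal W'$ of $S(\g^*)$-degree $\geq 2$.

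For $P \in (S\g)^\g$ of degree $n$, the pairing $\D(P) = \langle \up, P\rangle$ retains only the degree-$n$ component (in the $S(\g^*)$ slot) of $\up$. The ``$1$-piece'' of $\mathcal W$ contributes $\big\langle \tfrac{1}{n!}(\sum_i b_i^*\otimes b_i)^n, P \big\rangle$, which by a direct calculation using~\eqref{eq:PairingOnS} (telescoping $\sum_i b_i^*(x)\,b_i = x$ repeatedly) equals the PBW symmetrization $\sigma(P) \in U(\g)^\g$, of PBW degree $n$. Every term involving $\mathcal W'$ consumes at least two $\g^*$-factors from the first slot, so it pairs against at most $\tfrac{1}{k!}\id^k$ with $k \leq n-2$, and thus contributes to $U(\g)$ of PBW degree $\leq n-2$. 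Hence $\D(P) = \sigma(P) + (\text{strictly lower PBW order})$, so $\operatorname{gr} \D = \sigma$.

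The main obstacle is really just pinning down the factorization $\up = \mathcal W \cdot \exp(\id)$ and verifying the degree bookkeeping for the wheel corrections; in particular, one must check that $T(C^2)$ contributes only in the $S(\g^*)$ slot, and that higher-degree wheel corrections necessarily produce strictly lower PBW degree in $U(\g)$. Once that is in place, the passage from ``$\operatorname{gr}\D$ is an isomorphism'' to ``$\D$ is an isomorphism'' is entirely routine. Combined with Theorem~\ref{thm:homom}, this completes the proof that $\D$ is an algebra isomorphism.
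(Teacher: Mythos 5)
Your proposal is correct and follows essentially the same route as the paper: reduce to bijectivity via Theorem~\ref{thm:homom}, factor $\up = e^{\iota}\cdot(T(C^2)\otimes 1)$ with $T(C^2)=1+(\text{degree}\geq 2)$, observe that $\D$ is filtered with top-order term given by pairing against $\iota^d/d!$ (the PBW symmetrization), and conclude that $\operatorname{gr}\D$ is the identity on $S\g$. Your extra bookkeeping (wheels have at least two spokes, so corrections drop the PBW degree by at least two) is a slightly sharper version of the paper's ``higher degree terms'' remark, but the argument is the same.
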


\begin{proof}
In light of Theorem~\ref{thm:homom} we only need to prove that $\D$ is bijective. Recall that $U\g$ is filtered by word length (aka the PBW filtration), and the PBW Theorem states that the associated graded algebra is isomorphic to $S\g$. We claim that $\D$ is a filtered map; this is true by inspection of $\up$ as below. We then prove that $\operatorname{gr}\D$ is the identity, hence $\D$ is bijective.

Recall that $Z(\!\!\raisebox{-1.2mm}{\threadedsphere}\!\!)$ consists of an exponential $e^a$ of an arrow $a$ from the capped startand to the red string, followed by a value $C^2$ on the twice-capped strand, as shown in Figure~\ref{fig:comps}. Hence, $$T(Z(\!\!\raisebox{-1.2mm}{\threadedsphere}\!\!))= T(e^a) \cdot (T(C^2)\otimes 1).$$
Since $T(a)= \iota$, where $\iota \in \g^*\otimes \g$ denotes the structure tensor of the identity morphism of $\g$, we have $T(e^a)=e^\iota.$
Recall from Formula (\ref{eq:C}) that $T(C^2)= 1 + \text{higher degree terms}$.

Now let $P\in S(\g)^\g$ be homogeneous of degree $d$. Observe that $\D(P)$ lives in filtered degree {\em at most} $d$, hence $\D$ is a filtered map.  In particular the only term of $\D(P)$ that doesn't belong to filtered degree $d-1$ arises from pairing with
$ \frac{1}{d!}\iota^d$.

Hence, the associated graded map $\operatorname{gr}\D(P)=\operatorname{gr}(P\mapsto\langle e^\iota, P \rangle)$, and by formula (\ref{eq:PairingOnS}) this is the identity of $S\g$, completing the proof.
\end{proof}

\subsection{The explicit formula} It remains to derive the explicit formula for the Duflo map. The starting point for this is the value $T\left(Z(\raisebox{-1.5mm}{\threadedsphere})\right)=e^\iota T(C^2)$, where $\iota$ is the identity tensor in $\g^*\otimes\g$. As discussed above as part of the proof of Proposition~\ref{prop:isom}, for any $P\in S\g$, the term of $\D(P)$ of highest filtered order arises from pairing with $e^\iota$.
We begin by understanding the map $\mathcal S$ given by this pairing, that is,
$ \mathcal S: S\g \to U\g $, $\mathcal S(P)=\langle e^\iota,P \rangle$.

Since $\mathcal S$ is linear, it is enough to consider the case where $P$ is a monomial of degree $d$ in the basis elements $b_i$, say $b_{i_1}b_{i_2}...b_{i_d}$, where $i_1\leq i_2\leq i_d$. The only term of $e^\iota$ that $P$ pairs with, then, is $\iota^d/d!$. Given that $\langle \iota, b_i \rangle=b_i$, then by the formula~(\ref{eq:PairingOnS}),
$$\left\langle\frac{\iota^d}{d!}, b_{i_1}b_{i_2}...b_{i_d} \right\rangle= \frac{1}{d!}\sum_{\sigma \in S_d} b_{\sigma(i_1)}...b_{\sigma(i_d)}. $$ In other words, $\mathcal S$ is simply the PBW symmetrization map.

Next, we need to understand what happens when $P$ is paired with the term $\frac{\iota^{d-r}}{(d-r)!} \cdot \delta_r$, where $\delta_r\in S\g^*$ is the degree $r$ term of $T(C^2)$, and the multiplication occurs in the first tensor factor. Although $T(C^2)$ is concentrated in even degrees, we compute $\langle \frac{\iota^{d-1}}{(d-1)!} \cdot b_i^*, P \rangle $ as an instructive example. Applying formula~(\ref{eq:PairingOnS}) in this case, we sum over pairing any one factor of $P$ with $b_i^*$, this factor disappears, and the map $\mathcal S$ is applied to the rest. This has the same effect as differentiating $P$ with respect to $b_i$ (denote this differential operator by $\partial_i$), then applying $\mathcal S$. In other words,
$$\left\langle \frac{\iota^{d-1}}{(d-1)!} \cdot b_i^*, P \right\rangle = \mathcal S (\partial_i P).$$

This conclusion extends easily to the case of pairing with  $\iota^{d-r}/(d-r)! \cdot \delta_r$. Namely, $\delta_r \in S\g^*$ can be written uniquely as a polynomial in the $b_i^*$. Let $D_r$ denote the differential operator on $S\g$ given by replacing each occurrence in $\delta_r$ of $b_i^*$  by $\partial_i$, for all i. Then,
$$\left\langle \frac{\iota^{d-r}}{(d-r)!} \cdot \delta_r, P \right\rangle= \mathcal{S} (D_r(P)).$$

Now recall that from Equation~\ref{eq:C},
$$C^2= \operatorname{exp}\left(\sum_{n=1}^\infty 2c_{2n}w_{2n}\right), \quad \text{ where } \quad
\sum_{n=1}^\infty c_{2n}x^{2n}=\frac{1}{4} \operatorname{log}\frac{\operatorname{sinh}{x/2}}{x/2}.$$
Here $w_{2n}$ denotes the wheel with $2n$ spokes. In Figure~\ref{ExampleT} we computed $T$ for a wheel with three spokes, and the result clearly generalises to any number of spokes. It remains to interpret this result in a more useful form. The value $T(w_{2n})$ lives in $S\g^*$, and elements of $S\g^*$ can be viewed as polynomial functions on $\g$. Brief inspection of $T(w_{2n})$ leads to the conclusion that for any $x\in \g$, the function $T(w_{2n})$ it is given by
$$T(w_{2n})(x)= \operatorname{Tr}(\operatorname{ad}_x)^{2n},$$
where ``$\operatorname{Tr}$'' denotes the trace and ``$\operatorname{ad}$'' stands for the adjoint representation.

Hence, $$T(C^2)= \operatorname{exp}\left(\sum_{n=1}^\infty 2c_{2n}\operatorname{Tr}(\operatorname{ad}_x)^{2n}\right)= \left(\frac{ \operatorname{sinh} \left(\frac{\operatorname{Tr} \operatorname{ad}_x}{2}\right)}{\frac{x}{2}}\right) ^{1/2}= \operatorname{det}^{1/2}\left( \frac{\operatorname{sinh}\left( \frac{\operatorname{ad}_x}{2}\right)}{\frac{ad_x}{2}} \right).$$
This agrees with the well-known formula for the Duflo isomorphism, see for example \cite{Duflo1}, or \cite[Section 1.1]{BLT}.


\begin{thebibliography}{DHR}


\bibitem[AM]{AM} A.~Alekseev and
  E.~Meinrenken,
  {\em On the Kashiwara-Vergne conjecture,}
   Inventiones Mathematicae, {\bf 164} (2006) 615--634, \arXiv{math/0506499}.

\bibitem[AET]{AET} A.~Alekseev,
  B.~Enriquez, and C.~Torossian,
   {\em Drinfeld's associators, braid groups and an explicit solution of
     the Kashiwara-Vergne equations,}
     Publications Math\'ematiques de L'IH\'ES, {\bf 112-1} (2010) 143--189,
     \arXiv{0903.4067}.

% [AT] in abstract:
\bibitem[AT]{AT} A.~Alekseev and
  C.~Torossian,
  {\em The Kashiwara-Vergne conjecture and Drinfeld's associators,}
   Annals of Mathematics {\bf 175} (2012) 415--463, \arXiv{0802.4300}.


\bibitem[BN1]{BN1} D.~Bar-Natan
{\em On the Vassiliev knot invariants} Topology {\bf 34} (1995) 423--472.

%\backrefparscanfalse\renewcommand*\backref[1]{Page(s) #1.}%
%\renewcommand*\backrefalt[4]{\ifcase #1\or In page #2.\else In pages #2.\fi}

%\bibitem[BN]{KBH} D.~Bar-Natan,
 % {\em
  %  \href{http://www.math.toronto.edu/~drorbn/papers/KBH/}{Balloons and
   %   Hoops and their Universal Finite Type Invariant, BF
    %  Theory, and an Ultimate Alexander Invariant,}
  %}
  %Acta Mathematica Vietnamica {\bf 40-2} (2015) 271--329, \arXiv{1308.1721}.


% WKO2 in abstract:
\bibitem[BD1]{WKO1}D.~Bar-Natan and Z.~Dancso,
  {\em Finite Type Invariants of W-Knotted Objects I: W-Knots and
the Alexander Polynomial,}
Alg.\ Geom.\ Topol.\ {\bf 16} (2016) 1063--1133, \arXiv{1405.1956}.

\bibitem[BD2]{WKO2} D.~Bar-Natan and Z.~Dancso,
  {\em Finite Type Invariants of W-Knotted Objects II: Tangles and
  the Kashiwara-Vergne Problem,}
  Math.\ Annalen {\bf 367}(3-4) (2017) 1517--1586,
  \arXiv{1405.1955}.

  \bibitem[BD3]{WKO3}
  {\em Finite Type Invariants of W-Knotted Objects III: the Double Tree Construction,}
  draft available on request.

  \bibitem[BLT]{BLT} D.~Bar-Natan,
  T.~Q.~T.~Le, and D.~P.~Thurston,
  {\em Two applications of elementary knot theory to Lie algebras and
    Vassiliev invariants,}
  Geom.\ Topol.\ {\bf 7-1} (2003) 1--31, \arXiv{math.QA/0204311}.

\bibitem[BH]{BH}
T.~Brendle and A.~Hatcher, Configuration Spaces of Rings and Wickets, Comment.\ Math.\ Helv.\ {\bf 88} (2013), no.\ 1, 131--162, \arXiv{0805.4354}.

\bibitem [CS]{CS}
J.\  S.\ Carter and M.\ Saito, {\em Knotted surfaces and their diagrams,} Mathematical Surveys and Monographs {\bf 55},
American Mathematical Society, Providence 1998.

\bibitem[DHR]{DHR}
Z.~Dancso, I.~Halacheva, M.~Robertson,
Modular operads and circuit algebras, {\em in preparation}.

\bibitem[D1]{Duflo1}
M.\ Duflo, {\em Charact\`{e}res des groupes et des alg\`{e}bres de Lie r\'{e}solubles,} Ann.\ Scient.\  \'{E}c.\ Norm.\ Sup.\  {\bf 4-3} (1970) 23--74.

  \bibitem[D2]{Duflo}
  M.~Duflo, {\em Op\'erateurs diff\'erentiels bi-invariants sur un groupe de Lie},
  Ann.\ Sci.\ \'Ecole Norm.\ Sup.\ {\bf 10} (1977), 265--225.

  \bibitem[Gol]{Gol}
  D.~M.~Goldsmith, {\em The Theory of Motion Groups,} Mich.\  Math.\  J.\ {\bf 28-1} (1981) 3-17.

  \bibitem[HC]{HC}
  Harish-Chandra, {\em On Some Applications of the Universal Enveloping Algebra of a Semisimple Lie Algebra}
  Trans.\ Amer.\ Math.\ Soc.\ {\bf 70} (1951), 28--96.

  \bibitem[J]{Jones}
  V.\ F.\ R.\ Jones,
  {\em Planar algebras I}, \arXiv{math/9909027}.


\bibitem[KV]{KV} M.~Kashiwara and M.~Vergne,
  {\em The
    Campbell-Hausdorff Formula and Invariant Hyperfunctions,}
  Invent.\ Math.\ {\bf 47} (1978) 249--272.


\bibitem[Sa]{Satoh}
S.~Satoh, {\em Virtual Knot Presentations of Ribbon Torus Knots}, J.\ Knot Theory Ramifications {\bf 9-4} (2000) 531--542.


% BLT in abstract:


\end{thebibliography}
\end{document}